\documentclass[12pt]{amsart}
\usepackage{amsmath,amscd,amsthm,amsfonts, amssymb,amsxtra, mathrsfs,enumerate,relsize}
\usepackage{calrsfs}
\usepackage[us,12hr]{datetime}
\usepackage[all]{xy} \SelectTips{cm}{}
\usepackage{hyperref}
 \hypersetup{colorlinks=true,citecolor=blue}
   \usepackage{graphicx}
     \usepackage[margin=1.5in]{geometry}
   \usepackage[font=scriptsize]{caption}
\usepackage{todonotes}
\usepackage{tikz}
\usetikzlibrary{positioning}
\CDat
\usepackage{scalerel}[2016/12/29]

\subjclass{Primary: 57R65, 57R67, 57P10. Secondary: 57N65, 55P91, 55Q91}
\dedicatory{In memory of Friedhelm Waldhausen.}
\newtheorem{thm}{Theorem}[section]  
\newtheorem*{un-no-thm}{Theorem}
\newtheorem{cor}[thm]{Corollary}     
\newtheorem{lem}[thm]{Lemma}         
\newtheorem{prop}[thm]{Proposition}

\newtheorem{bigthm}{Theorem}

\theoremstyle{definition}
\newtheorem{defn}[thm]{Definition}   

\theoremstyle{definition}

\theoremstyle{definition}

\theoremstyle{remark}
\newtheorem{rem}[thm]{Remark}        

\newtheorem*{acks}{Acknowledgements}
\newtheorem*{out}{Outline}
\newtheorem{rems}[thm]{Remarks}
 
\newtheorem{ex}[thm]{Example}

\DeclareMathOperator{\Top}{Top}
\DeclareMathOperator{\Sp}{Spectra}

\DeclareMathOperator{\tr}{tr}

\begin{document}
\title{Non-simply connected Poincaré surgery}
\date{\today}
\author{John R.\ Klein}
\address{Wayne State University, Detroit, MI 48202}
\email{klein@math.wayne.edu}
\begin{abstract}We give homotopy-theoretic proofs of the principal
results of non-simply connected Poincar\'e surgery.
These include the Fundamental Theorem of Poincaré surgery, the Poincaré transversality exact sequence, and the construction of Poincaré handle structures. All results apply in dimensions~$\ge 8$. \end{abstract}
\maketitle
\setlength{\parindent}{15pt}
\setlength{\parskip}{1pt plus 0pt minus 1pt}

\def\Sp{\text{\bf Sp}}
\def\vo{\varOmega}
\def\vs{\varSigma}
\def\smsh{\wedge}
\def\flush{\flushpar}
\def\id{\text{id}}
\def\dbslash{/\!\! /}
\def\codim{\text{\rm codim\,}}
\def\:{\colon}
\def\holim{\text{holim\,}}
\def\hocolim{\text{hocolim\,}}
\def\Bbb{\mathbb}
\def\bold{\mathbf}
\def\Aut{\text{\rm Aut}}
\def\cal{\mathcal}
\def\sec{\text{\rm sec}}
\def\gda{G\text{\rm -}\delta\text{\rm -}\alpha}
\def\PDD{\text{\rm pd\,}}
\def\PD{\text{\rm P}}
\def\stableto {\,\, \mapstochar \!\!\to}

\setcounter{tocdepth}{1}
\tableofcontents


\section{Introduction}
This paper continues the work begun in \cite{klein_surgery1}. In that earlier paper, we established several of the principal results of Poincaré surgery in the simply connected case. Here, we extend those results to the non-simply connected setting. The methods developed in this work exemplify Browder’s maxim that “a homotopy theoretical problem should have a homotopy theoretical solution” \cite{Quinn_mimeo}. 
The current approach differs considerably from \cite{klein_surgery1} (cf.~
\S\ref{subsec:approach} below).
For additional historical context, we refer the reader to \cite[intro.]{HV} and \cite[\S1.3]{klein_surgery1}.

The results of this paper, together with those of
\cite{klein_surgery1}, complete a homotopy-theoretic program
for Poincar\'e surgery that goes back more than fifty years.
In 1972, Quinn announced a homotopical approach to Poincar\'e
surgery \cite{Quinn}, but a crucial desuspension argument was subsequently
found to contain gaps, and the relevant statements are in fact
false.  Later attempts at the fundamental theorem made substantial use of manifold theory \cite{Jones}, \cite{Hodgson_surgery}, \cite{HV}.  Thus the
question remained whether the principal results of Poincar\'e
surgery could be proved intrinsically in the category of
Poincar\'e spaces.  The simply connected case was treated in
\cite{klein_surgery1}.  The present paper supplies the
non-simply connected case, and thereby gives a manifold-free
proof of the fundamental theorem for arbitrary fundamental
groups, together with corresponding results on Poincar\'e
transversality and Poincar\'e handle structures.

\subsection{The Fundamental Theorem} Let $X$ be a connected Poincar\'e duality space of dimension $d$. 
Then $X$ has a stable spherical fibration $\xi$, the
Spivak normal fibration, which is characterized up to fiber homotopy equivalence by the condition that there exists
a stable homotopy class $\alpha\in \pi_d(X^\xi)$, called the {\it normal invariant}, whose Hurewicz image $[X] \in H_d(X;L)$
is a fundamental class for $X$. Here
$X^\xi$ denotes the Thom spectrum of $\xi$ and $L$ is the orientation sheaf of $X$. The pair $(\xi,\alpha)$ is unique up to fiber homotopy equivalence
and determines the pair $(L,[X])$.

A Poincar\'e {\it normal map} 
\[
(f,b)\: M \to X
\]
consists of a degree one map of Poincar\'e duality  spaces $f\: M\to X$ of dimension $d$ together with a stable fiber homotopy equivalence
 $b\: \xi_M \simeq f^\ast \xi$, where $\xi_M$ is the 
Spivak normal fibration of $M$. 
Let 
\[
\cal N(X,\xi) 
\]
be the set of bordism classes of Poincar\'e normal maps with target 
$(X,\xi)$.
The surgery obstruction in this setting is well-defined and is a function
\[
\sigma\colon \cal N(X,\xi) \longrightarrow L_d(\pi,w),
\]
where $\pi=\pi_1(X)$, $w=w_1(\xi)$ is the orientation character, and
\[
L_d(\pi,w):=L_d^h(\pi,w)
\]
is Wall's $L$-group with the $h$-decoration \cite{Ranicki_exact}, \cite{Ranicki_ags}. We suppress the decoration
throughout.
If an element of $\cal N(X,\xi)$ admits a representative
$(f,b)$ in which $f\: M\to X$ is a homotopy equivalence,  then $\sigma(f,b) = 0$. The ``Fundamental Theorem'' of Poincar\'e surgery
asserts that the converse is true when $d \ge 8$:
 
 \begin{bigthm}[Fundamental Theorem] \label{bigthm:fund-thm} 
Assume $d\ge 8$. If $\sigma(f,b) = 0$, then $(f,b)$ is normally cobordant to a  homotopy equivalence.
  \end{bigthm}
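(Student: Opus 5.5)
The plan is to imitate Wall's surgery-below-the-middle-dimension programme, but carried out homotopy-theoretically, with the simply connected results of \cite{klein_surgery1} as the input over the universal cover. Work over the universal cover $\tilde X$ with $\pi=\pi_1(X)$, and treat the map $f\colon M\to X$ as a degree one normal map of $\pi$-equivariant Poincar\'e objects. The first step is to make $f$ highly connected. We first make $f$ an isomorphism on $\pi_1$; then we kill the relative homotopy groups $\pi_{j+1}(f)\cong H_{j+1}(\tilde f)$ as $\mathbb Z[\pi]$-modules for $j<\lfloor d/2\rfloor$. Each such elementary operation is realized by a normal cobordism. We attach a $(j+1)$-cell to $M\times I$ along $M\times 1$, extending the map to $X$ and the bundle data via $b$. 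We then need a Poincar\'e ``surgery'' that produces a new Poincar\'e space $M'$ with the cobordism $W$ being a Poincar\'e triad.

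Homotopically, this amounts to the following construction. We desuspend the normal invariant and split off a thickening of $S^j$ in $M$, i.e.\ a Poincar\'e embedding of $S^j$ with trivial normal data $S^{d-j-1}$. Then $M'$ is obtained by removing $S^j\times D^{d-j}$ and gluing in $D^{j+1}\times S^{d-j-1}$. The existence of such Poincar\'e embeddings in the metastable range $2j+1\le d$, with $d\ge 8$, and with the fundamental group present, is where the earlier work is used. I would prove the needed equivariant embedding statement by applying the unstable compression and embedding results of \cite{klein_surgery1} to the $\pi$-cover, or directly via a Blakers--Massey/Goodwillie-type excision estimate on the complement. The normal trivialization is read off from the stable bundle isomorphism $b$ through the Spivak uniqueness recalled above.

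After this step, $f$ is $\lfloor d/2\rfloor$-connected, and the surgery kernel $K=\pi_{k+1}(f)$ lies in the middle dimension. Duality (Poincar\'e duality for $M$ and $X$ with local coefficients) makes $K$ stably a f.g.\ free (after stabilizing, using the $h$-decoration) $\mathbb Z[\pi]$-module. It carries a nonsingular $(-1)^k$-quadratic form $(K,\lambda,\mu)$ when $d=2k$, and a formation when $d=2k+1$. The class of this form in $L_d(\pi,w)$ is $\sigma(f,b)$. The geometric definition of $\mu$, via self-intersections of Poincar\'e embedded spheres, must be replaced by a homotopy-theoretic one. For this I would use the equivariant stable Hopf invariant (the Thom--Pontryagin/quadratic construction on the normal invariant $\alpha$), which is exactly the obstruction to compressing a stable map $S^k\to M$ into a Poincar\'e embedding.

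If $\sigma=0$, then in the even case, after stabilizing by hyperbolic summands (realized by trivial surgeries on $S^{k-1}$), $K$ has a free lagrangian $L$ with basis $x_1,\dots,x_r$. We need to represent each $x_i$ by disjoint Poincar\'e embeddings of $S^k$ with trivial normal data. The vanishing of $\mu$ on $L$ kills the equivariant Hopf invariant obstruction. Surgery on them then yields a homotopy equivalence, by the standard Wall argument, which is purely algebraic once the surgeries exist. In the odd case we argue similarly using a formation representative, doing surgeries on a basis of one lagrangian after preliminary handle moves.

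The main obstacle is the middle-dimensional Poincar\'e embedding theorem with fundamental group. We must show that an element of $\pi_k(M)$ with vanishing equivariant quadratic self-intersection (Hopf) invariant in $\mathbb Z[\pi]/\{a-(-1)^k\bar a\}$, and with a prescribed stable normal trivialization, is represented by a Poincar\'e embedding $S^k\times D^k\subset M^{2k}$. This lies just outside the metastable range, where Whitney tricks are unavailable. My plan there is to construct the complement directly as a homotopy pushout and verify duality equivariantly. This reduces to a desuspension problem for a $\pi$-equivariant map of spectra, which is exactly where Quinn's approach failed, so the connectivity hypothesis $d\ge 8$ must be used carefully here. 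I would attack it with the equivariant Freudenthal/Hopf invariant exact sequence (EHP with $\mathbb Z_2\times\pi$ action), reducing the obstruction to the quadratic term $\mu$.
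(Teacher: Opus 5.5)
\textbf{Verdict: genuine gap.} Your proposal does not actually argue the odd-dimensional case, and in the even case it never makes the middle-dimensional embeddings disjoint.

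\textbf{Even case.} Your architecture here matches the paper's:
\begin{enumerate}
\item[(a)] Reduce to a $k$-connected map; the paper simply cites \cite[thm.~8.9]{klein_surgery1}.
\item[(b)] Define $\mu(x)$ as the equivariant stable Hopf invariant of the collapse $\tilde\varphi^!$ of a Poincar\'e immersion $S^k\times D^k\to M$. For $k\ge 4$ this is the complete obstruction to desuspending $\tilde\varphi^!$, hence, by \cite[thm.~E]{Klein_compression}, the complete obstruction to compressing the immersion.
\item[(c)] Stabilize by trivial surgeries on $(k-1)$-spheres to obtain a Lagrangian basis.
\end{enumerate}
So the step you single out as the main obstacle is, in the paper, essentially a citation.

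The gap is disjointness. $\mu(x_i)=0$ gives each $x_i$ an embedding \emph{separately}, but surgery on a Lagrangian basis needs \emph{disjoint} embeddings. With no ambient manifold, you cannot push one sphere off another, and you never use $\lambda(x_i,x_j)=0$. The paper handles this inductively (Proposition~\ref{prop:disjoint-embedding}, Lemma~\ref{lem:disjoint-embedding}):
\begin{enumerate}
\item[(a)] Let $C$ be the complement of the handles already embedded. The image of $x_r$ in $\pi_k(M,C)\cong\Lambda^{r-1}$ has coordinates $\lambda(x_i,x_r)$, so $x_r$ factors through $C$.
\item[(b)] Compare unstable and stable equivariant mapping spaces out of the cofibre sequence $\tilde W_+\to\tilde M_+\to\tilde C/\partial\tilde C$. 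This shows the collapse into $C$ desuspends, so the immersion compresses inside $C$.
\end{enumerate}
You might instead treat $\coprod_i S^k\times D^k$ all at once. The splitting $D_2(A\vee B)\simeq D_2A\vee D_2B\vee(A\wedge B)$ lets the Cartan formula exhibit the cross terms as $\lambda(x_i,x_j)$. That could work, but some such argument has to be supplied.

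\textbf{Odd case.} The phrase ``doing surgeries on a basis of one lagrangian after preliminary handle moves'' conceals exactly the points where the Poincar\'e category differs. The paper's introduction notes that Wall's odd-dimensional method does not adapt directly. You need the following.
\begin{enumerate}
\item[(i)] \emph{A formation in the first place.} The paper constructs a Heegaard splitting (Proposition~\ref{prop:Heegaard}). It first homotopes $f$ to a degree-one map of top-cell triads (Proposition~\ref{prop:normal-map-top-cell}, an obstruction-theory argument that uses degree one). It then Poincar\'e embeds $k$-handles into $M_0$ relative to standard attaching spheres in $D^{2k}$.
\item[(ii)] \emph{Algebraic input.} From $\sigma=0$, the kernel formation is stably a boundary. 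The stabilization must be realized geometrically, by unknotted zero generators. One then chooses a Lagrangian $L$ complementary to both $F$ and $G$.
\item[(iii)] \emph{Geometric realization of the isometry.} Write $L$ as the graph of $s=\psi-\epsilon\psi^\ast$; the isometry $T_{-s}$ carries $L$ to $F^\ast$, and it must be realized geometrically.
\end{enumerate}

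Step (iii) is the crux: there is no ambient manifold in which to slide handles or alter framings. The paper thickens paths representing the group elements of $\psi$ into Poincar\'e embedded $1$-handles. This enlarges $U$ to a smooth handlebody $N$, Poincar\'e embedded in $M$ with unchanged complement. It then performs Wall's framed linking changes inside $N$ (Lemma~\ref{lem:presentation-kernel}). Only after this does the resulting presentation have $\gamma_V=p^L_F|_G$ an isomorphism, so that the kernel sequence kills $K_\ast(M')$.

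As written, your proposal covers at most the even-dimensional half, and only once disjointness is supplied.
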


  \begin{rem} Compare \cite[cor.~1.4]{Quinn}, \cite[thm.~3.4, rem.~3.17, cor.~6.2]{Jones}, \cite[thm.~4.1]{Hodgson_surgery}, and
 \cite[thm.~5.1]{HV}. With the exception of the Bulletin announcement \cite{Quinn},\footnote{The announcement \cite{Quinn}, which seems to have
 been based on the unpublished manuscript \cite{Quinn_mimeo}, was later shown to contain an irreparable gap.} the other published works make essential use
 of smooth manifolds. The current paper provides a homotopy-theoretic approach.
  If $\pi$ is trivial, then $d \ge 7$ suffices by \cite[thm.~A]{klein_surgery1}.
  \end{rem}


\begin{bigthm}[Absolute Realization] \label{bigthm:sigma-onto} If $ d \ge 8$, then $\sigma$ is  onto.
\end{bigthm}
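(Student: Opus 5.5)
Given $x\in L_d(\pi,w)$, I will realize it by Wall's relative realization followed by an absolute re-glueing. First I will choose a Poincar\'e normal map $(f_0,b_0)\colon M_0\to X$ with $f_0$ a homotopy equivalence; the identity of $X$ with the tautological bundle data is such a map, and $\sigma(f_0,b_0)=0$. The aim is to change this representative inside $\cal N(X,\xi)$ by adding $x$.

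\emph{Step 1: relative realization.} I want a Poincar\'e normal bordism
\[
(F,B)\colon (W;\partial_0W,\partial_1W)\to (X\times[0,1];X\times 0,X\times 1)
\]
with the following properties. The end $\partial_0 W = X$ maps by the identity. The other end $\partial_1W\to X$ is a homotopy equivalence. The relative surgery obstruction of the bordism is $x$.

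In manifold theory this is Wall realization: one attaches $k$-handles ($d=2k$ or $2k+1$) to $X\times[0,1]$ along trivial spheres, and then attaches $(k+1)$-handles along a form or formation representing $x$. In the Poincar\'e category I will replace handles by Poincar\'e handle attachments, i.e.\ Poincar\'e thickenings $D^{j}\times D^{d+1-j}$ glued along embedded Poincar\'e subcomplexes of $X$. The attaching data would be obtained homotopy-theoretically, via the Poincar\'e embedding and handle machinery the paper develops (the ``Poincar\'e handle structures'' promised in the abstract, together with the techniques of \cite{klein_surgery1} extended to nontrivial $\pi$). The key point is that a quadratic form on $\Bbb Z[\pi]^n$, or a formation, can be realized by framed immersions of spheres of middle dimension into a collar $\partial\times I$. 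In the Poincar\'e setting these become stable maps $S^k\to X$ whose self-intersection data, computed via equivariant stable homotopy (the Hopf invariant and Ranicki's quadratic construction), realizes the form. Since $d\ge 8$, one has $k\ge 4$, so the stable range and the Poincar\'e embedding theorems (codimension $\ge 3$) should apply.

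\emph{Step 2: pass to the absolute class.} Let $M_1=\partial_1 W$, with normal map $(f_1,b_1)\colon M_1\to X$ obtained by restricting $(F,B)$. Gluing the bordism to the absolute object relates obstructions additively:
\[
\sigma(f_1,b_1)=\sigma(f_0,b_0)+x .
\]
This uses the Novikov additivity of $\sigma$ under Poincar\'e normal bordisms. However, $(f_1,b_1)$ is itself a homotopy equivalence, so this does not directly give a normal map with obstruction $x$ in $\cal N(X,\xi)$. The correct absolute statement is the following. Take the target of the bordism to be $X\times I$ and consider the normal map $(W,\partial W)\to(X\times I,\partial)$. This is a relative class, whose obstruction lies in $L_{d+1}$, not $L_d$. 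So for degree $d$ I will instead realize $x$ by a relative bordism on $X'$, where $X=X'\cup$(a collar), and reglue.

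\emph{Main obstacle.} The main obstacle is exactly this absolute step. Realization of all of $L_d(\pi,w)$ on a fixed closed $X$ is false for manifolds in general. For Poincar\'e spaces it should hold because of the extra flexibility that the Spivak fibration is not required to reduce. My plan is therefore to use the Poincar\'e transversality exact sequence: $\cal N(X,\xi)\cong[X,G/G]\ldots$ replaced by the sequence
\[
\Omega^{P}_d(X^\xi)\to \cal N(X,\xi)\xrightarrow{\sigma} L_d(\pi,w).
\]
I would then identify the cokernel of the assembly map with the obstruction to $\sigma$ being onto. Finally, I would show that the cokernel vanishes because Poincar\'e bordism $\Omega^P_\ast$ receives a map from the quadratic $L$-theory of the point that is split by the symmetric signature. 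This parallels \cite[thm.~A]{klein_surgery1}, extended to nontrivial $\pi$ via the covering-space construction on $\tilde X$. I expect verifying this splitting equivariantly over $\pi$ to be the hardest part, and I am unsure it goes through as stated.
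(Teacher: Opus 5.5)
\textbf{There is a genuine gap: the absolute step is left unspecified, and the fallback plan fails.} Your Steps 1 and 2 do not reach $L_d$. A normal bordism over $X\times I$ carries a relative obstruction in $L_{d+1}(\pi,w)$, not $L_d(\pi,w)$. Moreover, $\sigma$ is a normal bordism invariant of closed normal maps, so $\sigma(f_1,b_1)=\sigma(f_0,b_0)$; your additivity formula would therefore force $x=0$.

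Your remedy --- realize $x$ relatively on a codimension one piece $X'$ and reglue --- is the right idea, and it is what the paper does. But you leave unspecified exactly the part that carries the content. You need a $(d-1)$-dimensional Poincar\'e space $X'$ giving a codimension one splitting $X=\bar Y\cup_{X'}X_0$, with $\pi_1(X')\to\pi_1(X)$ an isomorphism compatible with $w$. Then a realization over $X'\times I$ has obstruction group $L_{(d-1)+1}(\pi,w)=L_d(\pi,w)$.

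The paper obtains $X'$ as follows. It Poincar\'e embeds a finite $2$-skeleton $Y\to X$ in codimension $d-2\ge 3$ \cite[thm.~A]{Klein_haef} and takes $X'=\partial\bar Y$. Since $\partial\bar Y\to\bar Y$ is $(d-3)$-connected, $\pi_1(\partial\bar Y)\cong\pi$.

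For $d=2k+1$, the paper applies Theorem~\ref{thm:Wall_realization} to the $2k$-dimensional space $\partial\bar Y$; this is where $k\ge 4$ is needed. This gives $F\colon W\to\partial\bar Y\times I$ which is the identity on $\partial_0W$, a homotopy equivalence on $\partial_1W$, and has relative obstruction $\tau$. It then sets $M=\bar Y\cup W\cup X_0\to X$ via $\mathrm{id}\cup F\cup\mathrm{id}$. For $d$ even it cites \cite[thm.~B]{klein_surgery1}.

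What makes this work in the Poincar\'e category, and fails for manifolds, is that $X_0$ may be glued to $\partial_1 W$ along a mere homotopy equivalence and the result is still a Poincar\'e space. It is not that the Spivak fibration need not reduce.

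\textbf{The fallback in your last paragraph should be dropped.} The displayed sequence $\Omega^P_d(X^\xi)\to\mathcal N(X,\xi)\to L_d(\pi,w)$ is not an available exact sequence. The transversality sequence instead relates $\Omega^P_d(X,\xi)$, $\pi_d(X^\xi)$ and $\mathcal F_{d-1}(X,\xi)$. A splitting of Poincar\'e bordism by a symmetric signature says nothing about whether $\sigma$ hits every element of $L_d(\pi,w)$ for nontrivial $\pi$.

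Worse, the argument would be circular. In this paper the isomorphism $\sigma'\colon\mathcal F_d(X,\xi)\cong L_d(\pi,w)$ of Theorem~\ref{bigthm:stronger-exactness} gets its surjectivity \emph{from} Theorem~\ref{bigthm:sigma-onto}, via $\sigma=\sigma'\circ t$. Also, \cite[thm.~A]{klein_surgery1} is the simply connected fundamental theorem, not a realization statement.

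\textbf{A smaller point on Step 1.} Stable maps with prescribed self-intersection data are not enough for the relative realization. It needs actual disjoint Poincar\'e embeddings of $S^k\times D^k$ representing a basis of the Lagrangian. The paper gets these from the vanishing of the equivariant Hopf invariant $\mu$ and of $\lambda$ (Proposition~\ref{prop:disjoint-embedding}).
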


\begin{rem} When $d$ is even, this was already shown to be the case in \cite[thm.~B]{klein_surgery1}.
\end{rem}

\subsection{Poincar\'e transversality}
Let $X$ be a connected space of finite type equipped with a stable spherical fibration $\xi$. Then 
the Poincar\'e bordism group $\Omega_d^{P}(X,\xi)$ is defined. Its generators
are maps $f\:P\to X$ such that $P$ is a Poincar\'e duality space and $f^\ast\xi$ is identified with the Spivak normal fibration.

Let  
\[
c\: \Omega_d^{P}(X,\xi) @>>> \pi_{d}(X^\xi)
\] denote the Poincar\'e duality space version of the
 Pontryagin-Thom homomorphism:
It assigns to $f\: P \to X$ the composition
\[
S^d @> \alpha >> P^{f^\ast \xi} @> f^\xi >> X^\xi \, ,
\]
where $\alpha$ is the stable normal invariant of the Spivak fibration and $f^\xi$ is the 
map of Thom spectra induced by $f$. In  contrast with the manifold case, 
the homomorphism $c$
need not be an isomorphism, reflecting the lack of a Thom transversality result in the Poincar\'e category.

In \cite{klein_surgery1} we constructed an exact sequence of abelian groups
\[
\cdots \to \cal F_d(X,\xi)  \to \Omega_d^{P}(X,\xi) @> c >> \pi_{d}(X^\xi) \to  \cal F_{d-1}(X,\xi) \to \cdots \, ,
\]
and a natural homomorphism 
\[
\sigma'\: \cal F_d(X,\xi) \to L_d(\pi,w)\, ,
\] where $\pi = \pi_1(X)$ and $w\: \pi \to \{\pm 1\}$ is the orientation character
of $\xi$. We proved that $\sigma'$ is an isomorphism when $\pi$ is trivial and $d \ge 7$.
In the case of non-trivial $\pi$ we have the following:

\begin{bigthm}[Poincar\'e Transversality Obstruction]
\label{bigthm:stronger-exactness} If  $d\ge 8$, then the  homomorphism $\sigma'$ is an isomorphism.
\end{bigthm}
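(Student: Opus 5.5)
We prove that $\sigma'\colon \cal F_d(X,\xi)\to L_d(\pi,w)$ is an isomorphism by showing separately that it is onto and one-to-one. Both halves rest on Theorems~\ref{bigthm:fund-thm} and \ref{bigthm:sigma-onto}, and the five lemma is used to pass between dimensions.

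\emph{The group $\cal F_d(X,\xi)$.} Recall from \cite{klein_surgery1} that $\cal F_d(X,\xi)$ is the bordism group of degree one Poincar\'e normal maps $(f,b)\colon (M,\partial M)\to (P,\partial P)$ over $(X,\xi)$ whose restriction to the boundary $\partial M\to\partial P$ is a homotopy equivalence. Equivalently, it is the relative term comparing Poincar\'e bordism with the homotopy of the Thom spectrum. Under this description $\sigma'$ assigns to such a map its rel boundary surgery obstruction in $L_d(\pi,w)$.

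\emph{Surjectivity.} Given $x\in L_d(\pi,w)$, first realize $x$ by a normal map whose target is a Poincar\'e space over $X$ with the correct fundamental group. I would use Theorem~\ref{bigthm:sigma-onto} in its relative, Wall-realization form. Take a closed Poincar\'e space $N$ of dimension $d-1$ mapping to $X$ with $\pi_1$-isomorphism; for instance, the boundary of a regular Poincar\'e thickening of the $2$- or $3$-skeleton works, since $d-1\ge 7$. Then do the homotopy-theoretic ``plumbing'' construction on $N\times[0,1]$ that realizes $x$. This produces a normal bordism $W\to N\times [0,1]$ whose bottom end is the identity and whose top end is a homotopy equivalence, with obstruction $x$. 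Composing with $N\to X$ gives an element of $\cal F_d(X,\xi)$ mapping to $x$. If the fibration must match $\xi$ exactly, replace $N$ by the sphere bundle of a thickening that classifies $\xi$.

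\emph{Injectivity.} Suppose $(f,b)\colon (M,\partial M)\to(P,\partial P)$ represents a class with $\sigma'(f,b)=0$. Apply the relative version of Theorem~\ref{bigthm:fund-thm}, rel boundary. Since $\partial f$ is already a homotopy equivalence and $d\ge 8$, $(f,b)$ is normally cobordant rel $\partial$ to a homotopy equivalence of pairs. A normal cobordism to a homotopy equivalence is exactly a null bordism in $\cal F_d$, because homotopy equivalences represent zero there. This step needs Theorem~\ref{bigthm:fund-thm} for pairs with a $\pi_1$-isomorphism on the boundary, or for arbitrary boundary fundamental group in the $\pi$--$\pi$ situation. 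I expect this relative version is proved alongside the absolute one, or deduced from it by gluing on a collar and doubling.

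\emph{Main obstacle.} The subtle point is to check that $\sigma'$, as defined in \cite{klein_surgery1} via the exact sequence, agrees with the rel boundary surgery obstruction. This requires that the boundary maps in the sequence correspond to restriction to $\partial$. I would check this on the level of the defining homotopy fiber, using naturality of the symmetric-signature construction of $\sigma$. If the direct relative argument is awkward, a fallback is available: $\cal F_*(X,\xi)$ is $4$-periodic, by crossing with $\mathbb{CP}^2$, and is compatible with $L_*$. One can then reduce to a fixed large dimension and apply the closed-case results there.
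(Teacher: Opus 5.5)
There is a genuine gap, and it starts in your first paragraph. In \cite[\S3.4]{klein_surgery1}, $\cal F_d(X,\xi)$ is not a bordism group of normal maps that are homotopy equivalences on the boundary. Its generators are triples $(P^d,\phi,\alpha)$: a \emph{closed} Poincar\'e space $\phi\colon P\to X$, together with a null-homotopy in $X^\xi$ of the image of its normal invariant. Your ``equivalently'' identifies this with a Wall-type group of rel-boundary normal maps. That identification is essentially the content of the theorem, since both groups are supposed to be $L_d(\pi,w)$, and you give no argument for it.

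Your injectivity step depends on this identification. You start from a representative $(f,b)\colon(M,\partial M)\to(P,\partial P)$, but an element $x\in\cal F_d(X,\xi)$ with $\sigma'(x)=0$ is given only as a triple $(P,\phi,\alpha)$. Nothing in your argument produces a normal map representing it. Your surjectivity sketch has the same problem: it needs a map from rel-boundary normal maps into $\cal F_d$ compatible with $\sigma'$, which you leave unverified.

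You also invoke a rel-boundary version of Theorem~\ref{bigthm:fund-thm}, which is not proved here. Doubling does not give it, because the normal cobordism that the closed theorem produces on the double need not be rel the boundary. The $\mathbb{CP}^2$-periodicity fallback is circular: proving that crossing with $\mathbb{CP}^2$ is an isomorphism on $\cal F_*$ already requires $\cal F_*\cong L_*$. Dimension is not the obstruction anyway.

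The missing idea is a way to represent every element of $\cal F_d$ by a normal map to a \emph{closed} Poincar\'e space, so that the closed Theorems~\ref{bigthm:fund-thm} and~\ref{bigthm:sigma-onto} apply directly. The paper does this in three steps:
\begin{enumerate}
\item Replace $X$ by a finite $2$-skeleton $Y$. This does not change $\cal F_d$ by \cite[cor.~7.3]{klein_surgery1}.
\item Take a $2$-connected Poincar\'e $(d+1)$-thickening $(\bar Y,\partial\bar Y)$ whose Spivak fibration is the pullback of $\xi$.
\item Use the map $t\colon\cal N(\partial\bar Y,\xi)\to\cal F_d(Y,\xi)$ of \cite{klein_surgery1}. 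It satisfies $\sigma'\circ t=\sigma$, which is exactly the compatibility you single out as the main obstacle. It is onto because $\dim Y\le2$ \cite[cor.~6.11]{klein_surgery1}.
\end{enumerate}
Surjectivity of $\sigma'$ then follows from Theorem~\ref{bigthm:sigma-onto} applied to $\partial\bar Y$. For injectivity, lift $x$ to a normal map $f\colon M\to\partial\bar Y$ with $\sigma(f)=0$. By Theorem~\ref{bigthm:fund-thm}, $f$ is normally cobordant to a homotopy equivalence $f'$. Then $x=t(f')$ is represented by the Poincar\'e pair $(\bar Y,\partial\bar Y)$, hence is zero by Remark~\ref{rem:trivial}. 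No relative fundamental theorem is needed.
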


\subsection{Handlebodies} Let $M$ be a Poincar\'e duality space of dimension $d$. 
A {\it handle structure} on $M$ consists of a homotopy equivalence $M \simeq M_d$ and a filtration
\[
\emptyset = M_{-1} \subset M_0 \subset \cdots \subset M_d
\]
in which $M_j$ is obtained from $M_{j-1}$ by attaching a finite number of $j$-handles
\[
(H,\partial_0 H) = (D^j \times D^{d-j},S^{j-1} \times D^{d-j})
\]  
to $\partial M_{j-1}$. More precisely, if $r$ is the number of $j$-handles, then $M_j$ is the result of
attaching $\amalg_{i=1}^r H$ to $M_{j-1}$  along 
a Poincar\'e embedding $\amalg_{i=1}^r \partial_0 H  \to \partial M_{j-1}$.

\begin{bigthm}[Poincar\'e Handlebodies] \label{bigthm:handlebody} Every Poincar\'e duality  space of dimension $d \ge 8$
admits a handle structure.
\end{bigthm}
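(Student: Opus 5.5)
The plan is to deduce Theorem~\ref{bigthm:handlebody} from the Fundamental Theorem (Theorem~\ref{bigthm:fund-thm}) and from a relative version of it for pairs. We build the handlebody one index at a time and correct the resulting normal map by surgery.

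\emph{Step 1 (start with a handlebody of the right homotopy type).} Let $M$ be a Poincar\'e duality space of dimension $d\ge 8$ with Spivak fibration $\xi$ and normal invariant $\alpha$. Choose a finite CW model $K$ of dimension at most $\lfloor d/2\rfloor$, or at least a finite complex $K$ with a $\lfloor d/2\rfloor$-connected map $K\to M$. Thicken $K$ cell by cell to a Poincar\'e pair $(N,\partial N)$ of dimension $d$. Each $j$-cell is attached along a map $S^{j-1}\to \partial N_{j-1}$. In the range $j\le d/2$, this map can be promoted to a Poincar\'e embedding of $S^{j-1}\times D^{d-j}$, using the stable normal data pulled back from $\xi$ together with the Poincar\'e embedding theorem in the metastable range. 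So $N$ carries a handle structure with handles of index $\le d/2$. Dually, turning $N$ upside down gives the handles of index $\ge d/2$.

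\emph{Step 2 (assemble a normal map).} Form $M' = N\cup_{\partial N} N'$, where $N'$ is a thickening of a complex modelling the "dual half" of $M$. Equivalently, write $M\simeq N\cup W$ with $W$ the Poincar\'e complement of $K$ in $M$; this complement exists by Poincar\'e embedding of $K$ in $M$ in codimension $\ge d/2$, which is valid since $d\ge 8$. The aim is to arrange that the complement side also has a handle structure, possibly after changing it by a normal cobordism rel $\partial N$. This produces a degree one normal map $(f,b)\colon P\to M$, where $P$ is a Poincar\'e space equipped with a handle structure and $f$ is an equivalence on $N$.

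\emph{Step 3 (kill the surgery obstruction).} The surgery obstruction $\sigma(f,b)\in L_d(\pi,w)$ need not vanish. To fix this, take a normal bordism realizing $-\sigma(f,b)$ by Wall realization on a collar of $\partial N$; Theorem~\ref{bigthm:sigma-onto} and its relative form provide this. Such a bordism is built from a handle structure by construction. After this change the obstruction is zero, and Theorem~\ref{bigthm:fund-thm} (relative version, rel $N$) gives a normal cobordism to a homotopy equivalence $P''\simeq M$.

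\emph{Step 4 (handles survive the surgery).} The surgeries are performed by attaching handles to $P\times I$ along Poincar\'e embeddings. The new top $P''$ therefore differs from $P$ by handle attachments and handle deletions in the middle dimensions. I would show that each elementary surgery transforms a handle structure into a handle structure: removing $S^k\times D^{d-k}$ and gluing in $D^{k+1}\times S^{d-k-1}$ amounts to adding a $(k+1)$-handle and a dual $(d-k)$-handle.

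\emph{Main obstacle.} I expect the hardest step to be the relative/complement step, namely showing that the complement $W$ admits a handle structure. Equivalently, one must check that every Poincar\'e embedding produced by the proof of the Fundamental Theorem is of handle type, i.e.\ has trivialized normal data of the form $S^{j-1}\times D^{d-j}$. My first attempt would be to rerun the proof of the Fundamental Theorem so that all surgeries are on framed spheres in the stable range, which is available because $d\ge 8$. This should guarantee that every embedding used has this form, so the handle structures are preserved throughout.
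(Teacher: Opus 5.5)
\textbf{There is a genuine gap.} It sits where you suspected, but it is more serious than a question about the shape of the embeddings.

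\emph{Step 2 constructs nothing.} To get a degree one normal map $P\to M$ that is the identity on $N$, you need two things: a Poincar\'e pair carrying a handle structure, and a degree one normal map of pairs from it to $(W,\partial N)$ that is an equivalence on the boundary. An abstract thickening of a complex modelling $W$ comes with no identification of its boundary with $\partial N$. The only way to obtain one is to build the handles inside $W$ relative to $\partial N$, which is the theorem itself.

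\emph{Step 4 is also unjustified, so surgery cannot repair this.} A handle structure is a filtration in which each stage is attached along a Poincar\'e embedding into the boundary of the previous stage. Surgery on a Poincar\'e embedded $S^k\times D^{d-k}\subset P$ that is not adapted to that filtration gives no filtration of $P''$. The phrase ``add a $(k+1)$-handle and a dual $(d-k)$-handle'' describes the trace $P\times I\cup h^{k+1}$, not its other end. In the Poincar\'e category there is no Morse theory or handle rearrangement to pass from one to the other. Theorem~\ref{bigthm:fund-thm} only gives a homotopy equivalence $P''\to M$ with no structure on $P''$, and the relative version rel $N$ you invoke is not proved in the paper.

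\emph{Step 1 fails for $d=2k$.} The compression result, Theorem~\ref{thm:Whitney}, needs $d\ge 2p+1$, so it does not produce the $k$-handles. ``Valid since $d\ge 8$'' is false in the middle dimension. The paper needs the separate result Theorem~\ref{thm:extension}, which replaces the ambient pair by an equivalent one and chooses the attaching embedding as part of its conclusion.

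\textbf{The missing idea is that no surgery is needed at all.} All handles can be embedded in $M$ itself, and the ``dual half'' is unobstructed.
\begin{enumerate}
\item Build Poincar\'e embedded stages $M_j\to M$ through index $k=\lfloor d/2\rfloor$, using Theorems~\ref{thm:Smale-Hirsch} and~\ref{thm:Whitney}, plus Theorem~\ref{thm:extension} at index $k$ when $d=2k$.
\item Duality and excision give, for the complement $C$ of $M_k$,
$H^q(C;\mathcal{B})\cong H_{d-q}(M,M_k;\mathcal{B}\otimes L)=0$ for $q>k$ (for $q\ge k$ when $d=2k$). So $C$ is homotopically at most $k$-dimensional (at most $(k-1)$-dimensional when $d=2k$).
\item Build a handlebody in $C$ up to one index below that bound. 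The relative chain complex of universal covers is then equivalent to a single finitely generated stably free module in the top degree. Adjoining trivial handles makes it free, and attaching handles along a basis gives a handlebody $W'\to C$ that is a homotopy equivalence.
\item The region $T$ between $\partial C$ and $\partial W'$ has both boundary inclusions homotopy equivalences. This follows from excision on one side, duality on the other, and the $\pi_1$-isomorphisms. Hence $T$ is absorbed by an equivalence of pairs $(M_k\cup T,\partial W')\simeq(M_k,\partial C)$.
\item Turning the filtration of $W'$ upside down supplies the handles of index $>d/2$.
\end{enumerate}
The only obstruction on this route is a projectivity class, which stabilization kills because $M$ is homotopy finite. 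Since no normal map ever appears, no $L$-theoretic obstruction arises.
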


\begin{rem} A handle structure is a special case of a {\it patch structure} in the sense of \cite{Jones}.
\end{rem}

\subsection{Remarks on the approach}  \label{subsec:approach}
With the exception of the existence of handlebody structures, the main results of this paper parallel those of \cite{klein_surgery1}. The homotopy-theoretic tools are largely similar, though their implementation here requires greater delicacy. On the homotopy theory side, we draw upon the Poincar\'e embedding results of \cite{Klein_haef}, \cite{Klein_haef2}, \cite{Klein_compression}, \cite{Klein_compression2} as well as the homotopical intersection theory developed in \cite{Klein-Williams_Intersection1}. Because we work in the presence of a fundamental group $\pi$, the naive stable homotopy theory of spaces with a $\pi$-action becomes an essential ingredient. In \cite{klein_surgery1}, the simply connected ``fundamental theorem'' was established by an argument paralleling that of Browder \cite{Browder}. In the present, non-simply connected setting, we instead follow Wall's approach \cite{Wall}, incorporating certain simplifications due to Ranicki \cite{Ranicki_ags}.

When the dimension is even, $d = 2k$, the situation is comparatively straightforward. We employ a homotopical version of the double point invariant 
$\mu$, defined on the middle-dimensional surgery kernel (cf.~ \cite{Klein_sphere} in the 1-connected case). In Wall's framework, the function $\mu$ 
serves as the obstruction to deforming a smooth immersion of a framed sphere into a smooth embedding via regular homotopy. For the intersection 
pairing $\lambda$, we use the methods of \cite{Klein-Williams_Intersection1}. The outcome is a kernel form $(K_k(M), \lambda, \mu)$, whose 
cobordism class represents the surgery obstruction in the category of Poincar\'e normal maps.

The odd-dimensional case $d = 2k + 1$ turns out to be  substantially more intricate. I was unable to directly adapt Wall's method to the Poincar\'e duality context, but a key insight emerged from studying \cite[ch.~12]{Ranicki_ags}. The central idea is to construct the kernel formation of a Poincar\'e normal map via a Heegaard decomposition. Achieving this requires translating Ranicki's construction into the Poincar\'e duality space setting. Such a  translation is  carried out in Appendix~\ref{sec:Heegaard-appendix}.

\begin{out} Section \ref{sec:prelim} provides some of the  background for reading the paper. Section \ref{sec:embeddings} recalls the definition
of codimension zero Poincar\'e embeddings and restates some of the known results in a relative setting. Section \ref{sec:normal-maps} reviews
the four variant notions of a Poincar\'e normal map that appear in \cite{klein_surgery1}. Section \ref{sec:forms} recalls the definition of forms and formations over $\Bbb Z[\pi]$; the material appearing here is recalled from 
\cite{Ranicki_ags}. Section \ref{sec:Hopf}
reviews the equivariant stable Hopf invariant, following \cite{klein2026stablehopf}. Section
 \ref{sec:kernels} introduces the kernel modules associated with a highly connected Poincar\'e normal map. Section \ref{sec:kernel-form}
 constructs the kernel form of a highly connected Poincar\'e normal map when $d= 2k$ is even; the construction relies on the material of section \ref{sec:Hopf}.
Section \ref{sec:Heegard-normal} provides the definition of the kernel formation of a Poincar\'e normal map when $d=2k+1$ is odd. The definition relies
on the existence of a Heegaard splitting, whose existence is established in Appendix \ref{sec:Heegaard-appendix}. Section \ref{sec:surgery-obstruction}
defines the surgery obstruction of a Poincar\'e normal map. Section \ref{sec:realization} proves a Wall realization result in odd dimensions
in the Poincar\'e category and uses it to deduce Theorem \ref{bigthm:sigma-onto}. Sections \ref{sec:d=2k} and \ref{sec:d=2k+1} contain the proof of Theorem 
\ref{bigthm:fund-thm}. Section \ref{sec:transversality} concerns the proof of Theorem \ref{bigthm:stronger-exactness}. Section \ref{sec:handlebody}
contains the proof of Theorem \ref{bigthm:handlebody}.

We include two appendices. Appendix \ref{sec:top-cell} constructs a top cell splitting of a Poincar\'e normal map. Appendix \ref{sec:Heegaard-appendix}
provides the construction of a Heegaard decomposition of a Poincar\'e normal map.
\end{out}

\begin{acks}
This paper is dedicated to the memory of Friedhelm Waldhausen,
who passed away on November 21, 2024. During the years 1992--1998,
I had the privilege of serving as his assistant at Bielefeld
University. Although neither his research nor mine was focused on
surgery theory during that period, many of the ideas and methods
used in this work can ultimately be traced to what I learned from
him. His death marked the loss not only of a remarkable
mathematician, but also of a gifted poet.

I am deeply indebted to the work and influence of Andrew Ranicki,
who passed away on February 21, 2018. My understanding of the
algebraic topology of manifolds was shaped to a considerable
extent by conversations with him during visits to Edinburgh and
Oberwolfach. I should also mention that the difficulties I had
encountered in understanding the passage from geometry to algebra in \cite[ch.~6]{Wall} were finally
resolved after I discovered Ranicki's book
\cite{Ranicki_ags}.
\end{acks}

\section{Preliminaries} \label{sec:prelim}
\subsection{Spaces}
Let $\Top$ denote the category of compactly generated weak Hausdorff spaces, equipped with
the Quillen model structure in which a map $X\to Y$ is a weak equivalence (resp.~fibration)
if it is a weak homotopy equivalence (resp.~Serre fibration) \cite[thm.~2.4.25]{Hovey}. 
A map $X\to Y$ is a cofibration if it is the result of attaching cells to $X$, or if it is the retract of such a map.
We will implicitly work with cofibrant spaces and apply cofibrant approximation whenever necessary to guarantee
cofibrancy. Similarly, let $\Top_\ast$ denote the model category of based spaces.

A non-empty space $X$ 
is {\it $0$-connected} if it is path connected.
It is $r$-connected with  $r > 0$, if it is $0$-connected and the set of homotopy classes
$[S^k,X]$ has one element for $k\le r$.
A map of non-empty (possibly unbased) spaces is
{\it $r$-connected} if its homotopy fiber, taken with respect to
all choices of basepoint, is $(r{-}1)$-connected. Let $I = [0,1]$ denote the unit interval.

A commutative square of spaces
\[
\xymatrix{
A \ar[r] \ar[d] & C \ar[d] \\
B \ar[r] & D
}
\] 
is a {\it homotopy pushout} if the induced map
\[
B\times \{0\} \cup A \times I \cup C\times \{1\} \to D
\]
is a weak equivalence, where the source is the double mapping cylinder of the diagram $B \leftarrow A \to C$.

A {\it triad} of spaces is an ordered triple 
\[
(X;X_0,X_1)
\] in which $X_i \to X$ are cofibrations.
We sometimes let $X_{01} := X_0 \times_X X_1$ denote the intersection of $X_0$ with $X_1$ in $X$.

A space $X$ is {\it homotopy finite} if it has the homotopy type of a finite CW complex.
It is of {\it finite type} if it has the homotopy type of a CW complex having finitely many cells in each dimension.
A {\it $k$-skeleton} for a space $X$  is a $k$-connected map of spaces $Y\to X$ in which $Y$ is a CW complex of dimension $\le k$.

A {\it stable map} $X \to  Y$ of based spaces is a map of suspension spectra $\Sigma^\infty X\to \Sigma^\infty Y$.
The abelian group of homotopy classes of stable maps $X\to Y$ is denoted by $\{X,Y\}$.
When $X$ is a finite complex,  a stable map is represented by a map of based spaces
 $\Sigma^j X \to \Sigma^j Y$ for some $j \ge 0$, where $\Sigma^j X$ is the reduced $j$-fold suspension of $X$. 
We identify two such maps if they agree after some iterated suspension.

\subsection{Poincar\'e duality} 
A  space $P$ is a {\it Poincar\'e duality space} 
of  dimension $d$ if there exists a pair
\[
(L,[P])
\]
in which $L$ is a  bundle  of coefficients that is locally isomorphic
to $\Bbb Z$, and  $[P] \in H_d(P;L )$ is a homology class
such that the associated cap product homomorphism
\[
\cap [P]\:H^*(P;\cal B) \to H_{d{-}*}(P;\cal B \otimes L )
\]
is an isomorphism in all degrees for every local coefficient bundle $\cal B$ \cite{Wall_PD}.
If the pair $(L,[P])$ exists, then it
is defined up to unique isomorphism; $L$ is called
the {\it orientation sheaf} and $[P]$ the {\it fundamental class}.

Similarly, given a Poincar\'e space $\partial P$
of dimension $d-1$, one has the notion of {\it Poincar\'e pair} $(P,\partial P)$ of dimension $d$,
where now $[P] \in H_d(P,\partial P;L )$ induces an isomorphism
\[
\cap [P]\:H^*(P;\cal B) \to H_{d{-}*}(P,\partial P; \cal B\otimes L)\, 
\]
(cf.~\cite[thm.~B]{Klein-Qin-Su}).
In this paper, we will assume that Poincar\'e spaces (pairs)
are homotopy finite, i.e., they have the homotopy type of a finite CW complex
(resp.~finite CW pair).

\subsection{Poincar\'e triads} 
Let $X$ be a Poincar\'e duality space of dimension $d$. A {\it codimension one splitting} of $X$ is a triad
\[
(X;X_0,X_1)
\]
such that $(X_0,X_{01})$ and $(X_1,X_{01})$ are $d$-dimensional Poincar\'e pairs and
\[
X = X_0 \cup_{X_{01}} X_1\, .
\]
A {\it Poincar\'e triad} is a Poincar\'e pair $(P,\partial P)$ such that $\partial P$ is equipped with a codimension one splitting
\[
(\partial P;\partial_0 P,\partial_1 P)\, .
\]
We denote these data by the triad
\[
(P;\partial_0 P,\partial_1 P)\, .
\]
\begin{ex}[Top cell decompositions] Let $X$ be a connected Poincar\'e duality space of dimension $d$. A {\it top cell decomposition} of
$X$ consists of a  choice of homotopy equivalence
\[
X \simeq X_0 \cup_f D^{d}\, ,
\]
where $f\:S^{d-1}\to X_0$ is a cofibration and $(X_0,S^{d-1})$ is a Poincar\'e pair of dimension $d$. For $d \ge 4$, $X_0$ may be chosen to have the homotopy type of a CW complex of dimension at most $d-1$.
Consequently, a top cell decomposition of $X$ yields the codimension one splitting
\[
(Y;X_0,D^d) \, ,
\]
where $Y = X_0 \cup_f D^d$.
Every connected Poincar\'e space of dimension $d \ge 3$ admits a top cell decomposition. Moreover, the decomposition
is unique up to weak equivalence of triads \cite[thm.~2.4]{Wall_PD}.
\end{ex}

\subsection{Connected sums}
Let $(P^d;P_0,D^d)$ and $(Q^d;Q_0,D^d)$ be codimension one splittings. 
The  {\it connected sum} is the $d$-dimensional Poincar\'e duality space
\[
P \sharp Q := P_0 \cup_{S^{d-1}} Q_0 \, .
\]
 More generally, if $P^d$ and $Q^d$ are
Poincar\'e duality  spaces, we may choose top cell decompositions $P \simeq  P_0 \cup_f D^d$ and $Q \simeq  Q_0 \cup_g D^d$ and define
$P\sharp Q$ using the associated codimension one splittings. Alternatively, we may define $P \sharp Q$
as the double mapping cylinder
\[
P_0 \times\{0\} \cup_{f} (S^{d-1} \times I) \cup_g Q_0 \times \{1\}\, .
\]
The connected sum is well-defined up to homotopy.

If $P$ and $Q$ aren't connected, then to define a connected sum we must equip the sets  $\pi_0(P)$ and $\pi_0(Q)$ with basepoints, 
and we may take the connected sum at the preferred basepoint components and the disjoint union 
with the remaining components to define $P\sharp Q$.

If $(P,\partial P)$ and $(Q,\partial Q)$ are $d$-dimensional Poincar\'e pairs equipped with codimension one splittings
$(\partial P; \partial_0 P,D^{d-1})$
and $(\partial Q;\partial_0 Q, D^{d-1})$, then  the {\it boundary connected sum} is the Poincar\'e pair
\[
(P \natural Q,\partial (P \natural Q)) =  (P \cup_{D^{d-1}} Q,\partial P \sharp \partial Q)\, .
\]

 \subsection{Stable spherical fibrations}
Let $BG$ be the classifying space of stable spherical fibrations \cite{Stasheff_BG}. 
This is the classifying space of the grouplike topological monoid $G$ consisting of
stable self equivalences of the zero sphere $S^0$.

By a {\it stable spherical fibration} over a space $X$, we mean a map
$\xi\: X\to BG$. An orientation sheaf $L$ on $X$ associated with $\xi$ is defined by 
the first Stiefel-Whitney class $w = w_1(\xi) \in H^1(X,\Bbb Z_2)$; the 
latter is the pullback of the generator of  $H^1(BG;\Bbb Z_2) \cong \Bbb Z_2$.
Note that  $w$ may be regarded as a homomorphism $\pi\to \{\pm 1\}$
called the {\it orientation character}, where $\pi$ is the fundamental group of $X$.

To relate the above notion of stable spherical fibration to the classical one, 
let $p\: EG\to BG$ be a universal classifying fibration in which $EG$ is a free contractible
$G$-space. Form the fiber product
 $\bar X = X \times_{BG} EG$; the latter is equipped with a free action of $G$. Let $S$ be the sphere spectrum. Then 
 $G$ acts on $S$ 
 and we obtain a parametrized spectrum
  \[
 \bar X\times_G S \to X
 \]
 whose fibers are spherical. The {\it Thom spectrum} of $\xi$ is
 \[
X^\xi := (\bar X_+) \smsh_{G} S\, .
 \]
Similarly, if $\tilde X \to X$ is the universal cover then we have a stable spherical fibration $\tilde X\to X \to BG$
and an associated  Thom spectrum with $\pi$-action $\tilde X^\xi$.
 
Then the {\it Thom class} of $\xi$
 \[
 U \in H^0(X^\xi ; L) := H^0_{\pi}(\tilde X^\xi; L)
 \]
induces the Thom isomorphism
 \[
 \cup U\: H^\ast(X;L) @>  \cong >> H^\ast(X^\xi;\Bbb Z ) \quad \text{ and } \quad
 \cap U\: H_\ast(X^\xi;\Bbb Z) @>  \cong >> H_\ast(X;L) \, .
\]

\subsection{The Spivak normal fibration} Let $X$ be a Poincar\'e duality space of dimension $d$. 
Let $\xi\: X \to BG$  be a  stable spherical fibration.
Then $\xi$ is  a {\it Spivak fibration} for $X$ if there exists a homotopy class $\alpha\in \pi_d(X^\xi)$
such that
\[
[X] := \alpha_\ast([S^d])\in H_{d}(X^\xi) \cong H_d(X; L)
\]
is a  fundamental class, where $[S^d] \in H_d(S^d;\Bbb Z)$ is the generator \cite{Spivak}, \cite{Browder_pd-surgery}, \cite{Klein_dualizing}. 
The homotopy class $\alpha$ is called the {\it normal invariant}.
The triple
\[
(X,\xi,\alpha)
\]
is called a {\it Spivak normal structure} \cite[def.~9.30]{Ranicki_ags}. It is unique up to fiber homotopy equivalence \cite[I.4.19]{Browder}, \cite{Klein_dualizing}.

If $(X, \partial X)$ is a Poincar\'e pair of dimension $d$, then one also has a Spivak normal structure
which can be defined by pulling back the Spivak fibration of the double $X \cup_{\partial X} X$
along the left inclusion $X \subset X\cup_{\partial X} X$. 
In this instance, the normal invariant $\alpha$ is a  homotopy class of pairs
$(D^d,S^{d-1}) \to  (X^\xi,(\partial X)^\xi)$ or equivalently, a homotopy class
\[
S^d \to X^\xi/(\partial X)^\xi\, .
\]

\section{Codimension zero Poincar\'e embeddings} \label{sec:embeddings}
Let $(P;\partial_0 P,\partial_1 P)$ and $(M;\partial_0 M,\partial_1 M)$ be Poincar\'e triads of dimension $d$
such that $(\partial_0 M,\partial_{01} M) = (\partial_0 P,\partial_{01} P)$.

\begin{defn}[{\cite[\S2]{GK}}]
A (codimension zero)  {\it Poincar\'e embedding} of $P$ in $M$ (relative to $\partial_0 P$) consists of a $d$-dimensional Poincar\'e triad
\[
(C; \partial_1 P, \partial_1 M)
\]
and a weak equivalence 
\[
h\: P\cup_{\partial_1 P} C @> \simeq >> M
\]
which restricts to the identity on $\partial M = \partial_0 P \cup \partial_1 M$. 

 The Poincar\'e triad $(C;\partial_1 P,\partial_1 M)$  is called the  {\it complement} of the Poincar\'e embedding.
 
 The composition
\[
f\: (P,\partial_0 P) @>\subset >> (P\cup_{\partial_1 P} C,\partial_0 P) @> h>\simeq > (M,\partial_0 P)
\]
is called the {\it underlying map} of the Poincar\'e embedding. 

Informally, we declare that a map $f\: P \to M$ (extending the identity
on $\partial_0 P$) {\it Poincar\'e embeds} if there is a Poincar\'e embedding of $P$ in $M$ (relative to $\partial_0 P$)
with underlying map $f$.
\end{defn}

\begin{rem} With respect to the identification $M \simeq P\cup_{\partial_1 P} C$, we obtain a codimension one splitting
of the Poincar\'e pair $(M,\partial M)$:
\[
((M,\partial M);(P,\partial_0 P),(C,\partial_1 M)) \, .
\]
\end{rem}

\begin{figure}
\centering
\begin{tikzpicture}[scale=.8]

\fill[blue!15] (-3,0) rectangle (3,6);
\draw[thick, black] (-3,0) rectangle (3,6);

\draw[thick, blue, domain=-2:2, samples=100] 
  plot (\x, {-(\x)^2 + 4});


\begin{scope}
\clip (-3,0) rectangle (3,6);
\fill[magenta!40, opacity=0.9] 
  plot[domain=2:-2, samples=100] (\x, {-(\x)^2 + 4}) 
  -- cycle;
\end{scope}

\draw[thick, blue] (-2,0) -- (2,0);

\node[font=\fontsize{8pt}{8pt}\selectfont] at (-1.2,3.2) {$\partial_1 \! P$};
\node[font=\fontsize{8pt}{8pt}\selectfont] at (0,.15) {$\partial_0 \! P$};
\node[font=\small] at (0,5) {$C$};
\node[font=\small] at (0,2) {$P$};
\end{tikzpicture}
\caption{A depiction of a Poincar\'e embedding of $P$ in $M$ relative to $\partial_0 P$, where $\partial_1 M$ is the 
 unlabelled part of the boundary.} 
\label{fig:embedding}
\end{figure}

\begin{rem} When $\partial_0 P = \emptyset$, the definition reduces to the one of \cite[\S2.4]{Klein_compression}.
In this instance we call it an {\it interior Poincar\'e embedding}.
\end{rem}

\begin{rem} Let $\iota\: \partial_1 P \cup \partial_1 M \to M$ be the inclusion. 
Then a Poincar\'e embedding defines an object of the category
$T_\iota$
whose objects are factorizations of  $\iota$ of the form $\partial_1 P \cup \partial_1 M \to C \to M$ and whose morphisms
are weak equivalences $C \to C'$ that are compatible with the factorizations. Let $E(P,M) \subset T_\iota$ be the full subcategory
consisting of objects which are Poincar\'e embeddings. Two Poincar\'e embeddings of $P$ in $M$ relative to $\partial_0 P$
are said to be {\it concordant} if they lie in the same path component of the realization $|E(P,M)|$.
\end{rem}

\subsection{Poincar\'e immersions} Let $P$ and $M$ be as above.
Let $j \ge 0$ be an integer. Then one has a Poincar\'e triad of dimension $d+j$:
\[
(P\times D^j; (\partial_0 P) \times D^j, S_P^j\partial_1 P)\, , 
\]
where $S_P^j\partial_1 P = ((\partial_1 P) \times D^j) \cup_{(\partial_1 P) \times S^{j-1}} P \times S^{j-1}$
is the $j$-fold unreduced fiberwise suspension of $\partial_1 P$ over $P$. Here we have implemented the Poincar\'e space decomposition
\begin{align*}
\partial (P\times D^j) &= ((\partial P) \times D^j)\cup (P \times S^{j-1})\\
&= ((\partial_0 P) \times D^j) \cup ((\partial_1 P) \times D^j) \cup (P \times S^{j-1})\\
&= ((\partial_0 P) \times D^j) \cup S_P^j\partial_1 P\, .
\end{align*}

\begin{defn}[{cf.~ \cite[\S1.4]{Klein_compression2}}] A {\it Poincar\'e immersion} of $P$ in $M$ is a Poincar\'e embedding
\[
P \times D^j \to M \times D^j
\]
(relative to $(\partial_0 P) \times D^j$) for some $j \ge 0$.

In particular, a Poincar\'e embedding is also a Poincar\'e immersion. One says that a Poincar\'e immersion of $P$ in $M$  {\it compresses}
 if, up to concordance, it arises from a Poincar\'e embedding of $P$ in $M$.
\end{defn}

Let $\nu_P$ denote the Spivak normal fibration of $(P;\partial P)$. 
The following result is  Theorem B of \cite{Klein_compression2}. 

\begin{thm} \label{thm:Smale-Hirsch}  Let $f\: (P,\partial_0 P) \to (M,\partial_0 P)$ be a map of pairs which restricts to the identity on $\partial_0 P$. Then 
$f$ underlies a Poincar\'e immersion if and only if there is a stable fiber homotopy equivalence $\nu_P \simeq f^\ast \nu_M $
which restricts to the identity on $\partial_0 P$.
\end{thm}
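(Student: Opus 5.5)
The statement is a Poincaré analogue of Smale–Hirsch immersion theory. The plan has two directions, with necessity easy and sufficiency carrying the content.

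\emph{Necessity.} Suppose $f$ underlies a Poincaré embedding $P\times D^j \to M\times D^j$ relative to $(\partial_0 P)\times D^j$, with complement $C$. Since $M\times D^j \simeq (P\times D^j)\cup C$ is a codimension one splitting, the Spivak normal fibration of $M\times D^j$ restricts to that of the piece $P\times D^j$. This uses the uniqueness of Spivak normal structures for Poincaré pairs and triads, applied to the normal invariant of the glued pair, which collapses onto the Thom spectrum of the piece. Stabilizing with respect to $j$ gives $\nu_P \simeq \nu_{P\times D^j} \simeq f^*\nu_{M\times D^j} \simeq f^*\nu_M$. Because the embedding is the identity on $(\partial_0 P)\times D^j$, this equivalence restricts to the identity there.

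\emph{Sufficiency.} Given a stable fiber homotopy equivalence $b\colon \nu_P\simeq f^*\nu_M$ rel $\partial_0 P$, I would produce a Poincaré embedding after crossing with a large disk $D^j$.

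1. For $j$ large relative to $d$, the pair $(M\times D^j, P\times D^j)$ is in the metastable (indeed stable) range. The obstructions to Poincaré embedding in that range are those of the relative embedding theorems recalled in this section (\cite{Klein_haef}, \cite{Klein_compression}). They reduce to a normal-data obstruction, because the homotopical self-intersection obstructions vanish once the codimension of the core exceeds roughly half the ambient dimension.

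2. To make this concrete, replace $f$ by a cofibration and build the candidate complement $C$ as the homotopy-theoretic complement of $P\times D^j$ in $M\times D^j$. Concretely, $C$ should be the fiberwise construction whose boundary piece $S^j_P\partial_1 P$ is glued to $\partial_1 M\times D^j\cup M\times S^{j-1}$, with the equivalence $b$ used to identify the sphere bundle of the normal data of $P$ with $f^*$ of the normal data of $M$.

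3. Verify Poincaré duality of the triad $C$ by a Thom-spectrum / Spanier–Whitehead duality computation. The normal invariant of $M\times D^j$ should split, via $b$, into pieces on $P\times D^j$ and on $C$. One then checks that the cap product is an isomorphism with arbitrary local coefficients, which must be done equivariantly over the universal cover because $\pi_1$ is nontrivial.

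The main obstacle is step 3. There one must show that the stable equivalence $b$ actually destabilizes to a genuine complement with duality, that is, that the Pontryagin–Thom collapse $M\times D^j \to (P\times D^j)/(S^j_P\partial_1 P)$ is realized compatibly with the normal invariants. My first attempt would be to take $j$ so large that the relevant maps are in the stable range, where homotopy classes of maps into Thom spaces agree with stable ones. Freudenthal then lets me realize the stable datum determined by $b$ unstably. After that, I would invoke the uniqueness of Spivak structures to identify the boundary and check the duality isomorphisms.
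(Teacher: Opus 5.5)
Your necessity argument is sound. Restricting the Spivak normal structure of $M\times D^j$ to the piece $P\times D^j$ of the codimension one splitting, and invoking Spivak uniqueness, gives $\nu_P\simeq f^*\nu_M$. For the clause ``restricts to the identity on $\partial_0P$'' you need the relative form of uniqueness: equivalences of Spivak normal structures are unique up to contractible choice, not merely existent.

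The sufficiency direction has a genuine gap, and it sits exactly where the content of the theorem lies. The paper itself does not prove the statement; it imports it as Theorem~B of \cite{Klein_compression2}. Your step~1 asserts that, in the stable range, the only obstruction to a codimension zero Poincar\'e embedding of the \emph{prescribed} triad $P\times D^j$ is the normal data. That assertion is the theorem, and it does not follow from the results you invoke.
\begin{itemize}
\item The existence theorem of \cite{Klein_haef} embeds a complex, say a spine $K$ of $(P,\partial_0P)$, and yields whatever regular neighborhood it yields. Identifying that thickening with $P\times D^j$ is a separate problem.
\item The compression results take a Poincar\'e immersion as \emph{input}. This applies to Theorem~\ref{thm:Whitney}, and to \cite[thm.~E]{Klein_compression}, which turns an unstable equivariant collapse into an embedding. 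Both presuppose an embedding $P\times D^{j'}\to M\times D^{j'}$ for some $j'$, which is precisely what you are trying to produce, so appealing to them is circular.
\end{itemize}

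Steps~2 and~3 do not close the gap, because no ``homotopy-theoretic complement'' is available a priori. The map $S^j_P\partial_1P\to P$ is not a sphere bundle: its homotopy fiber is the $j$-fold suspension of the homotopy fiber of $\partial_1P\to P$. Moreover, $b$ is only a stable equivalence of normal fibrations, so ``use $b$ to identify sphere bundles'' does not define a triad $C$.

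Your proposed remedy is to desuspend the stable umkehr map $M/\partial_1M\to P/\partial_1P$ that $b$ determines via $S$-duality. This yields only a map $\Sigma^j(M/\partial_1M)\to\Sigma^j(P/\partial_1P)$, which exists automatically for $j\gg 0$ because $M$ is homotopy finite, so the Freudenthal step carries no information. More importantly, a collapse map gives no way to build a space $C$ with boundary $S^j_P\partial_1P\cup(\partial_1M\times D^j\cup M\times S^{j-1})$ together with an equivalence $(P\times D^j)\cup C\simeq M\times D^j$. Checking duality for $C$ then presupposes an object that has not been built.

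What is missing is an independent construction of a first stable embedding. One route has two steps.
\begin{enumerate}
\item Use a relative stable-range embedding theorem to Poincar\'e embed $(K,\partial_0P)$ in $(M\times D^j,\partial_0P\times D^j)$ in the homotopy class of $f$. By your necessity argument, this produces an embedded thickening $T$ whose Spivak fibration is $f^*\nu_M$.
\item Prove the Poincar\'e analogue of Wall's stable classification of thickenings: stable Poincar\'e thickenings of $K$ rel $\partial_0P$ are determined by their Spivak fibrations. Then $b$ identifies $T$ with $P\times D^j$ rel $\partial_0P\times D^j$, and the complement of $T$ serves for $P\times D^j$.
\end{enumerate}
Alternatively, construct $C$ directly by fiberwise homotopy theory over $M$. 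Either way this is a substantial step, and your proposal does not supply it.
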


\begin{rem} The definition of Poincar\'e immersion given above is motivated by Smale-Hirsch theory. Let $P\to M$ be an immersion of smooth $n$-manifolds
with $P$ compact with spine dimension $p \le n-3$. By transversality, there is a $j > 0$ such that the composition 
$P \to M \subset M \times D^j$ is regularly homotopic to an embedding.
The normal bundle of this embedding is trivial, so a tubular neighborhood defines an embedding $P \times D^j \to M \times D^j$. Conversely, by Smale-Hirsch theory,
up to isotopy any embedding  $P \times D^j \to M \times D^j$ arises from an immersion $P\to M$.
\end{rem}

 \begin{defn}[{\cite[p.~764]{GK}}] The {\it homotopy spine dimension} of  $(P;\partial_0 P,\partial_1 P)$ (relative to $\partial_0 P$) is $\le p$ 
if
\begin{enumerate}[(i).]
\item the pair $(P,\partial_0 P)$ is homotopically $p$-dimensional (i.e., up to homotopy, $(P,\partial_0 P)$ is a retract of a CW pair of relative dimension $\le p$), and
\item the pair $(P,\partial_1 P)$ is $(d-p-1)$-connected.
\end{enumerate}
\end{defn}

\begin{rem} When $\partial_0 P = \emptyset$, the homotopy spine dimension of $P$ is $\le p$ if 
it has homotopy codimension $\ge d-p$ in the sense of \cite[p.~312]{Klein_compression}.
\end{rem}

\begin{thm}[{\cite[thm.~D]{Klein_compression2}}] \label{thm:Whitney} Assume that the homotopy spine dimension of $P$ is $\le p$ 
with $d \ge \max(p+3,2p+1,\frac{3}{2}p + 2)$. Then any Poincar\'e immersion of $P$ in $M$ (relative to $\partial_0 P$) compresses to  a Poincar\'e embedding. 
\end{thm}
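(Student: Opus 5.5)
The plan is to compress one suspension coordinate at a time. By induction on $j$, it suffices to show the following: a Poincar\'e embedding of $P\times D^{j}$ in $M\times D^{j}$ relative to $(\partial_0 P)\times D^j$ is concordant to one obtained from a Poincar\'e embedding of $P\times D^{j-1}$ in $M\times D^{j-1}$ by crossing with $D^1$. The key observation is that the homotopy spine dimension of $P\times D^{j-1}$ inside $M\times D^{j-1}$ is still $\le p$, while the ambient dimension $d+j-1$ only grows. So the numerical hypothesis $d\ge \max(p+3,2p+1,\tfrac32 p+2)$ is inherited at every stage, and each single step reduces to the case $j=1$ with $(M,d)$ replaced by $(M\times D^{j-1},d+j-1)$.

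For the one-step compression I would use the Haefliger-type classification of Poincar\'e embeddings in the metastable range from \cite{Klein_haef}, \cite{Klein_haef2}. That result needs $d\ge p+3$ together with $d \ge \tfrac32 p+2$. It identifies the concordance classes of Poincar\'e embeddings of $P$ in $M$ (relative to $\partial_0 P$) with homotopy classes of $\Bbb Z_2$-equivariant lifts of $f\times f$, defined on a model of the deleted product of $P$ and fixed near $\partial_0 P$, to the homotopy complement of the diagonal in $M\times M$. In the same way one describes embeddings of $P\times D^1$ in $M\times D^1$. Crossing with $D^1$ then corresponds to the map on deleted products induced by the inclusion of the fiber of
\[
M\times M\setminus\Delta \to M\times M
\]
into the fiber of the analogous map for $M\times D^1$. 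Up to homotopy, this fibre inclusion is the suspension $S^{d-1}\to \Omega S^{d}$ with the antipodal $\Bbb Z_2$-action, twisted by the orientation character.

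The compression question thus becomes an equivariant obstruction-theory problem: lift a given equivariant section for $M\times D^1$ to a section for $M$. The obstructions lie in equivariant cohomology of the $2p$-dimensional pair
\[
(P\times P,\ \text{diagonal}\cup \partial_0\text{-part})
\]
with coefficients in the homotopy groups of the homotopy fibre of $S^{d-1}\to\Omega S^d$. By the Freudenthal theorem this fibre is $(2d-3)$-connected. Hence the obstruction groups vanish once $2p \le 2d-3$, and the corresponding uniqueness groups vanish in the same range. Both are implied by $d\ge 2p+1$, which is the role of that hypothesis. This yields a section for $M$, and therefore a Poincar\'e embedding of $P$ in $M$ that is concordant, after crossing with $D^1$, to the given one.

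The main obstacle is the identification in the second paragraph. One must check that crossing with $D^1$ in the embedding world corresponds, under the Haefliger-type classification, exactly to fibrewise suspension of the diagonal-complement data. This has to be done relative to $\partial_0 P$ and with the orientation twist kept track of. I would first try to establish it directly from the naturality of the construction of \cite{Klein_haef2} under products with $D^1$, using that the complement of $P\times D^1$ in $M\times D^1$ is obtained from the complement $C$ by the fibrewise suspension $S_P^1$ construction. Once this compatibility is in place, the remaining connectivity estimates are routine.
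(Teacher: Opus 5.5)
The step that fails is your identification of the fibre map in the second paragraph, and with it the connectivity estimate in the third. Crossing with $D^1$ does not induce the Freudenthal map $S^{d-1}\to\Omega S^{d}$. The diagonal complement for $M\times D^1$ is obtained from the one for $M$ by attaching the \emph{unreduced} fibrewise suspension of the normal sphere fibration of the diagonal. This is the $S^1_P$-type construction you mention at the end. So over a diagonal point the map of fibres is the equatorial inclusion $S^{d-1}\subset S^{d}$, which is null-homotopic. Globally the comparison map is only $(d-1)$-connected, since its homotopy cofibre is built from Thom spaces of the normal $S^{d-1}$-fibration of the diagonal. Its homotopy fibre is therefore $(d-2)$-connected, not $(2d-3)$-connected.

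Your own bookkeeping signals the problem. The condition $2p\le 2d-3$ only says $d\ge p+2$, so $d\ge 2p+1$ is never used; the sentence calling it ``the role of that hypothesis'' is a non sequitur. As written, the argument would give compression under the metastable hypotheses alone, and that is false. Take $p=k=4$, $d=8$, $P=S^4\times D^4$, and the degree one normal map $S^4\times S^4\to S^8$. The class $x=a+b\in K_4$ is represented by a Poincar\'e immersion, but $\mu(x)=\mu(a)+\mu(b)+\lambda(a,b)=1\neq 0$. By Theorem~\ref{thm:Klein_sphere} this immersion does not compress, although $8\ge\max(p+3,\tfrac{3}{2}p+2)$.

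With the corrected fibre your scheme does give the right range, and your reduction to $j=1$ is fine. The lifting obstructions lie in $H^{q+1}$ of the $2p$-dimensional equivariant deleted-product pair, with coefficients in $\pi_q$ of a $(d-2)$-connected fibre. They vanish as soon as $2p\le d-1$, i.e.\ $d\ge 2p+1$. For $d=2p$ the first obstruction group is where the self-intersection invariant lives, which is consistent with the example. Uniqueness would need $d\ge 2p+2$, but you only need existence.

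Two points would still need real work. First, you must justify that the Haefliger-type classification you invoke applies to codimension zero embeddings of the prescribed triad $P$ relative to $\partial_0P$, and not merely to embeddings of a spine with the normal data as output. Second, you must prove its compatibility with crossing with $D^1$.

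For comparison, the paper gives no proof of its own here; it quotes \cite[thm.~D]{Klein_compression2}. A much shorter route uses tools the paper itself invokes in the proofs of Theorem~\ref{thm:Klein_sphere} and Lemma~\ref{lem:disjoint-embedding}; this is stated for interior immersions, and the relative case needs the rel $\partial_0P$ form of the criterion. In the metastable range a Poincar\'e immersion compresses iff its equivariant stable collapse onto $\tilde P/\partial_1\tilde P$ desuspends \cite[thm.~E]{Klein_compression}. That target is $(d-p-1)$-connected and the source has dimension $\le d$. So the equivariant Freudenthal theorem desuspends the collapse whenever $d\le 2(d-p-1)+1$, which is again exactly $d\ge 2p+1$.
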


 \begin{rem} When $d \ge 7$, the inequality $d \ge \max (p+3,2p+1,\frac{3}{2}p + 2)$ simplifies to  $d \ge 2p+1$.
 \end{rem}

\subsection{Surgery on a Poincar\'e embedding} Let $(P;\partial_0 P,\partial_1 P)$ be a Poincar\'e triad and suppose
 $(N;\partial_0 P,N_1)$ is a codimension one splitting.

\begin{defn} \label{defn:surgery}
By {\it surgery} on $(N;\partial_0 P, N_1)$, we mean the codimension one splitting
 \[
(N';\partial_1 P, N_1) \, ,
 \]
 i.e., $N' = \partial_1 P \cup_{\partial_{01} P} N_1$.
The {\it trace} of the surgery is the Poincar\'e cobordism from $N$ to $N'$ given by the triad
\[
((N \times I)\cup_{\partial_0 P \times\{1\}} P; N, N') \, .
\]
\end{defn}
Conversely, surgery on $(N';\partial_1 P, N_1)$ recovers $(N;\partial_0 P, N_1)$
with the trace turned ``upside down.''

More generally, suppose one is given a codimension zero Poincar\'e embedding $\partial_0 P \to N$. Then $N \simeq W$
for some codimension one splitting $(W;\partial_0 P,N_1)$. Surgery on the latter
produces another codimension one splitting $(W';\partial_1 P,N_1)$. One says in this instance $W'$ is the
 result of {\it surgery on the Poincar\'e embedding} $\partial_0 P \to N$.

\begin{ex} Let
\[
(P;\partial_0 P,\partial_1 P) = (D^{p+1} \times D^{q};S^p \times D^q, D^{p+1} \times S^{q-1})\, .
\] 
Surgery in the above sense on a codimension zero smooth embedding $\partial_0 P\to N$ results in what one classically means by a surgery on a framed
embedded $p$-sphere. 
\end{ex} 

\section{Poincar\'e normal maps}\label{sec:normal-maps}

 Let $(M^d,\xi_M,\alpha_M)$ and $(X^d,\xi,\alpha)$ be Spivak normal structures. 
In particular, $M$ and $X$ are equipped with fundamental classes.

\begin{defn} A {\it normal map} from $M$ to $X$ consists of a pair 
\[
(f,b)
\] in which 
\begin{enumerate}[(i).]
\item $f\: M^d \to X^d$ is a map of degree one (i.e., $f_\ast([M]) = [X] \in H_d(X;L)$), and 
\item $b\: \xi_M \simeq f^\ast \xi$ is a fiber homotopy equivalence (i.e.,
$b\: M \times I \to BG$ is a homotopy from $\xi_M$ to $\xi \circ f$).
\end{enumerate}
Similarly, one has the notion of a normal map of Poincar\'e pairs.

A {\it normal bordism} (also known as a {\it normal cobordism}) from $(f_0,b_0) \: M_0 \to X$ to $(f_1,b_1)\: M_1 \to X$ is
a normal map of triads 
\[
((f,b);(f_0,b_0),(f_1,b_1)) \: (W;M_0,M_1) \to (X\times I;X\times\{0\},X\times\{1\})\, .
\]
Let 
\[
\cal N(X,\xi)
\] denote the set of cobordism classes of normal maps with target $X$.
\end{defn}

\begin{rem} The fiber homotopy equivalence datum creates notational clutter.  There are three other notions of Poincar\'e normal map. All four notions become equivalent upon passing to normal cobordism, provided that the map $\xi\: X\to BG$ has been converted to a fibration. Two of the
 three notions do not require specifying a fiber homotopy equivalence. This is a feature unique to the Poincar\'e duality category.
 \end{rem}

\begin{defn}  A {\it strict normal map}
\[
f\: M \to X
\]
is a map of spaces $f\: M^d \to X^d$
such that
\begin{enumerate}[(i).]
\item  $f$ is a map over $BG$, i.e., $\xi \circ f = \xi_M$, and
\item $f$ is of degree one:  $f_\ast([M]) = [X]$.
\end{enumerate}
\end{defn}

Similarly, the notion of strict normal map extends to Poincar\'e pairs and cobordisms. Let
$\cal N'(X,\xi)$ denote the cobordism classes of strict normal maps with target $X$.

In the definition of normal map given above, it was not required that $f$ preserve the normal invariants. 
The following definition makes this modification. In the following, we
 do not need to assume  that $\xi\: X\to BG$ is a fibration.

\begin{defn} [Compare {\cite[p.~73]{HV}}]
 A {\it morphism} of Spivak normal structures 
 \[
 (f,b)\: (M,\xi_M,\alpha_M) \to (X,\xi ,\alpha)
 \]
consists of 
\begin{enumerate}[(i).]
\item a map $f\: M \to X$, and 
\item a fiber homotopy equivalence $b\: \xi_M \simeq f^\ast\xi$, such that
\[
f_\ast(\alpha_M) = \alpha\, ,
\]
where $f_\ast\: \pi_d(M^{\xi_M}) \cong \pi_d(M^{f^\ast\xi}) \to  \pi_d(X^{\xi})$ is the homomorphism
induced by $(f,b)$.
\end{enumerate}

The set of cobordism classes of morphisms of Spivak normal structures with target $(X,\xi ,\alpha)$ is denoted by
\[
\cal N(X,\xi,\alpha)\, .
\]
\end{defn}

The last notion of normal map combines the preservation of normal invariants with 
the strictness constraint:

\begin{defn}  A {\it strict morphism} of Spivak normal structures 
\[
f\: (M,\xi_M,\alpha_M) \to (X,\xi ,\alpha)
\]
is a map of spaces $f\: M \to X$
such that
\begin{enumerate}[(i).]
\item  $f$ is a map over $BG$, and
\item $f_\ast(\alpha_M) = \alpha$.
\end{enumerate}

Similarly, one has the notion of strict cobordisms of Spivak normal structures. The set of cobordism classes
of strict morphisms with target $(X,\xi ,\alpha)$ is denoted by $\cal N'(X;\xi,\alpha)$.
\end{defn}

Forgetting structure defines a commutative square
\begin{equation} \label{eqn:various-normal}
\xymatrix{
\cal N'(X,\xi,\alpha) \ar[r]\ar[d] & \cal N(X,\xi,\alpha) \ar[d] \\
\cal N'(X,\xi) \ar[r] & \cal N(X,\xi) 
}
\end{equation}

\begin{lem}[{\cite[cor.~5.12]{klein_surgery1}}]\label{cor:split-normal} 
If the map $\xi\: X\to BG$ is a fibration,  then every  map of \eqref{eqn:various-normal} is a bijection.
\end{lem}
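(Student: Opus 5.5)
We must show that every map in the square \eqref{eqn:various-normal} is a bijection when $\xi\colon X\to BG$ is a fibration. The plan is to prove that the two horizontal maps (strict versus non-strict) and the left vertical map (forgetting normal invariants) are bijections. Commutativity of the square then forces the fourth map to be a bijection as well.

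\emph{Horizontal maps.} We construct inverses to $\cal N'(-)\to\cal N(-)$ using the homotopy lifting property of the fibration $\xi$. Given a normal map $(f,b)$, the datum $b\colon M\times I\to BG$ is a homotopy from $\xi_M$ to $\xi\circ f$. Lifting $b$ through $\xi$, starting from $f$ at the end $\xi\circ f$, gives a homotopy $f_t$ with $\xi\circ f_t=b_t$. Then $f':=f_{\text{end}}$ satisfies $\xi\circ f'=\xi_M$, so $f'$ is strict. Since $f'\simeq f$, it still has degree one. Moreover, the induced maps of Thom spectra agree with the one determined by $(f,b)$, so normal invariants are preserved when they are part of the data. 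Applying the same construction relatively, to bordisms of triads relative to the already-strictified ends, shows that the inverse is well defined on bordism classes. Two facts remain. First, the composite in one direction is the identity, since a strict map with constant homotopy lifts to itself. Second, in the other direction, $(f',\text{const})$ is normally bordant to $(f,b)$: the mapping cylinder of the lifted homotopy, $M\times I\to X\times I$, is a normal bordism whose bundle datum interpolates between $b$ and the constant homotopy.

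\emph{Vertical maps.} Fix a strict normal map $f\colon M\to X$ of degree one. Then $f_\ast(\alpha_M)\in\pi_d(X^\xi)$ has Hurewicz image $f_\ast[M]=[X]$, so it is another normal invariant for $X$ with respect to $\xi$. By uniqueness of Spivak normal structures, there is a fiber homotopy self-equivalence $u$ of $\xi$ with $u_\ast(f_\ast\alpha_M)=\alpha$. Using again that $\xi$ is a fibration, we realize $u$ by a strict self-map $g\colon X\to X$ over $BG$ that is homotopic to the identity and induces $u$ on Thom spectra. Replacing $f$ by $g\circ f$, which is strict, of degree one, and bordant to $f$ via the cylinder of the homotopy $g\simeq \id$, yields a strict morphism of Spivak normal structures. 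This gives surjectivity of the left vertical map. Injectivity follows from the same argument applied relatively: a strict bordism $W\to X\times I$ between two normal-invariant-preserving maps can be corrected rel boundary so that it preserves the relative normal invariant.

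\emph{Main obstacle.} The delicate point is the vertical step. Uniqueness of the Spivak structure says only that some fiber homotopy equivalence carries one normal invariant to the other, while we need one of the form ``homotopic to the identity over $BG$.'' Equivalently, we must know that the group of fiber homotopy self-equivalences of $\xi$ acts transitively on the normal invariants with given Hurewicz image, through automorphisms we may absorb into the normal map. The first thing to try is to identify the normal invariants with the units acting via $[X,G]$, which is the standard description, and then check that precomposing the bundle datum by such a unit changes the normal map only by a normal bordism. If this works in the absolute case, the relative version needed for injectivity should follow by the same argument applied to pairs.
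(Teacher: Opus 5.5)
Your horizontal step is fine. Relative homotopy lifting through $\xi$, and through $\xi\circ\mathrm{pr}\colon X\times I\to BG$ for bordisms, strictifies maps and bordisms rel strict ends. The cylinder of the lift, with datum $B_s(m,t)=b_{s(1-t)}(m)$, connects $(f,b)$ to its strictification. The induced Thom-spectrum maps are homotopic, so normal invariants are carried along.

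Surjectivity of the left vertical map is also fine, and easier than you make it: if $f_*\alpha_M=u_*\alpha$, just replace $\alpha_M$ by $(f^*u^{-1})_*\alpha_M$. The worry in your last paragraph is not the real issue either. Any self-equivalence of $\xi$ over $\id_X$ is realized by a strict self-map, using the same lifting.

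Note what your cylinder from $f$ to $g\circ f$ silently uses. Relative normal invariants of $(X\times I,X\times\partial I)$ restrict to the same class on both ends. So if bordisms in $\mathcal N'(X,\xi)$ had to carry relative normal invariants extending given ones on the sources, $f_*\alpha_M$ would be a bordism invariant, and the left vertical map could not be onto. Bordisms in $\mathcal N'(X,\xi)$ therefore impose no such constraint.

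That is exactly where injectivity breaks. Post-composing a strict bordism $F\colon W\to X\times I$ with a strict self-map of $X\times I$ rel boundary leaves $\xi_W=\xi\circ\mathrm{pr}\circ F$ untouched. It only changes $F_*\alpha_W$ by a rel-boundary self-equivalence of the target fibration. So your ``correction rel boundary'' presupposes a relative normal invariant $\alpha_W$ restricting to the prescribed $\alpha_{M_0},\alpha_{M_1}$, and a bordism in $\mathcal N'(X,\xi)$ need not admit one. In general, after absorbing a unit pulled back from $X$, its relative normal invariants restrict to $(v_i)_*\alpha_{M_i}$ with $v_i\in[M_i,G]$ and $f_{i*}\bigl((v_i)_*\alpha_{M_i}\bigr)=\alpha$. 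The pair $(v_0,v_1)$ need not extend over $W$.

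This really happens. Take the collapse $f\colon S^p\times S^q\to S^{p+q}$ with trivial $\xi$, the standard framing, and $v=1+\mathrm{pr}_1^*y$ with $0\neq y\in\pi_p^s$. Then $f_*(v_*\alpha_M)=f_*\alpha_M$ but $v_*\alpha_M\neq\alpha_M$, and the product bordism $M\times I$ admits no relative normal invariant with ends $\alpha_M$ and $v_*\alpha_M$.

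What is missing is an argument that two normal invariants of the source with the same image under $f_*$ give bordant strict morphisms. This is a statement about the fiber of $f_*$ on normal invariants of $M$, and no self-map of the target can reach it. The natural candidate, the cylinder with $\xi_W$ twisted by a loop representing $v$, forces $F$ to be a lift of that loop. It therefore ends at the monodromy image of $f$ under $\mathrm{map}(M,X)\to\mathrm{map}(M,BG)$, so it does not close up. Without this argument the left vertical map is only shown to be onto, and your deduction that the right vertical map is a bijection fails with it.
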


\begin{rem} \label{rem:norma-map-convention} Unless otherwise specified, the map $\xi\: X\to BG$ is assumed to be a fibration and
a normal map refers to a representative of the set $\cal N'(X,\xi)$, i.e., a strict normal map $f\: M \to X$. The same convention will
apply to normal maps of pairs and normal cobordisms.
Similarly, when discussing morphisms of Spivak normal structures, we mean them in the strict sense.
\end{rem}

\section{Forms and formations}\label{sec:forms}

\subsection{$\epsilon$-symmetric forms} Let $\epsilon \in \{\pm 1\}$.
Let $\pi$ be a group equipped with a homomorphism $w\: \pi \to \{\pm 1\}$. 
Set $\Lambda = \Bbb Z[\pi]$. We equip $\Lambda$ with the involution $a \mapsto \bar a$ which 
on group elements is given by $g\mapsto w(g)g^{-1}$.

\begin{defn} \label{defn:symmetric-form}
An {\it $\epsilon$-symmetric form} is a pair
\[
(K,\lambda)
\]
in which $K$ is a finitely generated free $\Lambda$-module and
\[
\lambda\: K \times K \to \Lambda
\]
is a function such that for all $x,x',y,y'\in K$ and $a,b\in \Lambda$ we have
\begin{enumerate}[(i).]
\item $\lambda(x+x',y) = \lambda(x,y) + \lambda(x',y)$;
\item $\lambda(x,y+y') = \lambda (x,y) + \lambda(x,y')$;
\item $\lambda(y,x) = \epsilon\overline{\lambda(x,y)}$;
\item $\lambda(ax,by) = b\lambda(x,y)\bar a$.
\end{enumerate}
\end{defn}

The form is {\it non-singular} if the associated map of $\Lambda$-modules
\begin{align*}
\hat \lambda \: K &\to K^\ast := \hom_{\Lambda}(K,\Lambda)\\
x &\mapsto (y\mapsto \lambda(x,y))
\end{align*}
is an isomorphism; the left action on the target is defined by $(a\ell)(x) := \ell(x)\bar a$.

\subsection{$\epsilon$-quadratic forms}
Set $\epsilon = (-1)^k$ and let 
\begin{equation} \label{eqn:Qk}
Q_k := Q_k(\pi,w)
\end{equation}
denote the abelian group quotient
\[
\Lambda/\langle r - \epsilon \bar r\rangle \, ,
\]  where $\langle r - \epsilon \bar r\rangle \subset \Lambda$ is the additive subgroup generated by elements of the form $r - \epsilon \bar r$ for $r \in \Lambda$.

\begin{ex} If $\pi$ is trivial, then
\[
Q_k = \begin{cases} \Bbb Z, & k \text{ even},\\
\Bbb Z_2, & k \text{ odd}.\\
\end{cases}
\]
\end{ex}

\begin{defn} \label{defn:quadratic-form}
A {\it $\epsilon$-quadratic form} is an ordered triple
\[
(K,\lambda,\mu)
\]
such that $(K,\lambda)$ is  an $\epsilon$-symmetric form and
\[
\mu\: K \to Q_k
\]
is a function such that for all $x,y\in K$ and  $a \in \Lambda$, we have
\begin{enumerate}[(i).]
\item $\lambda(x,y) = \mu(x+y) - \mu(x) -\mu(y)$;
\item $\lambda(x,x) = \mu(x) + \epsilon \overline{\mu(x)}$;
\item $\mu(ax) = a\mu(x) \bar a$.
\end{enumerate}
In (i), we are regarding $\lambda(x,y) \in Q_k$ via the quotient homomorphism $\Lambda \to Q_k$. In (ii), the right side uses the symmetrization operator $s_{\epsilon} \: Q_k \to \Lambda$ defined by $s_{\epsilon}([r]) = r + \epsilon \bar r$.

One says that $(K,\lambda,\mu)$ is {\it non-singular} if $(K,\lambda)$ is non-singular.
\end{defn}

The definitions of symmetric and quadratic forms
extend by the same formulas to finitely generated
projective modules. We use this extension in the
hyperbolic construction and the general Lagrangian
statement below. The $L$-groups here use free modules.

\begin{ex}[Hyperbolic Forms] For a finitely generated projective module $L$, one has a
nonsingular form
\[
H_{\epsilon}(L) := (L\oplus L^\ast, \lambda,\mu)
\]
in which
$\lambda((x,f),(y,g)) = f(y) + \epsilon \overline{g(x)}$
and $\mu(x,f) = [f(x)] \in Q_k$. Note that the adjoint $\hat\lambda\: L \oplus L^* \to (L \oplus L^*)^\ast = L^* \oplus L$
corresponds to the matrix
\[
\lambda = \begin{pmatrix}
0 & 1\\
\epsilon & 0
\end{pmatrix}
\] 
(cf.~\cite[defn.~11.6(iv)]{Ranicki_ags}).
\end{ex}

\subsubsection{Cobordism of forms}
Nonsingular $\epsilon$-quadratic forms 
\[
(K,\lambda,\mu) \quad \text{ and } \quad (K', \lambda',\mu')
\] are {\it cobordant}
if there exists an isomorphism
\[
(K,\lambda,\mu) \oplus H_{\epsilon}(F) \cong (K',\lambda',\mu') \oplus H_{\epsilon}(F')
\]
for finitely generated free $\Lambda$-modules $F,F'$ (cf.~\cite[defn~11.55]{Ranicki_ags}).

\subsubsection{Lagrangians}
A {\it Lagrangian} \cite[p.~281]{Ranicki_ags} for an $\epsilon$-quadratic form $(K,\lambda,\mu)$ is a direct summand 
$L \subset K$
such that $\mu(L) = 0$ and moreover $L^\perp = L$, where 
\[
L^\perp = \{x\in K|\, \lambda(x,y)= 0, \text{ for all } y \in L\}\, .
\]
For example, $L$ is a Lagrangian for the hyperbolic form $H_{\epsilon}(L)$.

According to  \cite[thm.~11.51]{Ranicki_ags}, a non-singular $\epsilon$-quadratic form $(K,\lambda,\mu)$
admits a Lagrangian if and only if it is isomorphic to a hyperbolic form $H_{\epsilon}(L)$.
Observe that when restricted to  $L \subset H_{\epsilon}(L)$, the function $\mu$ vanishes.

\subsection{Formations}
\begin{defn}\label{defn:formation}
A  {\it ($\epsilon$-quadratic) formation}
is an ordered  5-tuple
\[
(K,\lambda,\mu;F,G)
\]
in which $(K,\lambda,\mu)$ is a non-singular $\epsilon$-quadratic form over $\Lambda$ and 
$F,G \subset K$ are Lagrangians, such that $F$ and $G$ are finitely generated and free $\Lambda$-modules.
\end{defn}

An {\it isomorphism} 
\[
(K,\lambda,\mu;F,G) @> \cong >>  (K',\lambda',\mu'; F',G')
\] of $\epsilon$-quadratic formations is 
an isomorphism $f\: (K,\lambda,\mu) \to (K',\lambda',\mu')$ of forms such that $f(F) = F'$ and $f(G) = G'$.

Ranicki \cite[prop.~12.3(ii)]{Ranicki_ags} shows that every $\epsilon$-quadratic formation is isomorphic
to 
\[
(H_{\epsilon}(F);F,\alpha(F))\, ,
\]
for some automorphism $\alpha\: H_{\epsilon}(F) \to H_{\epsilon}(F)$. 

A formation is {\it trivial} if it is isomorphic to $(H_{\epsilon}(F);F,F^*)$ for some $F$. 
A {\it stable isomorphism} of formations
\[
[f]\: (K,\lambda,\mu;F,G) \to (K',\lambda',\mu';F',G') 
\]
is an isomorphism of formations
\[
f\: (K,\lambda,\mu;F,G) \oplus T @> \cong >> (K',\lambda',\mu';F',G') \oplus T'
\]
for some trivial formations $T,T'$.

\begin{defn}[Boundary formations] \label{defn:boundary-formation}
If $(K,\lambda,\mu)$ is a $(-\epsilon)$-quadratic form,\footnote{Possibly singular.} then its {\it boundary formation} is the $\epsilon$-quadratic formation
\[
\partial(K,\lambda,\mu) :=  (H_{\epsilon}(K); K, \Gamma_{(K,\lambda)})\, ,
\]
where 
\[
 \Gamma_{(K,\lambda)} := \{(x,\hat \lambda(x)) \in K\oplus K^*| \, x\in K\}
 \]
 is the {\it graph Lagrangian} of $(K,\lambda)$ (here $\hat \lambda(x)(y) := \lambda(x,y)$).
 \end{defn}
 
The graph Lagrangian and the boundary formation 
depend only on the pair $(K,\lambda)$; they do  not depend
 on $\mu$. 
 
 \begin{lem}[Ranicki {\cite[prop.~12.19]{Ranicki_ags}}]
 \begin{enumerate}[(i).]
 \item A Lagrangian of  $H_{\epsilon}(K)$ is a  graph Lagrangian if and only 
 it is a direct complement of $K^\ast$. 
 \item An $\epsilon$-quadratic formation $(K,\lambda,\mu;F,G)$ is isomorphic to a boundary
 if and only if it has a Lagrangian $H$ which is a direct complement to both $F$ and $G$.
 \end{enumerate}
 \end{lem}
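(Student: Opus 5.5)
For part (i), I will work in $H_\epsilon(K)=K\oplus K^\ast$ and use the decomposition directly. If $\Gamma_{(K,\lambda)}$ is a graph Lagrangian, then every $(x,f)\in K\oplus K^\ast$ can be written uniquely as
\[
(x,f)=(x,\hat\lambda(x))+(0,\,f-\hat\lambda(x)),
\]
so $\Gamma\oplus K^\ast=K\oplus K^\ast$. Conversely, suppose $L$ is a Lagrangian with $L\oplus K^\ast = K\oplus K^\ast$. Then projection onto $K$ restricts to an isomorphism $L\cong K$. Hence $L=\{(x,\phi(x))\}$ for a unique $\Lambda$-linear $\phi\colon K\to K^\ast$. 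I then set $\lambda(x,y):=\phi(x)(y)$, adjusting sidedness to match the conventions of Definition~\ref{defn:symmetric-form} and the left action on $K^\ast$.

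The Lagrangian condition must now be translated into properties of $\phi$:
\begin{enumerate}[(a).]
\item Isotropy, $\lambda_H((x,\phi x),(y,\phi y))=\phi(x)(y)+\epsilon\overline{\phi(y)(x)}=0$, says $\phi$ is $(-\epsilon)$-symmetric.
\item The vanishing of $\mu_H(x,\phi x)=[\phi(x)(x)]\in Q_k$ says $\phi(x)(x)\in\langle r-\epsilon\bar r\rangle$.
\end{enumerate}
Condition (b) is what allows a $(-\epsilon)$-quadratic refinement $\mu$ of $\lambda$ to exist; in any case the graph only depends on $\lambda$, as noted after Definition~\ref{defn:boundary-formation}. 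The subtle point is whether the paper's notion of graph Lagrangian requires a form $(K,\lambda)$ whose symmetrization is $\phi$, or just $\phi$ itself. I take $\lambda=\phi$ as a $(-\epsilon)$-symmetric form, which is exactly what isotropy gives.

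For part (ii), one direction is immediate. In a boundary $(H_\epsilon(K);K,\Gamma)$, the Lagrangian $H=K^\ast$ is complementary to $K$ trivially, and to $\Gamma$ by part (i).

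Conversely, let $H$ be a Lagrangian complementary to both $F$ and $G$. The first step is to identify $(K,\lambda,\mu;F)$ with $(H_\epsilon(F);F)$ so that $H$ corresponds to $F^\ast$. The pairing $\hat\lambda$ restricted to $H\to F^\ast$ is an isomorphism, by nonsingularity and $F\oplus H=K$ with both isotropic. This gives an isomorphism $K=F\oplus H\cong F\oplus F^\ast$, and it respects $\lambda$ because $F$ and $H$ are Lagrangians. It respects $\mu$ because $\mu$ vanishes on $F$ and $H$, and $\mu$ on a sum is determined by $\lambda$ through axiom (i) of Definition~\ref{defn:quadratic-form}. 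This is the standard argument behind \cite[thm.~11.51]{Ranicki_ags}.

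Under this identification, $G$ is a Lagrangian complementary to $H=F^\ast$. By part (i), $G=\Gamma_{(F,\lambda')}$ for some $(-\epsilon)$-symmetric $\lambda'$, with a quadratic refinement obtained from the condition $\mu(G)=0$. Hence the formation is isomorphic to $\partial(F,\lambda',\mu')$. Freeness of $F$ is given by Definition~\ref{defn:formation}.

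I expect the main obstacle to be producing the quadratic refinement $\mu'$ on $F$ from the condition $\mu_H(G)=0$. Over $\Lambda$ one has elements $\phi(x)(x)\in\langle r-\epsilon\bar r\rangle$, and I need a function $\mu'\colon F\to Q_{k+1}$ (the $(-\epsilon)$ version) with $\lambda'(x,x)=\mu'(x)-\epsilon\overline{\mu'(x)}$. I would get it by choosing a basis of $F$ and writing $\phi=\psi-\epsilon\psi^\ast$ with $\psi$ an upper-triangular "split" form, using the basis-wise vanishing of $\mu$ on diagonal entries, and then setting $\mu'(x)=[\psi(x)(x)]$. This is routine but convention-sensitive, and in any case the statement only needs $\lambda'$, since the boundary depends only on $(K,\lambda)$.
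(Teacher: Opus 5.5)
Your proof is correct. It is essentially the standard argument behind \cite[prop.~12.19]{Ranicki_ags}, which the paper quotes without proof: complements of $K^\ast$ are exactly graphs of $\Lambda$-maps $\phi\colon K\to K^\ast$, the Lagrangian conditions become $(-\epsilon)$-symmetry and evenness of $\phi$, and (ii) reduces to (i) via the isometry $(K,\lambda,\mu;F,H)\cong(H_\epsilon(F);F,F^\ast)$.

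The one correction is that the triangular splitting $\phi=\psi-\epsilon\psi^\ast$ is not optional. Both $\partial$ and the graph Lagrangians in (i) are defined for $(-\epsilon)$-quadratic forms, so you need the refinement in the converse of (i) as well as in (ii). If $K$ is only projective, first stabilize it to a free module and extend $\phi$ by zero before running the splitting argument.
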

 
 \subsubsection{Cobordism of formations}
 
Two $\epsilon$-quadratic formations $(K,\lambda,
 \mu;F,G)$ and  $(K',\lambda',\mu';F',G')$ are {\it cobordant} if there exist boundary formations $B,B'$ and
  a stable isomorphism
 \[
 [f] \: (K,\lambda, \mu;F,G) \oplus B \to  (K',\lambda', \mu';F',G') \oplus B'\, .
 \]
Cobordism defines an equivalence relation \cite[prop.~12.21]{Ranicki_ags}.
 
 \begin{defn}[Ranicki {\cite[defn.~11.57,12.23]{Ranicki_ags}}] Let $k \ge 0$ be an integer.
The  $L$-group \[L_{2k}(\pi,w)\] is  the abelian group of cobordism classes of non-singular $(-1)^k$-quadratic forms.
The $L$-group \[ L_{2k+1}(\pi,w)\] is  the abelian group of cobordism classes of non-singular $(-1)^k$-quadratic formations. In each case,
addition is induced
by  direct sum.
 \end{defn}
 
 \begin{rem}  $L_{d}(\pi,w)$ is canonically isomorphic to the corresponding Wall $L$-group $L^h_{d}(\pi,w)$.
\end{rem}

\section{Cup products and Hopf invariants}  \label{sec:Hopf}
\subsection{Based $\pi$-spaces} Let $\pi$ be a group. By a {\it based $\pi$-space}, we mean a based space $A$ equipped with a left action
\[
\pi \times A \to A
\]
in which the basepoint is stabilized. A morphism of based $\pi$-spaces is
a basepoint preserving equivariant map. Let $R(\pi,\ast)$ denote the category of based $\pi$-spaces.
Then $R(\pi,\ast)$ is a model category where a map $f\: A\to B$ is a weak equivalence (fibration) iff it is
a weak equivalence (resp.~fibration)  upon application of the forgetful functor $R(\pi,\ast) \to \Top_\ast$, and a cofibration is a morphism which 
 has the left lifting property with respect to the acyclic fibrations. Alternatively, a map $A\to B$
 is a cofibration if and only if it is a retract of a map $A'\to B'$ in which $B'$ is obtained from $A'$ 
 by attaching free $\pi$-cells, where a free $\pi$-cell of dimension $j$ is $D^j \times \pi$. In particular, $A$ is cofibrant 
 if and only if it is a retract of a free based $\pi$-cell complex. Every based $\pi$-space is fibrant.
 
 We say that $A$ is
{\it finite} if it is built up from a point by attaching finitely many free $\pi$-cells.  We say that
$A$ is {\it homotopy finite} if it is weakly equivalent to a finite $\pi$-space.

For cofibrant based $\pi$-spaces $A$ and $B$,
let \[
[A,B]_\pi
\] denote the based set of homotopy classes of equivariant maps $A\to B$.
Let 
\[
\{A,B\}_\pi
\] be the abelian group of stable homotopy classes of equivariant maps $A\to B$.
An {\it equivariant stable map} is a representative of  any element of $\{A,B\}_\pi$.
If $A$ is homotopy finite, then every element of $\{A,B\}_\pi$ is represented by an (unstable) equivariant map
$\Sigma^j A\to \Sigma^j B$ for some $j$.
We let $E\: [A,B]_\pi \to \{A,B\}_\pi$ denote the stabilization map.

\subsection{The equivariant cup product} Let $A$ and $B$ be cofibrant  $\pi$-spaces, with $A$ homotopy finite.
We let $\pi$ act diagonally on the smash product $B\smsh B$.
For $\pi$-equivariant stable maps $f,g\: A\to B$, the {\it equivariant cup product}
\begin{equation} \label{eqn:cup}
f\cup g \in \{A,B\smsh B\}_\pi
\end{equation}
is defined by the composition of the equivariant stable maps $f\smsh g$ and $\Delta_A$, i.e.,
\[
f\cup g := (f \smsh g) \circ \Delta_A\, .
\]
An explicit representative can be given: Let $f\: \Sigma^j A \to \Sigma^j B$ and $g\: \Sigma^j A\to \Sigma^j B$ 
be representatives.
Then $f\cup g$ is represented by the composition 
\[
\Sigma^{j} A @> \Sigma^{j} \Delta_{A} >> \Sigma^{j} A \smsh A @>f \smsh 1_A >>  \Sigma^{j} B \smsh A = B\smsh \Sigma^j A @> 1_B\smsh g >> B \smsh \Sigma^j B 
= \Sigma^j B\smsh B \, .
\]
The equivariant cup product induces a pairing
\[
\cup \: \{A,B\}_{\pi} \times \{A,B\}_{\pi}  \to \{A,B\smsh B\}_{\pi} \, .
\]

The proof of the following result is straightforward and we leave its details to the reader.

\begin{lem} \label{lem:cup-prop} The equivariant cup product satisfies the following identities:
\begin{enumerate}[(i).]
\item $g\cup f = \tau\circ (f\cup g)$, where $\tau\: B\smsh B \to B\smsh B$ is the twist map;
\item $(f+f')\cup g = f\cup g + f'\cup g$;
\item If $C$ is a based $\pi$-space and $a,b\in \{B,C\}_{\pi}$, then $(a\circ f)\cup (b\circ g) = (a\smsh b)\circ (f\cup g)$;
\end{enumerate}
\end{lem}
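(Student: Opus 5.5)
The three identities follow from the symmetric monoidal structure of the smash product and the naturality of the diagonal. I will check each one on explicit unstable representatives $f,g\colon \Sigma^j A\to \Sigma^j B$ (and $a,b\colon \Sigma^j B\to\Sigma^j C$), enlarging $j$ when needed so that all maps are defined at a common suspension level. This is possible because $A$ is homotopy finite and $B$ is cofibrant, so stable classes are represented at a finite level. Every map used below is $\pi$-equivariant when $\pi$ acts diagonally on smash products, and the diagonal $\Delta_A$ is equivariant for this action. All homotopies written down are therefore equivariant, and the identities hold in $\{A,B\smsh B\}_\pi$ and $\{A,C\smsh C\}_\pi$.

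For (i), I use $\tau_A\circ\Delta_A=\Delta_A$ and the naturality of the twist: $\tau_B\circ(f\smsh g)=(g\smsh f)\circ\tau_A$ in the stable category. The only care needed is with suspension coordinates. In the explicit representative, $f$ acts on one $\Sigma^j$ factor and $g$ on the same coordinates after a shuffle. Writing $f\cup g$ stably as $(f\smsh g)\circ\Delta_A$, where $f\smsh g\colon \Sigma^{2j}A\smsh A\to\Sigma^{2j}B\smsh B$ is desuspended by $j$, the twist also interchanges the two copies of $S^j$. That interchange introduces a sign $(-1)^{j^2}$ on each side. The two signs cancel, since the same permutation of sphere coordinates occurs in both $\tau\circ(f\cup g)$ and $g\cup f$. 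This bookkeeping of the sphere coordinates is the only step I expect to need care, and I will resolve it by working in the homotopy category of naive $\pi$-spectra, where the smash product is symmetric monoidal and $f\smsh g$ is canonically defined.

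For (ii), I show that the smash product of stable maps is biadditive: $(f+f')\smsh g=f\smsh g+f'\smsh g$. In the representative, $f+f'$ is formed with the pinch map on a suspension coordinate of $\Sigma^jA$ that $f\smsh 1_A$ uses. Smashing with $1_A$ and then applying $1_B\smsh g$ commutes with this pinch, because $-\smsh 1_A$ and $1_B\smsh -$ are functorial and preserve the suspension coordinate being pinched. Precomposing with $\Sigma^j\Delta_A$ also preserves addition, since $\{-,-\}_\pi$ is additive in the source under precomposition.

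For (iii), I combine functoriality of the smash product with the interchange law:
\[
(a\circ f)\cup(b\circ g)=((a\circ f)\smsh(b\circ g))\circ\Delta_A=(a\smsh b)\circ(f\smsh g)\circ\Delta_A=(a\smsh b)\circ(f\cup g).
\]
At the level of representatives, this is the observation that $(a\smsh 1)(f\smsh 1)=(af\smsh 1)$ and similarly on the right factor. The two factors act on different smash factors, so they commute up to the same coordinate shuffle already discussed in (i).
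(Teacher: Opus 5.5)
Your proposal is correct. The paper gives no proof of this lemma; it calls it straightforward and leaves it to the reader. Your reduction of (i)--(iii) to formal properties of the homotopy category of naive $\pi$-spectra is the intended routine verification: naturality of the symmetry together with $\tau_A\circ\Delta_A=\Delta_A$ for (i), biadditivity of $\smsh$ and bilinearity of composition for (ii), and bifunctoriality of $\smsh$ for (iii). Working in that category also makes the sign bookkeeping in (i) unnecessary, although your cancellation of the two factors $(-1)^{j^2}$ is right.

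There are two harmless slips in your representative-level remarks. Representing $a,b\in\{B,C\}_\pi$ at a finite suspension level would require $B$ to be homotopy finite, not merely cofibrant. In (ii), the map $1_B\smsh g$ need not preserve the pinched suspension coordinate; what you actually use is that post-composition with any map is additive for a sum formed by pinching the source.
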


\subsection{The equivariant stable Hopf invariant} For a cofibrant based $\pi$-space $B$, let 
\[
D_2(B)  := (B\smsh B) \smsh_{\Bbb Z_2} (E\Bbb Z_2)_+
\]
be the {\it quadratic construction}. It is a based $\pi$-space. The quadratic construction $D_2$ is a functor on the category of  equivariant stable maps, so
if $f\: A\to B$ is an equivariant stable map,  it induces an equivariant stable map
\[
D_2(f)\: D_2(A) \to D_2(B)\, .
\]
The {\it equivariant stable Hopf invariant}
\[
H\: \{A,B\}_{\pi} \to \{A,D_2(B)\}_{\pi}
\]
is defined as in \cite[\S4]{klein2026stablehopf}. We will not detail its construction here. Rather, our focus will be 
on reviewing its properties.

For equivariant stable maps $f,g\: A\to B$, let
\[
f\cup_2 g \in \{A,D_2(B)\}_{\pi}
\] 
be defined as  the stable composition
\[
A  @> f\cup g >> B \smsh B \to D_2(B)\, ,
\]
where the first map is the equivariant  cup product (see \eqref{eqn:cup} above) and the second map  is the equivariant homotopy class defined by the diagram
\[
B \smsh B @<\sim << (B \smsh B) \smsh (E\Bbb Z_2)_+@>>> D_2(B)\, ,
\]
in which the left map is induced by the constant map $E\Bbb Z_2\to \ast$ and the right map is the quotient map defined by 
taking coinvariants. 

Let 
\[
\tr\: D_2(B) \to B\smsh B
\] denote the equivariant stable {\it transfer map} associated with the double
cover $(B^{\times 2} \times E\Bbb Z_2, B^{\vee 2}\times E\Bbb Z_2)\to (B^{\times 2}  \times_{\Bbb Z_2} 
E\Bbb Z_2, B^{\vee 2} \times_{\Bbb Z_2} E\Bbb Z_2)$.

 Let 
 \[
 E\: [A,B]_{\pi} \to \{A,B\}_{\pi}
 \] be the stabilization map. In the following,  we write $\dim A \le s$ if $A$ is built up from a point by attaching free $\pi$-cells of 
 dimension at most $s$.

\begin{thm}[{\cite[Add.~D]{klein2026stablehopf}}]  \label{thm:Hopf-properties} The equivariant stable Hopf invariant satisfies:
\begin{enumerate}[(1).]
\item (Cartan Formula). $H(f+g) = H(f) + H(g) + f \cup_2 g$; 
\item (Transfer Formula). $\tr H(f)= (f\smsh f) \circ \Delta_A - \Delta_B \circ f$;
\item (Composition Formula). $H(g\circ f) = H(g)\circ f + D_2(g) \circ H(f)$.
\item (Vanishing on unstable classes). If $f = E(f')$, then $H(f) = 0$. 
\item (Desuspension). If $B$ is $r$-connected, $\dim A \le 3r+1$, and $H(f) = 0$,
then $f = E(f')$ for some $f'\in [A,B]_\pi$.
\end{enumerate}
\end{thm}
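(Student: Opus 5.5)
The plan is to fix a concrete model for $H$ and then derive (1)–(5) from it. For $H$ I would use the ``geometric'' (Crabb–Ranicki style) Hopf invariant, made $\pi$-equivariant by letting $\pi$ act trivially on all suspension coordinates.

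\emph{Model for $H$.} Represent $f$ by an equivariant map $F\colon \Sigma^V A\to\Sigma^V B$ with $V=\Bbb R^j$. Let $\Bbb Z_2$ act on $V\oplus V$ and on $B\smsh B$ by the swap. This produces two $(\pi\times\Bbb Z_2)$-equivariant maps $\Sigma^{V\oplus V}A\to \Sigma^{V\oplus V}(B\smsh B)$:
\begin{enumerate}[(a).]
\item the composite $(F\smsh F)\circ \Sigma^{V\oplus V}\Delta_A$;
\item the map $\Sigma^{V}\Delta_B\circ F$, extended using the diagonal copy of $V$ and the anti-diagonal $V^-$.
\end{enumerate}
Both maps agree on the $\Bbb Z_2$-fixed set $\Sigma^V A$, since there they are literally equal. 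Their equivariant difference is therefore a relative class supported off the fixed set. Off the fixed set the $\Bbb Z_2$-action is free, and the complement retracts to $S(V^-)$. The difference thus gives a $\pi$-equivariant stable map $A\to (B\smsh B)\smsh_{\Bbb Z_2} S(V^-)_+$. Letting $j\to\infty$ and using $S(\infty V^-)\simeq E\Bbb Z_2$ yields $H(f)\in\{A,D_2(B)\}_\pi$. I would first check that this is well defined, i.e.\ independent of $j$ and of the homotopy class of $F$, by the usual suspension-compatibility argument.

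\emph{Properties (2)–(4).}
\begin{itemize}
\item For (4): if $F=\Sigma^V f'$ then maps (a) and (b) coincide on the nose, so the difference class and hence $H(f)$ vanish.
\item For (2): applying the transfer amounts to forgetting the $\Bbb Z_2$-action on the difference class. The result is the plain difference of the underlying stable maps, which is $(f\smsh f)\Delta_A-\Delta_B f$.
\item For (3): write $g\circ f$ as a composite of representatives. The difference class for $G\circ F$ splits as (difference for $G$ precomposed with $F$) plus ($D_2(G)$ applied to the difference for $F$). This is a formal additivity of relative difference classes along a two-step homotopy.
\end{itemize}

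\emph{Cartan formula (1).} Represent $f+g$ via the pinch $\Sigma^V A\to\Sigma^VA\vee\Sigma^VA$. Then $(F+G)\smsh(F+G)$ decomposes into the four terms $F\smsh F$, $G\smsh G$, $F\smsh G$, $G\smsh F$, supported on disjoint regions.
\begin{itemize}
\item The two diagonal terms contribute $H(f)$ and $H(g)$, since $\Delta_B$ is additive with respect to the pinch.
\item The cross terms $F\smsh G$ and $G\smsh F$ are interchanged freely by $\Bbb Z_2$ and lie entirely off the fixed set. Their orbit contributes exactly the image of $f\cup g$ under $B\smsh B\to D_2(B)$, which is $f\cup_2 g$.
\end{itemize}

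\emph{Desuspension (5) — the main obstacle.} Nonequivariantly, this is the metastable EHP statement. If $B$ is $r$-connected, the sequence $B\to\Omega^\infty\Sigma^\infty B\to \Omega^\infty\Sigma^\infty D_2(B)$, with second map $H$, is a fibration sequence through dimension about $3r+1$. This follows from the Snaith/James splitting together with Blakers–Massey, since $D_3$ and higher terms are $(3r+2)$-connected.

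Equivariantly I would argue as follows. Because $A$ is a free $\pi$-complex, equivariant maps from $A$ into a $\pi$-space $Y$ correspond to sections over $A/\pi$ of the Borel fibration $A\times_\pi Y\to A/\pi$. So it suffices to do obstruction theory fiberwise over $A/\pi$, which has cells only in dimensions $\le 3r+1$.

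Concretely, I would induct over the free $\pi$-cells of $A$. The obstruction to lifting $f$ across $B\to QB$ on a cell lies in the homotopy of the homotopy fiber, and that fiber agrees with $\Omega QD_2(B)$ through the relevant range. The vanishing of $H(f)$ kills these obstructions compatibly.

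The delicate point is to make the connectivity estimate of the comparison map precise, in the form: the comparison $B\to \mathrm{hofib}(QB\to QD_2B)$ is $(3r+1)$-connected. A second delicate point is to ensure the indeterminacy at each stage does not obstruct later cells. This is where the exact bound $3r+1$ must be tracked carefully.
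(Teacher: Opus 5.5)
The paper does not prove this theorem. It imports both the construction of $H$ and properties (1)--(5) from \cite{klein2026stablehopf}. Your sketches of (1)--(4) in the Crabb--Ranicki geometric model are the standard ones and are fine as an outline:
\begin{itemize}
\item naturality of the unstable diagonal gives (4);
\item forgetting the $\mathbb{Z}_2$-action gives (2);
\item for $g\colon B\to C$, the identity $(GF\smsh GF)\Delta_A-\Delta_C GF=(G\smsh G)\bigl((F\smsh F)\Delta_A-\Delta_B F\bigr)+\bigl((G\smsh G)\Delta_B-\Delta_C G\bigr)F$ gives (3);
\item the freely permuted cross terms give (1).
\end{itemize}

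\textbf{The gap is in (5), where you silently change models.} The metastable statement you invoke is that $B\to\operatorname{hofib}(QB\to QD_2(B))$ is $(3r+1)$-connected, obtained from the Snaith splitting plus Blakers--Massey. That is a statement about the James--Hopf map $j_2\colon QB\to QD_2(B)$, the adjoint of the projection of $\Sigma^\infty QB$ onto its $D_2(B)$ summand. Your $H$, however, is a relative difference class defined one representative $F\colon\Sigma^VA\to\Sigma^VB$ at a time. Before you can feed $H(f)=0$ into that fibration, you need two facts.
\begin{enumerate}
\item[(i)] Your $H$ is given by post-composition with a natural, hence $\pi$-equivariant, map of spaces $h\colon QB\to QD_2(B)$. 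Only then does $H(f)=0$ mean that $h\circ f$ is equivariantly null.
\item[(ii)] The map $h$ agrees with $j_2$, at least in the metastable range, so that the connectivity estimate applies to $\operatorname{hofib}(h)$.
\end{enumerate}
Point (ii), that the geometric Hopf invariant coincides with the James--Hopf invariant, needs a genuine argument. It is exactly the substance of (5) for your model, and you neither prove nor cite it. You must either supply this comparison or prove desuspension directly in the geometric model. One way to do the latter is to remove one trivial suspension coordinate at a time and compare with the unstable EHP sequence, using the filtration of $D_2(B)$ by $(B\smsh B)\smsh_{\mathbb{Z}_2}S(\mathbb{R}^j_-)_+$.

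\textbf{Separately, the cell-by-cell framing does not work as written.} The obstructions to lifting $f$ across $E\colon B\to QB$ on the $n$-cells of $A$ lie in the groups $\pi_{n-1}(\operatorname{hofib}E)$, which are generally nonzero in this range. They also depend on the lifts chosen on lower cells. So the vanishing of the single global class $H(f)$ does not kill them one at a time.

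The correct argument is global. A chosen equivariant null-homotopy of $h\circ f$ lifts $f$ once to $\operatorname{hofib}(h)$. Since $A$ is built from free $\pi$-cells of dimension at most $3r+1$, and $B\to\operatorname{hofib}(h)$ is $(3r+1)$-connected on underlying spaces, every equivariant obstruction group for the remaining lift to $B$ vanishes. Argued this way, your indeterminacy worry disappears, and the Borel-fibration reformulation becomes unnecessary.
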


If $B'$ is a based $\pi$-space, we equip its suspension $\Sigma B'$ with the induced  $\pi$-action in which $\pi$ acts trivially on the suspension coordinate. 

\begin{cor} \label{cor:Hopf-properties} If $B = \Sigma B'$ is a suspension, then $\Delta_B$ is null homotopic and the transfer formula becomes
\[
\tr H(f) = f\cup f \, .
\]
\end{cor}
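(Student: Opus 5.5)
The plan is to deduce Corollary~\ref{cor:Hopf-properties} from the Transfer Formula in Theorem~\ref{thm:Hopf-properties}(2). That formula gives
\[
\tr H(f) = (f\smsh f)\circ \Delta_A - \Delta_B\circ f .
\]
So the statement reduces to two checks: that $\Delta_B$ is equivariantly stably null homotopic when $B=\Sigma B'$, and that $(f\smsh f)\circ\Delta_A$ is, by definition, the equivariant cup product $f\cup f$.

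The second check is immediate from the definition $f\cup g := (f\smsh g)\circ \Delta_A$ given in \eqref{eqn:cup}. One should take care that the representative used in the Transfer Formula matches the explicit representative $(1_B\smsh g)\circ(f\smsh 1_A)\circ\Sigma^j\Delta_A$. These agree in $\{A,B\smsh B\}_\pi$ by functoriality of the smash product of stable maps, so nothing further is required.

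For the first check, I would show that the reduced diagonal $\Delta\colon \Sigma B'\to \Sigma B'\smsh\Sigma B'$ is null homotopic through based $\pi$-equivariant maps, already before stabilization.

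\begin{itemize}
\item Write $\Sigma B' = S^1\smsh B'$, with $\pi$ acting trivially on $S^1$. The diagonal then factors, up to the shuffle homeomorphism $S^1\smsh B'\smsh S^1\smsh B'\cong (S^1\smsh S^1)\smsh(B'\smsh B')$, as $\Delta_{S^1}\smsh\Delta_{B'}$.
\item The reduced diagonal $\Delta_{S^1}\colon S^1\to S^1\smsh S^1 = S^2$ is null homotopic, since $\pi_1(S^2)=0$. Choose a based null homotopy $h_t$.
\item Then $h_t\smsh \Delta_{B'}$ is a based null homotopy of $\Delta_B$. It is $\pi$-equivariant because $\pi$ acts only on the $B'$ factors, $\Delta_{B'}$ is equivariant for the diagonal action, and the shuffle homeomorphism is equivariant.
\end{itemize}

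Hence $\Delta_B\simeq *$ equivariantly, and so $\Delta_B\circ f=0$ in $\{A,B\smsh B\}_\pi$. Substituting both checks into the Transfer Formula gives $\tr H(f)=f\cup f$.

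The only point needing care is equivariance of the null homotopy. This is guaranteed by the convention that $\pi$ acts trivially on the suspension coordinate, and because the homotopy lives entirely on the $S^1$ factors. Cofibrancy is not an issue, since smashing with $S^1$ preserves free $\pi$-cell structures.
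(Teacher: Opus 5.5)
Your proof is correct and follows the same route as the paper. You substitute into the Transfer Formula, use that $\Delta_B$ is equivariantly null homotopic for a suspension, and identify $(f\smsh f)\circ\Delta_A$ with $f\cup f$ by definition, with your factorization $\Delta_{\Sigma B'}\cong\Delta_{S^1}\smsh\Delta_{B'}$ supplying the justification of the nullhomotopy that the paper simply asserts.
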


\begin{proof} If $B$ is a suspension, its reduced diagonal $\Delta_B$ is equivariantly null homotopic. Consequently,
\[
\tr H(f)= (f\smsh f) \circ \Delta_A  =  f\cup f\, . \qedhere
\]
\end{proof}

\section{Kernel modules} \label{sec:kernels}
Let $f\: M \to X$ be a map of connected based homotopy finite spaces. Let $\pi = \pi_1(X)$ and $\Lambda = \Bbb Z[\pi]$. Fix $w\: \pi\to \{\pm 1\}$
and define $\bar g = w(g)g^{-1}$. For a Poincar\'e normal map, $w$ is understood to
be the orientation character of the target.

Let $\tilde X\to X$ be the universal cover.
Define $\tilde M$ to be the pullback $M \times_X \tilde X$. Then $\tilde M \to M$ is a $\pi$-regular covering space and we have a $\pi$-equivariant map $\tilde f \: \tilde M \to \tilde X$.  

Let  
$C_\ast(\tilde M)$  and $C_\ast(\tilde X)$ be the singular chains on $\tilde M$ and $\tilde X$; these are chain complexes of free $\Lambda$-modules. Let $D_f$ be the algebraic mapping cone of 
the induced map $\tilde f_\ast \: C_\ast(\tilde M) \to C_\ast(\tilde X)$.

\begin{defn} The {\it  kernels} of $f$ are the $\Lambda$-modules
\begin{align*}
K_k(M) &= H_{k+1}(\tilde f\: \tilde M \to \tilde X) := H_{k+1}(D_f), \text{and} \\
K^k(M) &= H^{k+1}(\tilde f\: \tilde M \to \tilde X) := H^{k+1}(\hom_{\Lambda}(D_f,\Lambda))\, ,
\end{align*}
where  $\hom_{\Lambda}(D_f,\Lambda)$ is equipped with the structure of cochain complex of left $\Lambda$-modules
using $(a\ell)(z) := \ell(z)\bar a$ for $a\in \Lambda, \ell \in \hom_{\Lambda}(D_f,\Lambda)$.
\end{defn}

We list the fundamental properties of the kernels. For the proofs, see 
\cite[\S\S 10.3-5]{Ranicki_ags}. 

\begin{rems} \label{rems:kernels}
\begin{enumerate}[(1).]
\item Assume $f\: M \to X$ is $k$-connected, $k \ge 2$.
Then by the relative Hurewicz theorem, there is a canonical isomorphism 
\[
K_k(M) \cong  \pi_{k+1}(f)\, .
\]
In defining
$\pi_{k+1}(f)$, we 
fix a basepoint in $M$ and take 
 the set of basepoint preserving homotopy classes of commutative diagrams
\[
\xymatrix{
S^k \ar[r] \ar[d] & M \ar[d]^f \\
D^{k+1}\ar[r] & X\, .
}
\]
Note that an unbased map $\gamma\: S^k \to M$ can be treated as a based map once
a reference path from $\gamma(\ast)$ to the basepoint in $M$ is chosen.

\item Assume $k \ge 2$. Then $f\: M\to X$ is $k$-connected if and only if 
$f$ induces an isomorphism on fundamental groups and $K_i(M)=0$ for $i < k$. If $f$ is $k$-connected then the evaluation map
\[
K^k(M) @>>>  \hom_{\Lambda}(K_k(M),\Lambda) = K_k(M)^\ast
\]
is an isomorphism.
If in addition, $K^i(M) = 0$ for $i > k$, then $K_k(M)$ is a stably free $\Lambda$-module.

In particular, if $f$ is $2$-connected, then $f$ is a homotopy equivalence if and only if $K_i(M) = 0 = K^i(M)$ for all $i\ge 0$.
\end{enumerate}
\end{rems}

More generally, given a map of pairs $f\: (M,A) \to (X,B)$, one has
kernel modules
\[
K_k(M,A) := H_{k+1}(\tilde f) \quad \text{ and } \quad K^k(M,A) := H^{k+1}(\tilde f) \, ,
\]
where the displayed modules are the relative (co-)homology groups of the mapping cone $D_f$  of
the induced  chain map  $\tilde f_\ast \: C_\ast(\tilde M,\tilde A) \to C_\ast(\tilde X,\tilde B)$. These groups
fit into exact sequences
\begin{align}\label{eqn:exact-sequences} \begin{split}
\cdots \to & K_{k+1}(M,A) \to K_k(A) \to K_{k}(M) \to K_{k}(M,A)\to  \cdots   \qquad \text{ and }\\
\cdots \to &  K^{k}(M,A) \to K^{k}(M)   \to K^{k}(A) \to  K^{k+1}(M,A) \to \cdots 
\end{split}
\end{align}

\begin{lem}[{\cite[prop.~10.21, defn.~10.22]{Ranicki_ags}}]  \label{lem:ernel-props} Let $f\: M \to X$ be a normal map of  
Poincar\'e duality spaces of dimension $d$.
\begin{enumerate}[(i).]
\item There
are natural splittings of $\Lambda$-modules
\[
H_\ast(\tilde M) \cong K_\ast(M) \oplus H_\ast(\tilde X) \quad \text{ and } \quad
H^\ast(\tilde M) \cong K^\ast(M) \oplus H^\ast(\tilde X)\, ,
\]
where $H_\ast(\tilde Y) := H_\ast(C_\ast(\tilde Y))$ and  $H^\ast(\tilde Y) := H^\ast(\hom_{\Lambda}(C_\ast(\tilde Y),\Lambda))$.
Here, the right $\Lambda$-module $\hom_{\Lambda}(C_\ast(\tilde Y),\Lambda)$ is converted into a left $\Lambda$-module using the involution $\bar g =w(g)g^{-1}$.

\item  Poincar\'e duality induces an isomorphism
\[
K_i(M) \cong K^{d-i}(M), \quad i \ge 0
\]
(as the pullback of the orientation sheaf to $\tilde M$ is trivial, the orientation twist is incorporated into the involution convention above).

\item Assume $d= 2k$ and $f$ is $k$-connected.  Then the cup product on $H^k(\tilde M)$ combined with Poincar\'e duality induces the {\it intersection pairing}
\[
\lambda\: K_k(M) \times K_k(M) \to \Lambda\, ,
\]
which is a non-singular  $(-1)^k$-symmetric form with $K_k(M)$ stably free \cite[cor.~10.29(iii)]{Ranicki_ags}.
\end{enumerate}
\end{lem}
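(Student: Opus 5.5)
The plan is to follow Ranicki's chain-level treatment \cite[\S\S10.3--5]{Ranicki_ags}, translated into the equivariant setting of the universal cover. Let $\tilde f\:\tilde M\to\tilde X$ be the $\pi$-equivariant lift.

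For (i), the key input is the \emph{umkehr} (transfer) map. Since $f$ is degree one, Poincar\'e duality on $M$ and $X$ gives a $\Lambda$-chain map
\[
f^!\: C_\ast(\tilde X) \simeq C^{d-\ast}(\tilde X) \xrightarrow{\tilde f^\ast} C^{d-\ast}(\tilde M)\simeq C_\ast(\tilde M),
\]
where both duality equivalences are cap products with the fundamental classes, twisted by $w$. Because $f_\ast[M]=[X]$, the naturality of the cap product gives $\tilde f_\ast\circ f^! \simeq \mathrm{id}$. Hence $C_\ast(\tilde X)$ is a chain homotopy retract of $C_\ast(\tilde M)$, and the complementary summand is chain equivalent to the desuspended mapping cone $\Sigma^{-1}D_f$. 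Passing to homology gives the splitting $H_\ast(\tilde M)\cong K_\ast(M)\oplus H_\ast(\tilde X)$ with the indexing $K_k=H_{k+1}(D_f)$. Dualizing with $\hom_\Lambda(-,\Lambda)$ and using the stated involution gives the cohomological splitting. Naturality holds because $f^!$ is canonical up to chain homotopy.

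For (ii), I would check that the duality equivalences for $M$ and $X$ intertwine $\tilde f_\ast$ with $\tilde f^\ast$. This again follows from naturality of the cap product and $f_\ast[M]=[X]$. Consequently they induce a chain equivalence between $\Sigma^{-1}D_f$ and the dual complex $\Sigma^{-1}D_f^{\,d-\ast}$. Taking homology yields $K_i(M)\cong K^{d-i}(M)$. The orientation twist is absorbed by the involution $\bar g=w(g)g^{-1}$, since the orientation sheaf pulls back trivially to $\tilde M$.

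For (iii), take $d=2k$ with $f$ $k$-connected. Rems.~\ref{rems:kernels}(2) gives $K_i(M)=0$ for $i<k$. By (ii), $K^i(M)=0$ for $i>k$, and by the universal coefficient argument $K_i(M)=0$ for $i>k$ as well, so the kernel complex is concentrated in degree $k$. The same remark then shows that $K_k(M)$ is stably free and that $K^k(M)\cong K_k(M)^\ast$.

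Next I define
\[
\lambda(x,y)=\langle \mathrm{PD}^{-1}x \cup \mathrm{PD}^{-1}y,[M]\rangle\in\Lambda,
\]
using the equivariant cup product on $\tilde M$ with values summed over $\pi$. Equivalently, $\hat\lambda$ is the composite
\[
K_k(M)\cong K^k(M)\cong K_k(M)^\ast .
\]
Since this composite consists of two isomorphisms, $\hat\lambda$ is an isomorphism, which gives non-singularity. Graded commutativity of the cup product in degree $k$, together with the involution, gives $\lambda(y,x)=(-1)^k\overline{\lambda(x,y)}$. Sesquilinearity follows from the left-module conventions.

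The main obstacle is bookkeeping rather than anything conceptual. One must verify that the equivariant cap product with $[M]$ is a genuine $\Lambda$-module chain equivalence under the conjugated action, with the $w$-twist correctly placed, so that $f^!$ is $\Lambda$-linear and the symmetry sign comes out as $\epsilon=(-1)^k$ with the right conjugation. Beyond that, the argument is exactly Ranicki's, so I would cite \cite[prop.~10.21, cor.~10.29]{Ranicki_ags} for these verifications.
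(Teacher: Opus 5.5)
Your proposal is correct and is essentially the argument the paper relies on. The paper gives no proof of its own and defers to Ranicki \cite[\S\S10.3--5, prop.~10.21, cor.~10.29]{Ranicki_ags}, whose chain-level argument runs exactly as yours does: the umkehr map $f^!$ gives the splitting, duality passes to the kernel complex, which reduces to a stably free module in degree $k$, and there $\hat\lambda$ is the composite $K_k(M)\cong K^k(M)\cong K_k(M)^\ast$. (Your step ``$K_i(M)=0$ for $i>k$'' is most cleanly obtained by noting that universal coefficients give $K^i(M)=0$ for $i<k$, and then applying the duality of (ii).)
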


\begin{rem} In \S\ref{sec:kernel-form}, we will provide an alternative homotopy-theoretic construction of $\lambda$.
\end{rem}

\begin{cor} Suppose $f \: M^d \to X^d$ is a $(\lfloor\frac{d}{2}\rfloor + 1)$-connected normal map, $d \ge 2$. Then $f$ is a homotopy equivalence.
\end{cor}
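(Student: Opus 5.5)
The plan is to use the kernel modules and Poincar\'e duality from Lemma~\ref{lem:ernel-props} together with the criterion in Remarks~\ref{rems:kernels}(2). Set $r = \lfloor \frac{d}{2}\rfloor + 1$. Since $d\ge 2$, we have $r \ge 2$, so the hypothesis that $f$ is $r$-connected says two things by Remarks~\ref{rems:kernels}(2). First, $f$ induces an isomorphism on fundamental groups. Second, $K_i(M) = 0$ for all $i < r$, that is, for all $i \le \lfloor d/2\rfloor$. By the same remark, it then suffices to show that $K_i(M) = 0 = K^i(M)$ for every $i \ge 0$.

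We first treat the cohomology kernels. Lemma~\ref{lem:ernel-props}(ii) gives $K^{j}(M) \cong K_{d-j}(M)$. If $j \ge \lceil d/2\rceil$, then $d - j \le \lfloor d/2 \rfloor < r$, so $K^j(M) = 0$. For small $j$ we use a universal coefficient argument on the mapping cone $D_f$. The complex $D_f$ is a chain complex of free $\Lambda$-modules whose homology $H_{i+1}(D_f) = K_i(M)$ vanishes for $i < r$. It is therefore chain equivalent to a free complex concentrated in degrees $\ge r+1$. It follows that $H^{j+1}(\hom_\Lambda(D_f,\Lambda)) = K^j(M)$ vanishes for $j < r$. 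For $d \ge 2$ the two ranges $j<r$ and $j \ge \lceil d/2\rceil$ together cover all $j \ge 0$, so $K^j(M)=0$ for every $j$.

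Now we return to the homology kernels. Duality gives $K_i(M) \cong K^{d-i}(M) = 0$ for all $i \le d$. For $i > d$ we have $K_i(M) = H_{i+1}(D_f)$, and this vanishes because $M$ and $X$ are homotopy finite of dimension $d$. To see this, note that Poincar\'e duality gives $H_{>d}(\tilde M; \cdot)=0$ with any coefficients, and likewise for $\tilde X$. The only remaining case is $i = d$, which is already covered by duality, since $K_d(M) \cong K^0(M) = 0$. This settles all $i$, and Remarks~\ref{rems:kernels}(2) then shows that $f$ is a homotopy equivalence.

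The one step that needs care is the vanishing of $K^j$ in low degrees. A homology vanishing statement does not automatically dualize to cohomology. The argument works here because the complexes are free and bounded below, so acyclicity in a range lets one split off a contractible piece, and this is where the freeness of $C_\ast(\tilde M)$ and $C_\ast(\tilde X)$ is used.
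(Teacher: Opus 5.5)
Your argument is correct and matches the reasoning the paper leaves implicit. The paper states the corollary without proof, as an immediate consequence of Remarks~\ref{rems:kernels}(2) and the duality isomorphism of Lemma~\ref{lem:ernel-props}(ii): connectivity kills the kernels in degrees below $\lfloor d/2\rfloor+1$, duality kills the rest, and the two ranges cover everything. One harmless imprecision: truncating $D_f$ in low degrees produces a projective, not necessarily free, module in the bottom degree, and projectivity is all the cohomology vanishing needs.
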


Similarly, if $f\: (M^d,\partial M^d) \to (X^d,\partial X^d)$ is a normal map of Poincar\'e pairs, then we obtain duality isomorphisms 
\[
K^\ast(M,\partial M) \cong K_{d-\ast}(M)\quad  \text{ and } \quad K^\ast(M) \cong K_{d-\ast}(M,\partial M) \, .
\]

\section{\texorpdfstring
{The kernel form $(K_k(M),\lambda,\mu)$}
{The kernel form}}\label{sec:kernel-form}

Fix Spivak normal structures $(M^d,\xi_M,\alpha_M)$ and $(X^d,\xi,\alpha)$.
Let  
\[
f\: M^d \to X^d
\] 
be a  normal map.

\subsection{The function $\mu$} 
Assume $f\: M \to X$ is $j$-connected with $2 \le j  \le \lfloor d/2\rfloor$.
Then $x\in K_j(M)$ is represented
by a commutative diagram
\[
\xymatrix{
S^j \ar[r]^\gamma \ar[d] & M \ar[d]^f\\
D^{j+1} \ar[r] & X\, .
}
\]
Then $\gamma^\ast \xi_M \simeq  (f\circ \gamma)^\ast\xi$ extends in a preferred way to $D^{j+1}$ and therefore 
defines a trivialization. It follows that $\gamma\: S^j \to M$ is represented by a codimension zero Poincar\'e immersion
$S^j \times D^{d-j}\to M$.

\begin{defn}
The element $x\in K_j(M)$ is {\it represented by a Poincar\'e embedding} if the above Poincar\'e immersion 
is concordant to a Poincar\'e embedding.
\end{defn}

Now assume that $d = 2k$ and  $f \: M^{2k}\to X^{2k}$ is $k$-connected. The module $K_k(M)$ is finitely generated and stably free
\cite[cor.~10.29(iii)]{Ranicki_ags}; the kernel form below
is understood in this sense.
Moreover, the only possibly non-zero homology kernel is $K_k(M)$. 
The following is a non-simply connected version of  \cite[thm.~A]{Klein_sphere}. 
Recall the abelian group $Q_k = Q_k(\pi,w)$ of \eqref{eqn:Qk}. Let $\epsilon = (-1)^k$.

\begin{thm}\label{thm:Klein_sphere} Assume in addition $k \ge 4$.\footnote{This corrects the statement of \cite[thm.~A]{Klein_sphere}, where it is asserted that
 $k\ge 3$ suffices.}
   Then there is a function
 \[
\mu\:  K_k(M)  \to Q_k
 \]
 such that $\mu(x) = 0$ if and only if $x$ is represented by a codimension zero Poincar\'e embedding
 $S^k \times D^k \to M$. Moreover, $\mu$ satisfies
 \begin{enumerate}[(i).]
 \item $\mu(x+y) = \mu(x) + \mu(y) + \lambda(x,y)$,
 \item $\lambda(x,x)= s_{\epsilon}(\mu(x))$,
 \item $\mu(ax) = a\mu(x) \bar a$, for $a \in \Lambda$.
 \end{enumerate}
  \end{thm}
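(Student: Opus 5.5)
Given $x\in K_k(M)\cong\pi_{k+1}(f)$, represented by $\gamma\colon S^k\to M$ with a null homotopy of $f\circ\gamma$, we have the canonical Poincar\'e immersion $S^k\times D^k\to M$. By Theorem~\ref{thm:Smale-Hirsch} it is a Poincar\'e embedding $S^k\times D^{k+j}\to M\times D^j$ for some $j$, with complement $C_j$. The plan is to define $\mu(x)$ as the obstruction to compressing this immersion. Collapsing the complement gives a Pontryagin--Thom map $M\times D^j/\partial\to (S^k\times D^{k+j})/(S^k\times S^{k+j-1})\simeq \Sigma^{k+j}S^k_+$. On universal covers, and after $S$-duality (using the Spivak structure and its trivialization over $S^k$), this yields an equivariant stable map
\[
\phi_x\colon \Sigma^\infty(\pi_+\smsh S^k)\to \Sigma^\infty\Sigma^{k}\tilde M_+
\]
(or, more conveniently, one into the Thom space of $\tilde M$). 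Its stable class is $x$ up to duality. If the immersion compresses, the collapse exists unstably with $j=0$, so $\phi_x$ desuspends.

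Define $\mu(x)$ from the equivariant stable Hopf invariant $H(\phi_x)\in\{\pi_+\smsh S^{2k},D_2(\tilde M^{\nu})\}_\pi$, projected to its middle-dimensional part. One uses that $f$ is $k$-connected and that $\tilde M\to\tilde X$ splits off the kernel, as in Lemma~\ref{lem:ernel-props}(i). The relevant group is $H_{2k}$ of $D_2$ of a wedge of free $\pi$-spheres $\pi_+\smsh S^k$. Then $\pi_0$ of $(S^k\smsh S^k)_{h\Bbb Z_2}$ with the sign action is $\Bbb Z$ or $\Bbb Z_2$, which assembles over $\pi\times\pi$ modulo the swap into $\Lambda/\langle r-\epsilon\bar r\rangle=Q_k$. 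Independence of choices (reference paths, the stabilization $j$) follows from the composition formula and from vanishing on unstable classes. The normalization $\mu(ax)=a\mu(x)\bar a$ comes from equivariance and the involution convention. Property (i) is the Cartan formula in Theorem~\ref{thm:Hopf-properties}, since $\phi_{x+y}=\phi_x+\phi_y$, once one identifies $\phi_x\cup_2\phi_y$ with the image of $\lambda(x,y)$ in $Q_k$. This identification is essentially the homotopical definition of $\lambda$ via cup products and duality, as in \cite{Klein-Williams_Intersection1}. Property (ii) is the transfer formula, Corollary~\ref{cor:Hopf-properties}, applied in the suspension range: $\tr H(\phi_x)=\phi_x\cup\phi_x$, which represents $\lambda(x,x)$, and $\tr$ induces $s_\epsilon$ on $Q_k\to\Lambda$.

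For the vanishing criterion, one direction follows from vanishing on unstable classes: an embedding gives a desuspended collapse, so $\mu(x)=0$. For the converse, suppose $\mu(x)=0$. First show that the full Hopf invariant $H(\phi_x)$ vanishes, not just its $Q_k$-component. Using the $k$-connectivity of $f$ and dimension counting, the remaining components land in groups that are zero or detected by the $X$-part, which is trivial because of the null homotopy in $X$. Then apply the desuspension theorem (Theorem~\ref{thm:Hopf-properties}(5)) iteratively, one suspension at a time, to obtain an unstable equivariant collapse map of the right form. The target is $(k-1)$-connected relative data and the source has dimension about $2k$, so the hypothesis $2k\le 3(k-1)+1$ roughly requires $k\ge 2$. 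The extra margin needed for the relative and complement versions is where $k\ge4$ enters.

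Finally, convert the desuspended collapse into a compression of the immersion from $M\times D^j$ to $M\times D^{j-1}$, using the Poincar\'e embedding/compression machinery of \cite{Klein_compression}, \cite{Klein_compression2}. Note that Theorem~\ref{thm:Whitney} itself needs $d\ge 2p+1$ and fails here, since $p=k$ and $d=2k$. The main obstacle is this step: showing that in the metastable-failing middle dimension, the only obstruction to compression is the Hopf invariant of the collapse, i.e.\ that a desuspended collapse map, together with the complement's duality, really produces a codimension-zero Poincar\'e embedding. My first attempt would be to construct the complement $C_{j-1}$ as a homotopy fiber/pushout using \cite{Klein_haef}-style equivariant Haefliger arguments. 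I would then check Poincar\'e duality of $(C_{j-1},S^k\times S^{k+j-2})$ via the universal cover and the vanishing of the Hopf obstruction.
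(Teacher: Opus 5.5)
There is a genuine gap, and it is at the start: you put the Hopf invariant on the $S$-dual of the collapse rather than on the collapse itself. The compression criterion concerns whether the equivariant collapse $\tilde\varphi^!\in\{\tilde M_+,\tilde P/\partial\tilde P\}_\pi$, with $(P,\partial P)=(S^k\times D^k,S^k\times S^{k-1})$, lies in the image of $E$. Desuspendability is not invariant under $S$-duality, so your step ``the collapse desuspends, hence $\phi_x$ desuspends'' is unjustified.

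Worse, the dual map carries no information. Nonequivariantly, up to a shift by $d$, the $S$-dual of a codimension zero collapse $M_+\to P/\partial P$ is the map of Thom spectra $P^{\nu_P}\to M^{\nu_M}$ induced by the underlying map and its normal data. Here that is the Thomification of the core map $\gamma$, which is induced by a map of spaces once the normal data are destabilized. With the source and target you actually wrote, $\pi_+\smsh S^k\to\Sigma^k\tilde M_+$, you are even in the Freudenthal range: the target is $(k-1)$-connected and the source is a single free $k$-cell. So every class desuspends and $H(\phi_x)=0$ for all $x$. The group $\{\pi_+\smsh S^{2k},D_2(\tilde M^\nu)\}_\pi$ you then use is not where $H$ of such a map lives. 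Your $\mu$ is therefore either identically zero or not well specified, and the converse direction cannot work.

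In the paper the source is $\tilde M_+$, of dimension $2k$, and the target $\tilde P/\partial\tilde P\cong\Sigma^k(S^k_+)\smsh\pi_+$ is a $(k-1)$-connected suspension. This is exactly the metastable, non-Freudenthal situation $2k\le 3(k-1)+1$. There $H(\tilde\varphi^!)$ is the complete obstruction to desuspension (Theorem~\ref{thm:Hopf-properties}(5)), and the suspension structure lets Corollary~\ref{cor:Hopf-properties} give (ii).

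Lemma~\ref{lem:Klein_sphere} then identifies the whole group $\{\tilde M_+,D_2(\tilde P/\partial\tilde P)\}_\pi$ with $Q_k$:
$D_2(\tilde P/\partial\tilde P)$ is $(2k-1)$-connected with $\pi_{2k}\cong\Bbb Z[\pi\times\pi]_{\Bbb Z_2}$; obstruction theory and Poincar\'e duality reduce to $H_0$; and one uses the explicit map $g\mapsto w(g)\otimes(g,1)$. So there is no middle-dimensional projection, no ``remaining components'' to kill, and no iterated desuspension.

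The step you flag as the main obstacle is turning an unstable equivariant collapse into a compression in the middle dimension, where Theorem~\ref{thm:Whitney} fails. It should not be rebuilt by Haefliger-type constructions of the complement. It is precisely \cite[thm.~E]{Klein_compression}: the immersion is concordant to a Poincar\'e embedding if and only if $\tilde\varphi^!$ is in the image of $E$. Its hypotheses are where $k\ge 4$ is used (compare the proof of Lemma~\ref{lem:disjoint-embedding}). Without that input your argument does not close.

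For (iii), the paper applies the composition formula to the unstable translation $\rho_a$. This gives $H(\rho_a\circ\tilde\varphi^!)=D_2(\rho_a)\circ H(\tilde\varphi^!)$, which corresponds to $a\mu(x)\bar a$ under the lemma's isomorphism.
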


\begin{rem} In (i) above, $\lambda(x,y)$ is to be interpreted in $Q_k$ by taking
its image under the quotient homomorphism $\Lambda \to Q_k$.  
\end{rem}

\begin{proof}[Proof of Theorem \ref{thm:Klein_sphere}] Let $x\in K_k(M)$. Then $x$ is represented by an interior Poincar\'e immersion
$\varphi\: S^k \times D^k \to M$ (i.e., relative to $\partial_0 (S^k \times D^k) := \emptyset$). Let $(P,\partial P) = (S^k \times D^k, S^k \times S^{k-1})$.
The prescribed immersion determines a stable collapse
\[
\varphi^!\in\{M_+,P/\partial P\}.
\]
Pulling the stabilized embedding diagram back along the
universal cover gives its equivariant refinement
\[
\tilde\varphi^!\in
\{\tilde M_+,\tilde P/\partial\tilde P\}_{\pi},
\]
as constructed in \cite[\S1.3]{Klein_compression}.
Here $\tilde P=P\times_M\tilde M$ and
$\partial\tilde P=\partial P\times_M\tilde M$. By \cite[thm.~E]{Klein_compression} the Poincar\'e immersion is concordant to a Poincar\'e embedding if and only if
$\tilde\varphi^!$ lies in the image of the suspension map
\[
E\: [\tilde M_+, \tilde P/\partial \tilde P]_{\pi}\to  \{ \tilde M_+ , \tilde P/\partial \tilde P\}_{\pi}\, .
\]

As $k \ge 4$ implies $2k \le 3(k-1)+1$, the complete obstruction to desuspending
is given by the $\pi$-equivariant stable Hopf invariant 
\[
H(\tilde \varphi^!) \in \{\tilde M_+, D_2(\tilde P/\partial \tilde P)\}_\pi
\]
\cite[Add.~D]{klein2026stablehopf}.

By Lemma \ref{lem:Klein_sphere}  below, there is a canonical isomorphism
\[
\{\tilde M_+, D_2(\tilde P/\partial \tilde P)\}_\pi \cong Q_k\, .
\]
With respect to this isomorphism, we set $\mu(x) := H(\tilde \varphi^!)$. This gives the construction of $\mu$. 
The listed  properties are a direct consequence of Theorem \ref{thm:Hopf-properties}: 
Property (i) follows from the Cartan formula. Property (ii) 
follows from the transfer formula  and Corollary \ref{cor:Hopf-properties}, as $\tilde P/\partial \tilde P = \Sigma^k (S^k \times \pi)_+$ is a suspension. We leave the verification of these to the reader.
As to property  (iii), a straightforward calculation shows 
properties (i) and (ii) imply it  for all $a\in \Lambda$ provided that it holds for $a \in \pi$. We will verify the latter.

Let  $a\in \pi$.  Then $ax$ corresponds to the composition 
\[
\tilde M_+ @> \tilde \varphi^! >> \tilde P/\partial \tilde P @> \rho_a >> \tilde P/\partial \tilde P\, ,
\]
where  the second displayed self-map on
$\tilde P/\partial \tilde P = (S^k \smsh (S^k_+)) \smsh (\pi_+)$ is defined by $\rho_a(z,h) = (z,ha^{-1})$.
Since $\rho_a$ is unstable, we infer that $H(\rho_a)$ is trivial.
Consequently, the composition formula of Theorem  \ref{thm:Hopf-properties} implies
\[
\mu(ax) = H(\rho_a\circ \tilde \varphi^!) = D_2(\rho_a) \circ H(\tilde\varphi^!)\, .
\]
With respect to the isomorphism of Lemma \ref{lem:Klein_sphere},
$D_2(\rho_a) \circ H(\tilde \varphi^!) = a\mu(x) \bar a$.
\end{proof}

\begin{lem} \label{lem:Klein_sphere} With $M$ and $P$ as in the proof of Theorem \ref{thm:Klein_sphere},
there is an isomorphism 
\[
\{\tilde M_+, D_2(\tilde P/\partial\tilde P)\}_\pi \cong Q_k
\]
which is canonical up to a choice of $(L,[M])$.
\end{lem}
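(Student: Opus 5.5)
We first identify the target. Since $\tilde P = P\times_M \tilde M$ with $P = S^k\times D^k$ simply connected and $\tilde M\to M$ a regular $\pi$-cover, $\tilde P \cong P\times \pi$. Hence
\[
\tilde P/\partial \tilde P \simeq \Sigma^k (S^k_+)\smsh \pi_+ \simeq (S^k \vee S^{2k})\smsh \pi_+ .
\]
Then $D_2(\tilde P/\partial\tilde P)$ is a $\pi$-space with the diagonal action, and its bottom cells lie in dimension $2k$. Concretely, $(\tilde P/\partial\tilde P)^{\smsh 2}$ has bottom part $S^{2k}\smsh (\pi\times\pi)_+$. The $\Bbb Z_2$ swap acts on this by $(g,h)\mapsto (h,g)$ on the indices, and by $(-1)^k$ times the orientation twist on the sphere. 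The higher cells ($S^k\smsh S^{2k}$ terms and the $E\Bbb Z_2$ cells) start in dimension $\ge 2k+1$.

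Since $\tilde M_+$ is a free $\pi$-complex of dimension $2k$, only the $2k$-skeleton of the target matters up to a cokernel from the $(2k+1)$-cells. So I would compute
\[
\{\tilde M_+, D_2(\tilde P/\partial\tilde P)\}_\pi \cong H^{2k}_\pi\bigl(\tilde M_+; \pi_{2k}^s D_2(\tilde P/\partial\tilde P)\bigr),
\]
via the equivariant stable Hopf/Hurewicz theorem, i.e.\ an Atiyah–Hirzebruch collapse in the range $\dim \tilde M\le$ connectivity. This uses that the target is $(2k-1)$-connected and the source has dimension $2k$.

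Next I would compute the coefficient $\Lambda$-bimodule $A=\pi_{2k}^s D_2$. The group $\pi_{2k}$ of $(S^{2k}\smsh(\pi\times\pi)_+)_{h\Bbb Z_2}$ is the coinvariants $H_0(\Bbb Z_2; \Bbb Z[\pi\times\pi]^{\pm})$. Here the sign is $\epsilon$ together with the swap. The $(2k+1)$-cells of $D_2$, coming from $E\Bbb Z_2$, impose exactly the relation $u\sim \epsilon T u$. So
\[
A = \Bbb Z[\pi\times\pi]/\langle u - \epsilon Tu\rangle,
\]
with $\pi$ acting diagonally.

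Then I would apply equivariant Poincaré duality for $M$ using $(L,[M])$. This gives
\[
H^{2k}_\pi(\tilde M;A)\cong H_0(M;A\otimes L) = A_\pi,
\]
the coinvariants under the diagonal action twisted by $w$. This is where the choice of $(L,[M])$ enters, and it explains the "canonical up to" clause.

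Finally, $\Bbb Z[\pi\times\pi]_\pi\cong \Lambda$ via $(g,h)\mapsto g^{-1}h$ (twisted by $w$). Under this identification the swap becomes the involution $r\mapsto \bar r$, so $A_\pi\cong\Lambda/\langle r-\epsilon\bar r\rangle = Q_k$.

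The main obstacle is bookkeeping. One must check that the $\Bbb Z_2$-sign in the homotopy orbit is exactly $\epsilon=(-1)^k$ combined with $w$. One must also verify that the Atiyah–Hirzebruch/Hurewicz identification is valid in this equivariant stable setting: the $2k$-dimensional free source maps to a $(2k-1)$-connected target, and the $(2k+1)$-skeleton of $D_2$ contributes only the stated relations. In particular, we need the cokernel of $\pi_{2k+1}$-type contributions to vanish on $H^{2k}$.
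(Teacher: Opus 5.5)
Your proposal is correct and follows the paper's proof essentially step for step. You identify the bottom group $\pi_{2k}D_2(\tilde P/\partial\tilde P)\cong\Bbb Z[\pi\times\pi]_{\Bbb Z_2}$ (swap with sign $(-1)^k$), use obstruction theory to get $H^{2k}(M;\Bbb Z[\pi\times\pi]_{\Bbb Z_2})$, apply Poincar\'e duality with $(L,[M])$ to reach $\Bbb Z^w\otimes_{\Lambda}\Bbb Z[\pi\times\pi]_{\Bbb Z_2}$, and identify this with $Q_k$. The differences are cosmetic: your map $(g,h)\mapsto w(g)g^{-1}h$ differs from the paper's $[u,v]\mapsto w(u)[v^{-1}u]$ only by the automorphism of $Q_k$ induced by $g\mapsto g^{-1}$, and your closing worry about $\pi_{2k+1}$-contributions is moot, since $M$ has dimension $2k$ and the $E\Bbb Z_2$-cells are already absorbed into the coinvariants.
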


\begin{proof} The $\pi$-space $\tilde P/\partial \tilde P$
is identified with
\[
(S^k \times D^k \times \pi)/(S^k \times S^{k-1} \times \pi)  = (S^k_+ \smsh S^k) \smsh \pi_+\, .
\]
Hence, the inclusion $S^k \to (S^k_+ \smsh S^k)$ induces a $(3k-1)$-connected map
\[
D_2(S^k \smsh \pi_+) \to D_2(\tilde P/\partial \tilde P)\, .
\]
We infer that $D_2(\tilde P/\partial \tilde P)$
is $(2k-1)$-connected, with first non-trivial homotopy group
\[
\pi_{2k}(D_2(\tilde P/\partial \tilde P))  \cong \Bbb Z[\pi \times \pi]_{\Bbb Z_2}\, ,
\]
where the latter is an isomorphism of left $\Lambda$-modules; the $\Lambda$-module structure on the right is induced
by the diagonal action of $\pi$ on $\pi \times \pi$ defined by left translation, and 
$\Bbb Z_2$ acts on $\Bbb Z[\pi \times \pi]$ by sending a basis element $(g,h)\in \pi \times \pi$ to  $(-1)^k(h,g)$.

Elementary obstruction theory provides an isomorphism 
\[
\{\tilde M_+, D_2(\tilde P/\partial \tilde P)\}_\pi \cong H^{2k}(M;  \Bbb Z[\pi \times \pi]_{\Bbb Z_2})\, ,
\]
where the target denotes the cohomology  in degree $2k$ of $M$ with coefficients in the local coefficient module $ \Bbb Z[\pi \times \pi]_{\Bbb Z_2}$.
Poincar\'e duality provides an isomorphism
\[
\cap [M]\: H^{2k}(M;  \Bbb Z[\pi \times \pi]_{\Bbb Z_2}) \cong H_0(M; \Bbb Z^w\otimes  \Bbb Z[\pi \times \pi]_{\Bbb Z_2})\, ,
\]
where $\Bbb Z^w$ is the local coefficient right $\Lambda$-module corresponding to the orientation sheaf $L$. Its underlying abelian group
is $\Bbb Z$ and $\Lambda$ acts on the right by $1\cdot g := w(g)$ for $g\in \pi$.

By the definition of twisted homology in degree zero, we infer
\[
H_0(M; \Bbb Z^w\otimes  \Bbb Z[\pi \times \pi]_{\Bbb Z_2}) \cong \Bbb Z^w\otimes_{\Bbb Z[\pi]}  \Bbb Z[\pi \times \pi]_{\Bbb Z_2}\, .
\]
Finally,  one has an isomorphism 
\[
 Q_k @> \cong >> \Bbb Z^w\otimes_{\Bbb Z[\pi]}  \Bbb Z[\pi \times \pi]_{\Bbb Z_2}
 \]
which is induced on group elements by $g\mapsto  w(g)\otimes (g,1)$. Its inverse is given by $1 \otimes [u,v] \mapsto w(u)[v^{-1}u]$.
Assembling the above isomorphisms, we infer
\begin{align*}
\{\tilde M_+, D_2(\tilde P/\partial\tilde P)\}_\pi & \cong H^{2k}(M;  \Bbb Z[\pi \times \pi]_{\Bbb Z_2}) \, ,\\
&\cong H_0(M; \Bbb Z^w\otimes  \Bbb Z[\pi \times \pi]_{\Bbb Z_2}) \, , \\
 & \cong \Bbb Z^w\otimes_{\Bbb Z[\pi]}  \Bbb Z[\pi \times \pi]_{\Bbb Z_2} \, , \\
&\cong Q_k\, . \qedhere
\end{align*}
\end{proof}

\subsection{Alternative description of $\lambda$}  \label{subsec:another} The above argument also establishes a canonical isomorphism
\begin{equation} \label{eqn:another}
\{\tilde M_+, \tilde P/\partial\tilde P \smsh \tilde P/\partial\tilde P\}_\pi \cong \Lambda\, .
\end{equation}
With respect to this isomorphism, an alternative description
of the intersection pairing $\lambda\: K_k(M) \times K_k(M) \to \Lambda$ is  as follows:
Represent classes $x,y\in K_k(M)$ by Poincar\'e immersions $\varphi,\psi\: S^k \times D^k \to M$
and set
\[
\lambda(x,y) = \tilde \varphi^!\cup \tilde \psi^!\in \{\tilde M_+, \tilde P/\partial\tilde P \smsh \tilde P/\partial\tilde P\}_\pi \, ,
\]
where $\tilde \varphi^! \cup \tilde \psi^!$ is the equivariant cup product  of $\tilde \varphi^!$ and $\tilde \psi^!$ \eqref{eqn:cup}. By 
\eqref{eqn:another} and Lemma \ref{lem:cup-prop}, this construction defines a $(-1)^k$-symmetric form. We leave it to the reader
to show that the current description coincides with that of Lemma \ref{lem:ernel-props}.

\subsection{Making the Poincar\'e embeddings disjoint}

Throughout this subsection, $f\:M^{2k}\to X^{2k}$ is a
$k$-connected Poincar\'e normal map between connected closed
Poincar\'e spaces, and $k\ge4$. Set
$\pi=\pi_1(M)\cong\pi_1(X)$ and $\Lambda=\Bbb Z[\pi]$, and put
\[
(V,\partial V)=(S^k\times D^k,S^k\times S^{k-1}),
\qquad
(P,\partial P)=\coprod_{i=1}^r(V,\partial V).
\]

\begin{prop}\label{prop:disjoint-embedding}
Suppose that $x_1,\dots,x_r\in K_k(M)$ satisfy
\[
\mu(x_i)=0 \qquad\text{and}\qquad \lambda(x_i,x_j)=0
\]
for $1\le i,j\le r$. Then there is an interior Poincar\'e
embedding $P\to M$ whose restriction to the $i$-th summand
represents $x_i$.
\end{prop}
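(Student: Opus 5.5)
The plan is to treat the disjoint union $P=\coprod_{i=1}^r V$ as a single Poincaré space and to run the argument of Theorem~\ref{thm:Klein_sphere} for $P$ in place of $V$. First, by Theorem~\ref{thm:Smale-Hirsch}, the map $\gamma=\coprod\gamma_i\colon P\to M$ underlies a Poincaré immersion $\varphi\colon P\to M$. Here $\gamma_i$ represents $x_i$ on the core $S^k$, and the trivialization of $\gamma^\ast\xi_M$ comes from the null homotopies $D^{k+1}\to X$. Its restriction to the $i$-th summand is the immersion $\varphi_i$ representing $x_i$. Applying \cite[thm.~E]{Klein_compression}, it suffices to show that the equivariant stable collapse
\[
\tilde\varphi^!\in\{\tilde M_+,\tilde P/\partial\tilde P\}_\pi
\]
desuspends. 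Since $\tilde P/\partial\tilde P=\bigvee_{i=1}^r \tilde V/\partial\tilde V$, this class is the sum $\sum_i \iota_i\circ\tilde\varphi_i^!$, where $\iota_i$ is the inclusion of the $i$-th wedge summand.

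The target $\tilde P/\partial\tilde P$ is $(k-1)$-connected and $\dim \tilde M_+\le 2k\le 3(k-1)+1$ because $k\ge 4$. Hence by the desuspension property of Theorem~\ref{thm:Hopf-properties} it suffices to prove $H(\tilde\varphi^!)=0$. We compute this using the Cartan formula, the composition formula, and the fact that each $\iota_i$ is unstable, so $H(\iota_i)=0$. This gives
\[
H(\tilde\varphi^!)=\sum_i D_2(\iota_i)\circ H(\tilde\varphi_i^!)+\sum_{i<j}(\iota_i\circ\tilde\varphi_i^!)\cup_2(\iota_j\circ\tilde\varphi_j^!).
\]
By Lemma~\ref{lem:Klein_sphere}, $H(\tilde\varphi_i^!)$ corresponds to $\mu(x_i)=0$, so the first sum vanishes. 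For the cross terms, Lemma~\ref{lem:cup-prop}(iii) rewrites each as the image of $(\iota_i\smsh\iota_j)\circ(\tilde\varphi_i^!\cup\tilde\varphi_j^!)$ under $\tilde P/\partial\tilde P\smsh \tilde P/\partial\tilde P\to D_2(\tilde P/\partial\tilde P)$. By \S\ref{subsec:another}, the class $\tilde\varphi_i^!\cup\tilde\varphi_j^!$ corresponds to $\lambda(x_i,x_j)=0$ under \eqref{eqn:another}. Thus every cross term vanishes and $H(\tilde\varphi^!)=0$. Consequently $\tilde\varphi^!=E(g)$ for some unstable $g$, and the immersion compresses to an interior Poincaré embedding $P\to M$. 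The compression is a concordance, so it does not change the restriction to each summand up to concordance, and the $i$-th summand still represents $x_i$.

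The main obstacle is the bookkeeping for the wedge. One must check that the identifications of Lemma~\ref{lem:Klein_sphere} and \eqref{eqn:another} extend to the wedge target, and that they are compatible with the maps induced by $\iota_i$ and $\iota_i\smsh\iota_j$. Concretely, $\{\tilde M_+,D_2(\bigvee_i\tilde V/\partial\tilde V)\}_\pi$ splits, via $(3k-1)$-connectivity and Poincaré duality as in Lemma~\ref{lem:Klein_sphere}, as $\bigoplus_i Q_k\oplus\bigoplus_{i<j}\Lambda$. This splitting comes from the decomposition $D_2(A\vee B)\simeq D_2A\vee D_2B\vee (A\smsh B)$, and the computation above is exactly its coordinatewise form. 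I would first verify this splitting, and then check that the factor indexed by $i<j$ receives the cup product term without any extra twist.
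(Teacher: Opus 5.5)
Your argument is correct, but it takes a different route from the paper's.

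\textbf{The paper's route.} The paper argues by induction on $r$. Having embedded $W=\coprod_{i<r}V_i$ with complement $C$, it uses $\pi_k(M,C)\cong\Lambda^{r-1}$ (relative Hurewicz plus excision). The coordinates of an embedded representative $\psi$ of $x_r$ in this group are the numbers $\lambda(x_i,x_r)$. Since these vanish, the core of $\psi$ can be homotoped into $C$. Lemma~\ref{lem:disjoint-embedding} then shows that the resulting immersion into $C$ compresses. There, the collapse onto $\tilde C/\partial\tilde C$ is desuspended by a $0$-Cartesian square of equivariant mapping spaces, starting from the already unstable collapse of $\psi$ in $M$. Finally \cite[thm.~E]{Klein_compression} is applied inside the Poincar\'e pair $(C,\partial C)$. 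Thus the paper uses the Hopf invariant only one sphere at a time, through Theorem~\ref{thm:Klein_sphere}. It disposes of the cross terms by a homotopy-theoretic analogue of pushing one sphere off the others.

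\textbf{Your route.} You do everything in one step. The Cartan and composition formulas show that $H$ of the collapse of the whole union has diagonal pieces $D_2(\iota_i)\circ H(\tilde\varphi_i^!)$, governed by $\mu(x_i)$. Its off-diagonal pieces are images of $\tilde\varphi_i^!\cup\tilde\varphi_j^!$, governed by $\lambda(x_i,x_j)$ via \S\ref{subsec:another} and Lemma~\ref{lem:cup-prop}(iii). This makes transparent that $(\lambda,\mu)$ together is exactly the Hopf-invariant obstruction to embedding $P$.

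\textbf{Trade-offs.} Your route is shorter and uses only the machinery of \S\ref{sec:Hopf}. In exchange, it needs Theorem~\ref{thm:Smale-Hirsch} and \cite[thm.~E]{Klein_compression} for a disconnected source. The paper needs these only for a single $V$, though in an ambient pair with boundary. Your route also needs the uniqueness half of the Smale--Hirsch classification \cite[thm.~A]{Klein_immersion}: the restriction of your immersion of $P$ to the $i$-th summand must be concordant to the immersion used to define $\mu(x_i)$. The definition of $\mu$ already presupposes this, so it is not an extra gap. Both arguments lean on the identification of the equivariant cup product with $\lambda$, which the paper leaves to the reader.

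\textbf{The obstacle you flag.} Your final concern is not actually needed. Every summand in your formula for $H(\tilde\varphi^!)$ vanishes before it is pushed forward along $D_2(\iota_i)$ or along $\iota_i\smsh\iota_j$ followed by $B\smsh B\to D_2(B)$. So you never have to compute $\{\tilde M_+,D_2(\bigvee_i\tilde V/\partial\tilde V)\}_\pi$ or verify its splitting. That computation would matter only for the converse statement, that an embedding of $P$ forces $\mu(x_i)=0=\lambda(x_i,x_j)$.
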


We first establish the following complement lemma. All mapping
spaces in its proof are derived, based, equivariant mapping spaces;
stabilization is taken in the naive equivariant sense.

\begin{lem}\label{lem:disjoint-embedding}
Let $W\to M$ be an interior Poincar\'e embedding with complement
$C$, where $W$ has homotopy spine dimension at most $k$. Let
$\psi\:V\to M$ be another interior Poincar\'e embedding. Suppose
that a homotopy is given from $\psi$ to a composite
\[
V\xrightarrow{\widehat\psi}C\xrightarrow{j}M.
\]
Then $\widehat\psi$ underlies an interior Poincar\'e embedding
with the stable normal data transported from $\psi$ along the
given homotopy.
\end{lem}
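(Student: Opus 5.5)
The plan is to reduce the statement to a desuspension problem inside the complement $C$ and then transport the vanishing of the obstruction from $M$ to $C$ via the collapse map of $W$. Throughout, $V=S^k\times D^k$, $\dim M=2k$ and $k\ge 4$, as fixed at the start of this subsection.

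\emph{Step 1: $\widehat\psi$ underlies a Poincar\'e immersion.} Since $W$ has homotopy spine dimension at most $k$ in a $2k$-dimensional $M$, a general position and duality argument should show that $j\:C\to M$ is $(k-1)$-connected. For $k\ge 4$ this gives that $C$ is connected and that $j$ induces $\pi_1(C)\cong\pi$. Consequently the universal cover $\tilde C$ is the pullback of $\tilde M$ along $j$.

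The Spivak fibration of the Poincar\'e pair $(C,\partial C)$ is $j^\ast\nu_M$, because $C$ is a codimension zero piece of $M$. The given homotopy $\psi\simeq j\circ\widehat\psi$ transports the equivalence $\nu_V\simeq\psi^\ast\nu_M$ to an equivalence $\nu_V\simeq\widehat\psi^\ast j^\ast\nu_M=\widehat\psi^\ast\nu_C$. Theorem \ref{thm:Smale-Hirsch}, applied with $\partial_0 V=\emptyset$, then shows that $\widehat\psi$ underlies an interior Poincar\'e immersion of $V$ in $C$ carrying this normal data. Theorem \ref{thm:Whitney} cannot be used to compress it, since $d=2k<2k+1$, so a Hopf invariant argument is needed.

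\emph{Step 2: the desuspension criterion in $C$.} I would invoke the version of \cite[thm.~E]{Klein_compression} for ambient Poincar\'e pairs, obtained by the same argument or by doubling. It says the immersion compresses if and only if its equivariant stable collapse
\[
\tilde\theta^!\in\{\tilde C/\partial\tilde C,\ \tilde V/\partial\tilde V\}_\pi
\]
desuspends. Here $\tilde V/\partial\tilde V$ is $(k-1)$-connected and $\tilde C/\partial\tilde C$ has free $\pi$-cells only in dimensions $\le 2k\le 3(k-1)+1$. So by Theorem \ref{thm:Hopf-properties}(5) it suffices to show $H(\tilde\theta^!)=0$.

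As in Lemma \ref{lem:Klein_sphere}, obstruction theory and Poincar\'e duality for $(C,\partial C)$ identify
\[
\{\tilde C/\partial\tilde C,D_2(\tilde V/\partial\tilde V)\}_\pi\cong H^{2k}(C,\partial C;\Bbb Z[\pi\times\pi]_{\Bbb Z_2})\cong Q_k .
\]

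\emph{Step 3: transport from $M$.} The embedding $W\subset M$ gives an unstable equivariant collapse $c\:\tilde M_+\to\tilde C/\partial\tilde C$. Because the normal data are compatible along the homotopy, the stable collapse of $\psi$ factors as $\tilde\psi^!=\tilde\theta^!\circ c$ in $\{\tilde M_+,\tilde V/\partial\tilde V\}_\pi$. The composition formula gives
\[
H(\tilde\psi^!)=H(\tilde\theta^!)\circ c+D_2(\tilde\theta^!)\circ H(c).
\]
Here $H(c)=0$ because $c$ is unstable, by Theorem \ref{thm:Hopf-properties}(4). Also $H(\tilde\psi^!)=0$, because $\psi$ is a genuine Poincar\'e embedding and so $\tilde\psi^!$ desuspends. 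Hence $H(\tilde\theta^!)\circ c=0$.

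Finally, $c$ has degree one: it sends $[M]$ to $[C,\partial C]$. So precomposition with $c$ is the map $H^{2k}(C,\partial C;-)\to H^{2k}(M;-)$, and under the two duality identifications it becomes the identity of $H_0(-)\cong Q_k$. It is therefore injective, which gives $H(\tilde\theta^!)=0$. Desuspension then yields a Poincar\'e embedding concordant to the immersion, with the transported normal data.

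\emph{Main obstacle.} The delicate step is the factorization $\tilde\psi^!=\tilde\theta^!\circ c$ together with the relative form of the compression criterion. Both require checking that the collapse constructions of \cite{Klein_compression} are natural with respect to the codimension zero inclusion $C\subset M$ and to the homotopy $\psi\simeq j\circ\widehat\psi$. I would first attempt this by working with the stabilized embedding $V\times D^m\to C\times D^m\subset M\times D^m$, where both collapses are defined by a single diagram.
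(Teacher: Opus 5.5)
Your proof is correct, but it obtains the desuspension by a different route from the paper.

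\textbf{The paper's route.} It never computes an obstruction group. It maps the equivariant cofiber sequence $\tilde W_+\to\tilde M_+\xrightarrow{c}\tilde C/\partial\tilde C$ into $T=\tilde V/\partial\tilde V$ and into $QT$, which gives a ladder of homotopy fiber sequences of equivariant mapping spaces. Since $T\to QT$ is $(2k-1)$-connected and $\tilde W_+$ has equivariant dimension at most $k$, stabilization on the $\tilde W_+$ term is $(k-1)$-connected. Hence the square involving $c^\ast$ is $(k-2)$-Cartesian. The stable collapse of $\widehat\psi$, the unstable collapse $b$ of $\psi$, and the homotopy $c^\ast a^{\mathrm{st}}\simeq\mathrm{st}(b)$ then form a point of the homotopy pullback, which lifts to an unstable $a$.

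\textbf{Your route.} You use $H$ as the complete desuspension obstruction in $C$ (Theorem \ref{thm:Hopf-properties}(5)). You kill $H(\tilde\theta^!)\circ c$ by the composition formula together with vanishing on the unstable classes $c$ and $\tilde\psi^!$. You then check that $c^\ast$ is injective on $\{-,D_2T\}_\pi$, via the duality computation of Lemma \ref{lem:Klein_sphere}; equivalently, this follows because $H^{2k-1}(W;-)=0$ by the spine bound.

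\textbf{Comparison.} Both arguments rest on the same black boxes: \cite[thm.~E]{Klein_compression} for the ambient pair $(C,\partial C)$, and naturality of collapses under $C\subset M$. Your version is conceptually natural: it says the $\mu$-type obstruction of $\widehat\psi$ in $C$ equals that of $\psi$ in $M$, namely zero. It also reuses machinery already set up for Theorem \ref{thm:Klein_sphere}. The paper's version is more elementary, needing only Freudenthal and a dimension count, with no identification of $\{\tilde C/\partial\tilde C,D_2T\}_\pi$ with $Q_k$. It also yields a desuspension compatible with the given unstable collapse of $\psi$, not merely some desuspension.

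A small point: the $(k-1)$-connectivity of $j$ needs no general position. By condition (ii) of the spine-dimension definition, $(W,\partial W)$ is $(k-1)$-connected, and $j$ is the homotopy cobase change of $\partial W\to W$.
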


\begin{proof}
The inclusion $j\:C\to M$ is the homotopy cobase change of
$\partial W\to W$, and is therefore $(k-1)$-connected. In
particular, it induces an isomorphism on fundamental groups.
For any space over $M$, a tilde will denote its pullback along
the universal cover $\tilde M\to M$. Thus $\tilde C$
is the universal cover of $C$.

The Spivak normal fibration of $C$ is the restriction of that
of $M$. Transport the stable normal data of $\psi$ along the
chosen homotopy. By \cite[thm.~A and its addendum]{Klein_immersion},
these data are realized by a Poincar\'e immersion of
$\widehat\psi$. The homotopy also identifies the two induced
covers of $V$. Set
\[
T=\tilde V/\partial\tilde V,
\qquad A_C=\tilde C/\partial\tilde C.
\]
Since $V$ is simply connected, there is an equivariant
identification
\[
T\cong(S^k_+\smsh S^k)\smsh\pi_+.
\]
In particular, the underlying space of $T$ is $(k-1)$-connected.

Excision supplies an equivariant homotopy cofiber sequence
\[
\tilde W_+\longrightarrow \tilde M_+
\xrightarrow{c}A_C.
\]
For a based $\pi$-space $A$, write
\[
\mathcal E(A)=\operatorname{map}_*^\pi(A,T),
\qquad
\mathcal E^{\mathrm{st}}(A)=\operatorname{map}_*^\pi(A,QT),
\]
where $QT=\Omega^\infty\Sigma^\infty T$. We then have
commutative diagram whose rows are homotopy fiber sequences:
\[
\xymatrix@C=18pt@R=18pt{
\mathcal E(A_C) \ar[r]^{c^*}\ar[d] &
\mathcal E(\tilde M_+) \ar[r]\ar[d] &
\mathcal E(\tilde W_+) \ar[d] \\
\mathcal E^{\mathrm{st}}(A_C) \ar[r]_{c^*} &
\mathcal E^{\mathrm{st}}(\tilde M_+) \ar[r] &
\mathcal E^{\mathrm{st}}(\tilde W_+).
}
\]
The vertical maps are stabilization. By the Freudenthal suspension theorem, the map
$T\to QT$ is $(2k-1)$-connected. Since $\tilde W_+$ has
equivariant homotopy dimension at most $k$, the right vertical
map is $(k-1)$-connected. Comparing homotopy fibers, the left
square is $(k-2)$-Cartesian, and in particular $0$-Cartesian.

Let $a^{\mathrm{st}}\in\mathcal E^{\mathrm{st}}(A_C)$ be the
stable collapse of the prescribed immersion of $\widehat\psi$,
and let $b\in\mathcal E(\tilde M_+)$ be the collapse of
$\psi$. Compatibility of collapse with composition provides a homotopy
\[
c^*a^{\mathrm{st}}\simeq\operatorname{st}(b)\, .
\]
These data define a point of the homotopy pullback of the other
three corners of the left square. Its $0$-Cartesianness supplies
an unstable map $a\:A_C\to T$ whose stabilization represents
$a^{\mathrm{st}}$.

Apply \cite[thm.~E]{Klein_compression} to the prescribed
immersion $V\to C$, with ambient Poincar\'e pair
$(C,\partial C)$. The unstable equivariant normal invariant
$a$ yields the desired embedding. We retain $k\ge4$, so that
the codimension is at least three and the underlying
metastable inequality $3k+4\le4k$ holds.
\end{proof}

\begin{proof}[Proof of Proposition \ref{prop:disjoint-embedding}]
By Theorem \ref{thm:Klein_sphere}, each $x_i$ is individually
represented by an interior Poincar\'e embedding of $V$, with
the stable normal data determined by its chosen representative
in $\pi_{k+1}(f)$. We argue by induction on $r$. The case
$r=1$ is already established.

Suppose that the first $r-1$ classes have been represented by
an interior Poincar\'e embedding
\[
W=\coprod_{i=1}^{r-1}V_i\longrightarrow M,
\]
and let $C$ be its complement. Choose an interior Poincar\'e
embedding $\psi\:V\to M$ representing $x_r$. As above,
$C\to M$ is $(k-1)$-connected and induces an isomorphism on
fundamental groups. Relative Hurewicz on universal covers
and excision give isomorphisms
\begin{align*}
\pi_k(M,C)
&\cong\pi_k(\tilde M,\tilde C)\\
&\cong H_k(\tilde M,\tilde C)\\
&\cong H_k(\tilde W,\partial\tilde W)
\cong\Lambda^{r-1}.
\end{align*}
Here $\tilde M$ and $\tilde C$ are simply connected;
all homology groups in this display have integral coefficients.

Choose the reference paths, lifts, and orientations used to
define the intersection pairing. The image of the core map
$S^k\to V\xrightarrow{\psi}M$ under
\[
\pi_k(M)\longrightarrow\pi_k(M,C)\cong\Lambda^{r-1}
\]
has coordinates $\lambda(x_i,x_r)$, $1\le i<r$. Indeed,
the map to relative homology is induced by collapse onto
$\tilde W/\partial\tilde W$, and its coordinates
are the equivariant intersections with the cores of the
$V_i$; compare \S\ref{subsec:another}.

All these coordinates vanish. The relative homotopy exact
sequence therefore supplies a homotopy factorization of the
core map through $C$. Since $V$ deformation retracts onto its
core, we obtain a homotopy factorization
\[
\psi\simeq j\widehat\psi\:V\longrightarrow C\longrightarrow M.
\]
Transport the chosen nullhomotopy in $X$ and its stable normal
data along this homotopy. Lemma \ref{lem:disjoint-embedding}
then represents $\widehat\psi$ by an interior Poincar\'e
embedding in $C$ with these data. Its composite in $M$ still
represents $x_r$. Gluing it to the existing embedding of $W$
produces an interior Poincar\'e embedding of all $r$ summands,
completing the induction.
\end{proof}

\section{Heegaard splittings, presentations, and the kernel formation} \label{sec:Heegard-normal}

\subsection{Heegaard splittings of normal maps}
Suppose that 
\[
f\: M^{2k+1} \to X^{2k+1}
\]
is a $k$-connected Poincar\'e normal map, $k \ge 2$. We may suppose that 
\[
(M;N,D^{2k+1})\quad \text{ and } \quad (X;X_0,D^{2k+1})
\] are codimension one splittings defining top cell decompositions.
Consider the kernel $K_k(M) = \pi_{k+1}(f)$. The latter is a finitely generated $\Lambda$-module.
Let  $x_1,\dots, x_r$ be a set of generators, and set
\begin{equation}\label{eqn:P}
(P;\partial_0P,\partial_1 P) =  \mathop{\mathsmaller{\coprod}}_{i=1}^r (D^k \times D^{k+1};S^{k-1} \times D^{k+1}, D^{k} \times S^{k}) \, .
\end{equation}
Then there is a standard codimension zero embedding $\partial_0 P\to D^{2k}$, i.e., one has
$r$ disjoint framed embedded $(k-1)$-spheres. In the following this Poincar\'e embedding is held fixed.

We identify $D^{2k}$ as part of the boundary of $N$ using
\[
D^{2k} \times \{0\} \subset \partial (D^{2k} \times I) \cong \partial(D^{2k+1}) = \partial N  \, .
\]

\begin{defn}[cf.~{\cite[defn.~12.6]{Ranicki_ags}}]
By a {\it Heegaard splitting of $f$}, we mean  a degree one map of codimension one splittings
\begin{equation} \label{eqn:Heegaard}
(f;f_0,f_1)\: (M;M_0,U)  \to (X;X_0,D^{2k+1})
\end{equation}
in which
\begin{enumerate}[(i)]
\item $U$ is the trace of the surgery on $\partial_0 P \subset D^{2k}$, i.e., 
\[
U =  (D^{2k} \times I) \cup_{\partial_0 P \times \{0\}} P \cong
\natural_{i=1}^r S^k \times D^{k+1} \, 
\]
(where for convenience, we have attached the handles to  $D^{2k} \times \{0\}$), and
\item the restriction of $f_1\: U \to D^{2k+1}$ to $i$-th summand $j_i\: S^k\times D^{k+1} \subset U$
represents $x_i$  with its specified null-homotopy.
\end{enumerate}
\end{defn}

More precisely, the second condition requires that $x_i$ be represented by a commutative diagram
\[
\xymatrix{
S^k \ar[r]^(.6){j_i}  \ar[d] & U \ar[r]^{\subset} \ar[d]_{f_1} & M \ar[d]^{f} \\
D^{k+1}  \ar[r] & D^{2k+1} \ar[r]_{\subset} &  X\, .
}
\]
\begin{prop} \label{prop:Heegaard} Let $g \: M^{2k+1} \to X^{2k+1}$ be a $k$-connected Poincar\'e normal map.
Then $g$ is homotopic to a normal map $f$ which admits 
a Heegaard splitting.
\end{prop}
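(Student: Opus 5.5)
The plan is to build $U$ inside $M$ directly, by Poincar\'e embedding the handlebody $\natural_{i=1}^r S^k\times D^{k+1}$ realizing the generators. After that, $f$ is adjusted by a homotopy so that $U$ maps into the top cell $D^{2k+1}$ and $M_0$ maps into $X_0$.

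\textbf{Step 1: represent the generators by maps into $M$.} Fix generators $x_1,\dots,x_r$ of $K_k(M)=\pi_{k+1}(g)$. Each $x_i$ is a square with top map $\gamma_i\colon S^k\to M$ and a null-homotopy of $g\circ\gamma_i$ in $X$. Since $\dim X_0$-skeleta can be taken of dimension $\le 2k$, I first homotope $g$ so that $g^{-1}$ of a small disc behaves well. More concretely, I use the top cell decomposition $X\simeq X_0\cup D^{2k+1}$ and deform the null-homotopies $D^{k+1}\to X$ off a point. This uses $k+1<2k+1$ and cellular approximation, so that they can be pushed into a collar of the top cell, and then by a further homotopy of $g$ into $D^{2k+1}$ itself.

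\textbf{Step 2: realize the handlebody by a Poincar\'e embedding.} The map $U\simeq\bigvee_r S^k\to M$ given by the $\gamma_i$, connected by reference paths to the basepoint disc, pulls $\xi_M\simeq g^*\xi$ back to a bundle that the null-homotopies trivialize. This gives the stable fiber homotopy equivalence $\nu_U\simeq \gamma^*\nu_M$ required by Theorem~\ref{thm:Smale-Hirsch}, so $\gamma$ underlies a Poincar\'e immersion of $U$ in $M$. I would make it relative to the disc $D^{2k}\times\{0\}$, i.e.\ relative to $\partial_0$ of the top cell of $M$.

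The homotopy spine dimension of $U$ is $p=k$, and $d=2k+1\ge 2p+1$. For $k\ge 3$, Theorem~\ref{thm:Whitney} therefore compresses the immersion to a Poincar\'e embedding $U\to M$. The small cases need a separate check against $\max(p+3,\frac32p+2)$, or else the restriction on $k$ that the paper ultimately needs ($d\ge 8$). The complement of this embedding is $M_0$, and it yields the codimension one splitting $(M;M_0,U)$.

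\textbf{Step 3: make $f$ a map of splittings.} It remains to homotope $g$ to a map $f$ carrying $U$ into $D^{2k+1}$ and $M_0$ into $X_0$, compatibly on $\partial U$, such that the restriction to $U$ recovers the given null-homotopies. Since $U$ is contractible onto $\bigvee S^k$ relative to the attaching disc, the null-homotopies in $D^{2k+1}$ from Step~1 extend $g|_U$ to a homotopy landing in $D^{2k+1}$. By the homotopy extension property along the cofibration $U\subset M$, this extends to a homotopy of $g$.

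Then $M_0\to X$ must be compressed into $X_0$ relative to $\partial M_0=\partial U$, which lands in $D^{2k+1}\cap X_0=S^{2k}$ after a radial push. The obstruction is the relative map $(M_0,\partial M_0)\to(X,X_0)$, and $(X,X_0)$ is $2k$-connected. Since $(M_0,\partial M_0)$ is homotopically of relative dimension $\le 2k$, being the complement of a $k$-spine in a $(2k+1)$-dimensional Poincar\'e space (by duality, using $(M_0,\partial_1 M)$-connectivity), the compression exists. The degree one condition on the triad then follows from naturality of fundamental classes under the splitting.

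I expect the main obstacle to be this last compression, in particular establishing the dimension bound for $(M_0,\partial M_0)$ via Poincar\'e duality.
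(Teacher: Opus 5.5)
Your argument breaks at the compression in Step 3, the point you flagged, and the justification you give for it is false. $(M_0,\partial M_0)$ is a connected $(2k+1)$-dimensional Poincar\'e pair, so $H^{2k+1}(M_0,\partial M_0;L)\cong H_0(M_0;\Bbb Z)\cong\Bbb Z\neq 0$. It is therefore never homotopically of relative dimension $\le 2k$.

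The obstruction to deforming $(M_0,\partial M_0)\to(X,X_0)$ rel $\partial M_0$ into $X_0$ lies in $H^{2k+1}(M_0,\partial M_0;\pi_{2k+1}(X,X_0))$. As in the proof of Proposition~\ref{prop:normal-map-top-cell}, using that $\pi_1(M_0)\to\pi$ is onto, this group is $\cong\Bbb Z$. The obstruction is detected by the image of $[M_0]$ in $H_{2k+1}(X,X_0;L)\cong\Bbb Z$. Since $f$ has degree one, the Mayer--Vietoris bookkeeping makes that image $(1-\deg f_1)$ times a generator, where $\deg f_1$ is the degree of $(U,\partial U)\to(D^{2k+1},S^{2k})$. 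So the compression exists if and only if $U$ already maps with degree one onto the top cell.

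Nothing in your Step 3 controls this degree. Contracting $g|_U$ along the null-homotopies gives a map of degree zero, so the obstruction is a generator and no compression exists. Indeed, a degree-one map cannot send $U$ to a point and $M_0$ into $X_0$, since $H_{2k+1}(X_0;L)=0$. Your closing remark that the degree condition ``follows from naturality'' is circular. Naturality only shows that any map of splittings must have $\deg f_1=1$, which is exactly why this must be built into the construction.

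The missing ingredient is the top-cell splitting of the normal map, Proposition~\ref{prop:normal-map-top-cell}, whose proof is precisely this degree-based obstruction computation. The paper proceeds as follows:
\begin{enumerate}
\item It applies that proposition first, so $g$ becomes a degree-one map of triads $(M;N,D^{2k+1})\to(X;X_0,D^{2k+1})$ identifying the top cells.
\item It Poincar\'e embeds only the $k$-handles $H=D^k\times D^{k+1}$ in $N$, relative to $\partial_0H\subset D^{2k}\subset\partial N$, via Theorems~\ref{thm:Smale-Hirsch} and~\ref{thm:Whitney} with $p=k$. This is the meaningful form of your ``relative to $D^{2k}\times\{0\}$''; an embedding into the closed space $M$ can only be interior.
\item It sets $U=D^{2k+1}\cup_{\partial_0H}H$ and takes the handle complement $W$ as the new $M_0$.
\item The null-homotopies give a homotopy of pairs into $(X_0,\partial X_0)$ pushing $H$ to a point of $\partial X_0$. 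The top cell keeps mapping with degree one, so no further compression is needed.
\end{enumerate}

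Alternatively, you could keep your interior embedding of $U$. After contracting along the null-homotopies, homotope $g|_U$ inside $D^{2k+1}$ to a degree-one map $(U,\partial U)\to(D^{2k+1},S^{2k})$; this does not change the transported classes $x_i$, since $D^{2k+1}$ is contractible. Then extend the homotopy to $M$ by homotopy extension and run the argument of Appendix~\ref{sec:top-cell} for $M_0$.

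Two minor points. Your Step 1 is confused: general position moves $(k+1)$-discs off an interior point of the top cell, i.e.\ into $X_0$, not into $D^{2k+1}$, and the step is unnecessary anyway. Also, the inequalities of Theorem~\ref{thm:Whitney} with $p=k$ and $d=2k+1$ hold for all $k\ge 2$, so there are no exceptional small cases.
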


The proof is deferred until Appendix \ref{sec:Heegaard-appendix}.

\subsection{Presentations} Let $f\: M^{2k+1} \to X^{2k+1}$ be a $k$-connected Poincar\'e normal map.

\begin{defn}[{\cite[defn.~12.11]{Ranicki_ags}}]\label{defn:presentation} A {\it presentation} 
of $f$ is a normal cobordism
\[
(e;f,f')\: (W^{2k+2};M,M') \to (X\times I;X\times\{0\}, X\times \{1\}) 
\]
such that $e$ is $(k+1)$-connected and $f'$ is $k$-connected.
\end{defn}

A presentation obtained as the trace of Poincar\'e surgeries
on finitely many framed $k$-spheres, with chosen
nullhomotopies in $X$ and compatible normal data, will be
called a {\it handle presentation}. We stabilize handle presentations when necessary so that their distinguished kernel modules are free.

For such a presentation $(e;f,f')$, set
\[
F=K_{k+1}(W,M'),\qquad G=K_{k+1}(W).
\]
The associated formation is
\begin{equation}\label{eqn:presentation-formation}
(H_{(-1)^k}(F);F,G)\, ,
\end{equation}
where $G$ is identified with the image of its Lagrangian
inclusion
\[
(\gamma,\delta)\:
G\longrightarrow F\oplus F^\ast.
\]
The first component is the inclusion-induced homomorphism
\[
\gamma\:K_{k+1}(W)\longrightarrow K_{k+1}(W,M'), 
\]
and the second is the composition 
\[
K_{k+1}(W) \longrightarrow K_{k+1}(W,M) \cong K^{k+1}(W,M') \cong K_{k+1}(W,M')^\ast
\]
where the first isomorphism follows from Poincaré-Lefschetz duality, and the second one follows from the 
universal coefficient isomorphism for the kernel (compare~\cite[cor.~10.29(ii)]{Ranicki_ags}).

\subsection{The kernel formation}  
The convention will be that for all restrictions to pieces of a splitting, coverings
and kernel modules are obtained by pullback from the
universal cover of the ambient target $X$. 

For the chosen Heegaard splitting, $f_0$ is
$k$-connected. The inclusion $U \to M$ induces a surjective homomorphism
\[
K_k(U)\cong\Lambda^r\longrightarrow K_k(M)\, ,
\]
since it sends the standard basis to the chosen generators
$x_1,\dots,x_r$.
Excision and the kernel exact sequence yields
$K_i(M_0,\partial U)=0$ for $i\le k$.
By duality,  $K^q(M_0)=0$ for $q>k$.
Then the argument of
\cite[lem.~10.26(ii)]{Ranicki_ags} shows that
$K_k(M_0)$ is stably free. Consequently, so is
\[
K_{k+1}(M_0,\partial U)
\cong K^k(M_0)\cong K_k(M_0)^\ast.
\]
The other distinguished module
$K_{k+1}(U,\partial U)\cong\Lambda^r$ is free.

When necessary, we stabilize the chosen splitting
by adjoining zero generators, so that both
modules are free.

\begin{defn}[cf.~{\cite[defn.~12.7]{Ranicki_ags}}] Assume $ k \ge 2$.
Let $f\: M^{2k+1} \to X$ be a Poincar\'e normal map equipped with a Heegaard splitting $(f;f_0,f_1)$
as in \eqref{eqn:Heegaard}. 

 The {\it kernel formation} of $f$ 
is the $(-1)^k$-quadratic formation
\[
(K,\lambda,\mu;F,G) := (K_k(\partial U),\lambda,\mu; K_{k+1}(U,\partial U), K_{k+1}(M_0,\partial U))\, ,
\]
with $(K_k(\partial U),\lambda,\mu) = H_{(-1)^k}(K_{k+1}(U,\partial U))$ the hyperbolic $(-1)^k$-quadratic form over $\Lambda$ of
the normal map $f_{|\partial U}\: \partial U \to S^{2k}$ over $X$.
\end{defn}

\begin{lem}\label{lem:presentation-kernel}
Let $f\:M^{2k+1}\to X$ be a $k$-connected Poincar\'e normal
map equipped with a Heegaard splitting, and assume $k \ge 4$.
Write
\[
\cal K_f=(A,\lambda,\mu;F,G)
\]
for its kernel formation. Given a quadratic Lagrangian
$L\subset A$ complementary to $F$, there is a handle
presentation
\[
(e;f,f')\:(V;M,M')\rightarrow
(X\times I;X\times\{0\},X\times\{1\})
\]
whose associated formation is $(H_{(-1)^k}(F_V);F_V,G_V)$ as in \eqref{eqn:presentation-formation}
with 
\[
F_V = K_{k+1}(V,M')\quad \text{ and } \quad G_V = K_{k+1}(V)\, .
\] This formation is identified with $\cal K_f$
by an isometry carrying $F_V^\ast$ to $L$.

In particular, under these identifications, $\gamma_V$
coincides with
\[
p_F^L|_G\:G\longrightarrow F,
\]
where $p_F^L\:A=F\oplus L\to F$ is projection along $L$.
\end{lem}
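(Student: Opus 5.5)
The plan is to follow Ranicki's proof of \cite[prop.~12.13]{Ranicki_ags}, with the manifold steps replaced by the Poincar\'e embedding machinery of \S\ref{sec:kernel-form}. Write $\epsilon=(-1)^k$ and $A=K_k(\partial U)=F\oplus F^\ast$ with $F=K_{k+1}(U,\partial U)\cong\Lambda^r$.

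\textbf{Step 1: realize $L$ by embedded spheres in $\partial U$.} Choose a basis $y_1,\dots,y_r$ of $L$; it is free, being complementary to the free module $F$. Since $\partial U\to S^{2k}$ is a $k$-connected normal map of $2k$-dimensional Poincar\'e spaces over $X$, and $L$ is a quadratic Lagrangian, we have $\mu(y_i)=0$ and $\lambda(y_i,y_j)=0$. Because $k\ge 4$, Proposition~\ref{prop:disjoint-embedding} applies and gives an interior Poincar\'e embedding
\[
\textstyle\coprod_{i=1}^r S^k\times D^k\to \partial U
\]
whose $i$-th summand represents $y_i$, with compatible null-homotopies in $X$ and normal data. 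Thickening in the collar $\partial U\times I\subset M$ and using the inclusion $\partial U\subset M$, we get framed $k$-spheres in $M$. Their classes are the images of the $y_i$ in $K_k(M)$, and these come with null-homotopies in $X$.

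\textbf{Step 2: do the surgery and identify the trace.} Let $V$ be the trace of Poincar\'e surgery on these spheres (Definition~\ref{defn:surgery}), with the handles $D^{k+1}\times D^k$ attached. The chosen null-homotopies and normal data extend $f$ to a normal map $e\colon V\to X\times I$. Then:
\begin{itemize}
\item[(a)] $e$ is $(k+1)$-connected, since we only attach $(k+1)$-cells along classes of $K_k(M)$, and $L\to A\to K_k(M)$ is onto because $A=F\oplus L$ and $F\to K_k(U)\to K_k(M)$ vanishes in the relevant sense. Precisely, the composite $L\cong A/F\cong K_k(U)$ followed by $K_k(U)\to K_k(M)$ is onto.
\item[(b)] $F_V=K_{k+1}(V,M')\cong\Lambda^r$ via the dual handles.
\item[(c)] $M'$ has its own Heegaard splitting, obtained from $(M_0,U)$ by replacing $U$ with the handlebody $U'$ whose boundary Lagrangian is $L$ in place of $F$. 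Hence $f'$ is $k$-connected, because $G\cap L$ and the surgery kill nothing below degree $k$.
\end{itemize}
This identification of $M'$ is the codimension-one gluing $M'\simeq M_0\cup_{\partial U}U'$. It comes from a handle-cancellation, that is, a Poincar\'e embedding rearrangement inside $U\cup$ handles. I would prove it via excision on universal covers together with the fact that $U\cup_{\partial U\times 1}(\text{handles})$ is the trace of surgery of $\partial U$ along $L$.

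\textbf{Step 3: compare formations.} Excision gives
\[
K_{k+1}(V,M)\cong\textstyle\bigoplus\pi_{k+1}(D^{k+1},S^k)\otimes\Lambda\cong L
\]
and dually $F_V\cong L^\ast$. The form $\lambda$ identifies $L^\ast$ with $F$ via the nonsingular pairing of complementary Lagrangians. This gives the isometry $H_\epsilon(F_V)\cong A$ carrying $F_V\mapsto F$ and $F_V^\ast\mapsto L$. For $G_V=K_{k+1}(V)$, the exact sequence \eqref{eqn:exact-sequences} for $(V,M)$ and the Mayer--Vietoris decomposition $M=M_0\cup U$ identify $K_{k+1}(V)$ with $K_{k+1}(M_0,\partial U)=G$, sitting inside $A$ through the boundary map. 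Chasing the inclusion $K_{k+1}(V)\to K_{k+1}(V,M')$ through these isomorphisms gives the projection $p^L_F|_G$.

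I expect the main obstacle to be this last naturality check. One must verify that the homotopy-theoretic intersection pairing of \S\ref{subsec:another} agrees with the duality pairing used to define $\delta$ in \eqref{eqn:presentation-formation}, and that the signs $\epsilon$ are consistent. To handle it, I would reduce to the universal-cover chain level, where it becomes Ranicki's algebraic computation in \cite[ch.~12]{Ranicki_ags}.
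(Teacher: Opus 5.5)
\textbf{The gap is in Step 1, and Step 1 is the essential difficulty of the lemma.}

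Applied to $\partial U\to S^{2k}$, Proposition~\ref{prop:disjoint-embedding} is a statement about the trivial group. It requires $\pi=\pi_1(M)\cong\pi_1(X)$. Here $\partial U\cong\sharp_r S^k\times S^k$ is simply connected, and its kernel modules are computed from the cover pulled back from $\tilde X$. That cover is trivial, because $\partial U$ maps into the disk $D^{2k+1}$. Hence $K_k(\partial U)=\Lambda\otimes_{\Bbb Z}H_k(\partial U;\Bbb Z)$. Every map $S^k\to\partial U$, embedded or not, lifts to a single sheet of $\partial U\times\pi$. It therefore represents an element $g\otimes z$ with $g\in\pi$ and $z\in H_k(\partial U;\Bbb Z)$.

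A quadratic Lagrangian complementary to $F$ is the graph of $s=\psi-\epsilon\psi^\ast$ for an arbitrary $\Lambda$-matrix $\psi$. It typically contains no nonzero element of this form. For example, take $r=1$, $\pi$ infinite cyclic on $t$, $w$ trivial, $k$ even, $F=\Lambda e$, and $L=\Lambda\cdot(e^\ast+(t-t^{-1})e)$. This is a quadratic Lagrangian complementary to $F$, since $[t-t^{-1}]=[t-\bar t]=0$ in $Q_k$. But $a(e^\ast+(t-t^{-1})e)=g(me+ne^\ast)$ forces $n(t-t^{-1})=m$ in $\Bbb Z[t,t^{-1}]$, so $m=n=0$ and $a=0$.

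This is not a corner case. In Proposition~\ref{prop:operations}, $L$ must also be complementary to $G=K_{k+1}(M_0,\partial U)$, which carries genuine group-ring information. The same objection applies to any construction supported in $U$, whose induced cover is also trivial. Your Step 3 bookkeeping ($F_V\leftrightarrow F$, $F_V^\ast\leftrightarrow L$) is the right target, but the spheres cannot be produced where you put them.

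\textbf{What is missing is a way to route the attaching spheres through $M_0$} along paths realizing the group elements that occur in $\psi$. The paper does this in four steps.
\begin{enumerate}
\item It starts from the handle presentation given by surgery on the cores of $U$, with Lagrangian inclusion $(\gamma,\delta)$.
\item It uses Theorems~\ref{thm:Smale-Hirsch} and~\ref{thm:Whitney} (relative spine dimension $1$) to Poincar\'e embed $1$-handles in $C=M_0$. These run along paths with endpoints on $\partial U$ representing those group elements. The result is that $N=U\cup(1\text{-handles})$ is a smooth handlebody Poincar\'e embedded in $M$.
\item Inside $N$, leaving the complement of $N$ unchanged, it performs Wall's framed linking and self-linking changes, with group-ring coefficients chosen to realize $-s$.
\item The new Lagrangian inclusion is $(\gamma-s\delta,\delta)$. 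So the formation becomes $T_{-s}\mathcal K_f$, and $\gamma-s\delta=p_F^L|_G$ is read off directly.
\end{enumerate}
The naturality question you flagged is then settled by Wall's local calculation, since all the intersections involved occur in $N\times I$.
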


\begin{proof}
The surgery trace of the given Heegaard splitting supplies
an initial handle presentation. Put $\epsilon=(-1)^k$ and
identify its hyperbolic form with $H_\epsilon(F)$. Write
\[
(\gamma,\delta)\:G\longrightarrow F\oplus F^\ast
\]
for its Lagrangian inclusion.

Since $L$ is complementary to $F$, it has the form
\[
L=\{(sy,y)\mid y\in F^\ast\}
\]
for a homomorphism $s\:F^\ast\to F$. Since $L$ is a quadratic Lagrangian, the restrictions of
$\lambda$ and $\mu$ to $L$ vanish.  In terms of the graph
homomorphism $s$, these conditions say that $s$ lies in the
image of $1-\epsilon T$; equivalently,
\[
s=\psi-\epsilon\psi^\ast
\]
for some homomorphism $\psi\:F^\ast\to F$. (cf.~\cite[pp.~60--61]{Wall} and
\cite[proof of prop.~12.10]{Ranicki_ags}).
Consequently,
\[
T_{-s}=
\begin{pmatrix}1&-s\\0&1\end{pmatrix}
\]
is a quadratic isometry carrying $L$ to $F^\ast$.

We realize this isometry by changing the attaching
embeddings. Let $U\to M$ be the standard handlebody in the given
Heegaard splitting, and let $C=M_0$ be its Poincar\'e
complement in $M$. Hence,
\[
M\simeq U\cup_{\partial U}C\, ,
\qquad \partial C=\partial U\, .
\]
Then $\pi_1(C) \to \pi_1(M)$ is an isomorphism. Choose finitely many
paths in $C$, with endpoints on $\partial U$, representing
the group elements occurring in a matrix for $\psi$.
Theorems \ref{thm:Smale-Hirsch} and \ref{thm:Whitney}
thicken these paths, relative to their endpoints, to
disjoint Poincar\'e embedded $1$-handles. Their union
with $U$ is a smooth handlebody $N$ Poincar\'e embedded
in $M$. Here the relative spine dimension is $1$, so
the compression inequality is satisfied.

Inside $N$, perform the framed linking and self-linking
changes of \cite[proof of thm.~5.8]{Wall}, choosing
their signed group-ring coefficients to realize $-s$.\footnote{For a related homotopy theoretic construction, compare
 with the proof of \cite[thm.~11.1]{klein_surgery1}.}
These changes are fixed near $\partial N$; the Poincar\'e
complement of $N$ is left unchanged. Transport the
nullhomotopies in $X$ and the stable normal data along
the homotopies.

The homotopies and excision identify the old and new
kernel modules. The meridians are unchanged, while the
longitudes acquire the meridian terms prescribed by
$-s$. Consequently, the new Lagrangian inclusion is
\[
(\gamma-s\delta,\delta);
\]
this is the local calculation of
\cite[pp.~60--61]{Wall}. Compare also with 
\cite[proof of prop.~12.10, p.~309]{Ranicki_ags}.
All intersections used in that calculation occur in
$N\times I$.

The new formation is therefore $T_{-s}\cal K_f$.
The inverse isometry identifies it with $\cal K_f$
and carries its standard complementary Lagrangian
$F^\ast$ to $L$. Its first component is
$\gamma-s\delta=p_F^L|_G$, as required.
\end{proof}

\begin{rem}
The above argument requires no manifold structure on the
ambient Poincar\'e spaces. Smooth handle models are used
only for local changes of presentations, supported in
Poincar\'e embedded smooth handlebodies whose Poincar\'e
complements are left unchanged. \end{rem}

\section{The surgery obstruction of a Poincar\'e normal map} \label{sec:surgery-obstruction}

Let $(f,b)\: M^d \to X^d$ be a normal map of Poincar\'e duality spaces of dimension $d\ge 8$. 
Assume that $f$ is $\lfloor \frac{d}{2}\rfloor$-connected. Let $\pi = \pi_1(X)$ and let $w\: \pi \to \{\pm 1\}$ denote the orientation character.
If $d= 2k$, we have the  kernel form 
\[
(K_k(M),\lambda,\mu) 
\]
of \S\ref{sec:kernel-form}  and if $d= 2k+1$ we have the kernel formation
\[
(K_k(\partial U),\lambda,\mu; K_{k+1}(U,\partial U), K_{k+1}(M_0,\partial U))
\] of \S\ref{sec:Heegard-normal} associated with a  Heegaard splitting 
\[
(f;f_0,f_1) \: (M;M_0,U) \to (X;X_0,D^{2k+1})\, .
\]

In the even-dimensional case, $K_k(M)$ is a finitely
generated stably free $\Lambda$-module
\cite[cor.~10.29(iii)]{Ranicki_ags}, rather than necessarily
a free module. The definitions of symmetric and quadratic
forms extend to such modules. To obtain a representative
of the surgery obstruction in the free-module formulation
of $L_{2k}(\pi,w)$, we first replace the kernel form by
\[
(K_k(M),\lambda,\mu)\oplus H_{(-1)^k}(\Lambda^r),
\]
where $r$ is sufficiently large that the underlying module
is free. The resulting class is independent of this choice,
since any two such stabilizations become isomorphic after
adding further hyperbolic summands.

\begin{defn}  The {\it surgery obstruction}
\[
\sigma(f) \in L_d(\pi,w)
\]
is the cobordism class of
\[
\begin{cases}
(K_k(M),\lambda,\mu)\, , \quad & d= 2k\, ,\\
(K_k(\partial U),\lambda,\mu; K_{k+1}(U,\partial U), K_{k+1}(M_0,\partial U))\, , & d= 2k+1\, .
\end{cases}
\]
\end{defn}

Assume next that $f\: M^d \to X^d$ is not necessarily $\lfloor \frac{d}{2}\rfloor$-connected. 
By \cite[thm.~8.9]{klein_surgery1} we may find a normal cobordism from $f$  to a $\lfloor \frac{d}{2}\rfloor$-connected normal map $f'\: M' \to X$.
In this case we define $\sigma(f) := \sigma(f')$.

\begin{prop} The surgery obstruction of $f$ is well-defined. 
\end{prop}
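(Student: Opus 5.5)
Well-definedness splits into two parts. First, for a fixed highly connected normal map, the class must not depend on the auxiliary choices made in building the form or formation. Second, it must not depend on the choice of highly connected representative in the normal cobordism class. The plan is to reduce the second part to the first by showing that the obstruction is a cobordism invariant of highly connected normal maps.

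\emph{Independence of choices.} For $d=2k$ the form $(K_k(M),\lambda,\mu)$ is intrinsic. By Lemma~\ref{lem:ernel-props}(iii), $\lambda$ is defined from cup products and duality. By Theorem~\ref{thm:Klein_sphere} and Lemma~\ref{lem:Klein_sphere}, $\mu$ is defined from the equivariant Hopf invariant of the collapse of a representing immersion. The immersion is determined up to concordance by $x$ and its null-homotopy, so $\mu$ depends only on $x$. The free stabilization is already dealt with in the definition.

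For $d=2k+1$ the choices are the Heegaard splitting: the generators $x_1,\dots,x_r$, the top cell decomposition, and the handlebody $U$. I would follow Ranicki \cite[prop.~12.10, 12.21, 12.29]{Ranicki_ags}, using the presentations and Lemma~\ref{lem:presentation-kernel}.
\begin{itemize}
\item Adding a generator changes the formation by a direct sum with a trivial or boundary formation.
\item Changing the generating set by a basis change or adding relations acts on $F$ by automorphisms, giving a stable isomorphism.
\item Two Heegaard splittings with the same generators differ, up to handle slides realized inside Poincar\'e embedded smooth handlebodies as in the proof of Lemma~\ref{lem:presentation-kernel}, by an isometry of the form $T_{-s}$. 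This isometry preserves $F$ and so gives an isomorphism of formations.
\end{itemize}
The top cell decomposition is unique up to equivalence of triads by \cite[thm.~2.4]{Wall_PD}.

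\emph{Cobordism invariance.} Suppose $(e;f,f')\colon(W;M,M')\to X\times I$ is a normal cobordism between $\lfloor d/2\rfloor$-connected normal maps.
\begin{enumerate}
\item By relative surgery below the middle dimension, \cite[thm.~8.9]{klein_surgery1} applied rel boundary, make $e$ highly connected, roughly $\lfloor (d+1)/2\rfloor$-connected.
\item Decompose $W$ as a composite of elementary traces of surgeries on framed middle-dimensional spheres. In the Poincar\'e setting, these are surgeries on Poincar\'e embeddings (Definition~\ref{defn:surgery}). Such a decomposition is obtained from a handle decomposition of $W$ rel $M$ constructed with Theorems~\ref{thm:Smale-Hirsch} and \ref{thm:Whitney}, in the same way as the Heegaard splittings of Appendix~\ref{sec:Heegaard-appendix}.
\item Compute the effect of each elementary trace directly. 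For $d=2k$, surgery on a framed $k$-sphere $x$ with $\mu(x)=0$ and $\lambda(x,x)=0$ cannot be done while preserving $k$-connectivity unless the resulting map has kernel $x^\perp/\langle x\rangle$. In that case the form changes by removing or adding a hyperbolic summand.
\item Alternatively, and more cleanly, show that the kernel form (or formation) of the boundary of a highly connected $(d+1)$-dimensional normal map is algebraically null-cobordant. In even dimensions the form of $M\sqcup -M'$ admits a Lagrangian, namely the image of $K_{k+1}(W,\partial W)\to K_k(\partial W)$. Its $\mu$ vanishes because its elements are represented by spheres bounding Poincar\'e embedded discs in $W$, so their collapse maps desuspend. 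By \cite[thm.~11.51]{Ranicki_ags} this form is hyperbolic. In odd dimensions, use a presentation and Lemma~\ref{lem:presentation-kernel} to show that the kernel formation of $\partial W$ is a boundary formation via Ranicki's Lemma (prop.~12.19).
\end{enumerate}

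\emph{Main obstacle.} I expect the hardest step to be the identification in the last item: showing that $\mu$ vanishes on the image Lagrangian, respectively that the formation is a boundary. This requires a relative version of Theorem~\ref{thm:Klein_sphere} for immersed discs in $(W,\partial W)$. I would first try to use the relative compression theorem \cite[thm.~E]{Klein_compression} together with naturality of the Hopf invariant under restriction, via the composition formula of Theorem~\ref{thm:Hopf-properties}. The aim is to show that the boundary collapse map is the restriction of an unstable map.
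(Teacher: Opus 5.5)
There is a genuine gap. The steps that carry the weight are either asserted without proof or rest on Poincar\'e embedding statements outside the available range.

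In item (4) for $d=2k$, classes in the image of $K_{k+1}(W,\partial W)\to K_k(\partial W)$ are represented by relative $(k+1)$-discs in the $(2k+1)$-dimensional $W$. These have relative homotopy spine dimension $p=k+1$, and Theorem~\ref{thm:Whitney} needs $2k+1\ge 2p+1=2k+3$. So there are no Poincar\'e embedded discs whose collapses desuspend, and the vanishing of $\mu$ on this submodule is exactly the open step you flag, not a consequence of known results. Moreover, in general $e\colon W\to X\times I$ can only be made $k$-connected. Then $K_k(W)$ is merely finitely presented, and $\ker(K_k(\partial W)\to K_k(W))$ need not be a direct summand, hence need not be a Lagrangian in the sense of \S\ref{sec:forms}. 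Item (3) is no substitute: surgery on a single sphere $x$ with $\mu(x)=0=\lambda(x,x)$ splits off a hyperbolic summand only when some $y$ has $\lambda(x,y)=1$; otherwise connectivity is lost.

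For $d=2k+1$, writing $W^{2k+2}$ as a composite of elementary traces needs $(k+1)$-handles, and these violate the compression inequality from either end. Appendix~\ref{sec:Heegaard-appendix} works only because $D^k\times D^{k+1}$ in dimension $2k+1$ lies exactly on the edge $d=2p+1$. Even for closed spaces the paper has to switch to Theorem~\ref{thm:extension} in the analogous situation, and that result changes the attaching data rather than compressing a prescribed immersion. Lemma~\ref{lem:presentation-kernel} only builds a presentation from a prescribed complementary Lagrangian. It says nothing about an arbitrary normal bordism. Nor does it say how two arbitrary Heegaard splittings are related, which is what your third bullet needs; that is an unproved Reidemeister--Singer type statement.

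The missing idea is to compare with an invariant that is intrinsic from the outset, rather than proving invariance by handle manipulations. The paper (Proposition~\ref{prop:Heegaard-unique}, via \cite[prop.~5.4]{Ranicki}) identifies the cobordism class of the kernel formation of any Heegaard splitting with the class of a quadratic Poincar\'e complex. That complex is the algebraic mapping cone of the chain-level Umkehr map $C(\tilde f^!)\colon C_*(\tilde X_+)\to C_*(\tilde M_+)$. Its class is built from chain-level duality data alone, so it makes sense for Poincar\'e normal maps. It needs neither high connectivity nor a Heegaard splitting, and it is a normal bordism invariant. Combined with \cite[prop.~6.6(i)]{Ranicki} for the choice of highly connected representative, both halves of well-definedness follow at once, with no middle-dimensional embedding theory.

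If completed, your route would have one merit: it works directly with the Hopf-invariant $\mu$ instead of relying on its identification with Ranicki's chain-level quadratic structure. But it would require a relative Hopf invariant theory for immersed discs and a middle-dimensional handle theory for Poincar\'e normal bordisms, and the paper provides neither.
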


\begin{proof} The different ways of making $f$ a $k$-connected map by a normal cobordism do not change its surgery obstruction
 \cite[prop.~6.6(i)]{Ranicki}. If $d=2k+1$, then the cobordism class of the kernel formation
 is independent of the choice of Heegaard splitting by Proposition 
 \ref{prop:Heegaard-unique} below.
 \end{proof}

\section{Realization and the Proof of Theorem \ref{bigthm:sigma-onto}} \label{sec:realization}

In this section we prove a Poincar\'e space analog of the 
Wall realization theorem in odd dimensions \cite[thm.~6.5]{Wall}. 
We then apply this result to prove Theorem  \ref{bigthm:sigma-onto}.

Let $X$ be a connected Poincar\'e space of dimension $2k$ with $k  \ge 4$. Set $\pi := \pi_1(X)$
and let $w\: \pi \to \{\pm 1\}$ be the orientation character.
Let 
\[
A = (K,\lambda,\mu;F,G)
\]
be a $(-1)^k$-quadratic formation.

We use the following Poincar\'e space analogue of
\cite[defn.~12.16]{Ranicki_ags}:
Let
\[
(e;\id,e_2)\:(W^{2k+1};X,N_2)\longrightarrow
(X\times[0,2];X\times\{0\},X\times\{2\})
\]
be a $k$-connected Poincar\'e normal cobordism, where
$e_2$ is a homotopy equivalence. 

For the construction below, we consider a decomposition
\[
W=W_-\cup_Z W_+
\]
into two successive surgery traces. Here $W_-$ is the
trace of trivial surgeries on framed $(k-1)$-spheres
in $X$, with outgoing boundary $Z$, and $W_+$ is the
trace of surgeries on framed $k$-spheres in $Z$,
with outgoing boundary $N_2$. All kernel modules are
computed for the corresponding restrictions of $e$,
using the covering spaces pulled back from the
universal cover of $X$.

Set
\[
F_-=K_{k+1}(W_-,Z),
\qquad
F_+=K_{k+1}(W_+,Z).
\]
The boundary homomorphisms identify these modules
with submodules of $K_k(Z)$: they are injective
because $K_{k+1}(W_-)=K_{k+1}(W_+)=0$, as follows
from the handle descriptions viewed from the
respective outer boundaries. In the construction
below their images are free quadratic Lagrangians.
The relative kernel formation is
\[
(K_k(Z),\lambda_Z,\mu_Z;F_-,F_+)\, ,
\]
where we equip  $Z$ with the orientation sheaf and fundamental class as the outgoing boundary of
$W_-$. Its cobordism class is the surgery obstruction
relative to the specified outer-boundary homotopy
equivalences.

In the proof we use the notation 
\[
W_- = M\, , \quad Z = M_1\, , \quad W_+ = M'\, .
\]

\begin{thm}[Realization]\label{thm:Wall_realization} 
With respect to these assumptions, there is a Poincar\'e cobordism
$(W,\partial W)$  of dimension $2k+1$
with $\partial W = \partial_0 W\amalg \partial_1 W$,
$\partial_0 W = X$, and a normal cobordism
\[
(f;f_0,f_1)\: (W,\partial_0 W,\partial_1 W)  \to (X \times I,X\times \{0\},X\times \{1\})
\]
such that $f$ is $k$-connected, $\partial_0 W \to X\times \{0\}$ is the identity and $\partial_1 W \to X \times \{1\}$ is a homotopy equivalence. Moreover, with respect to a suitable choice of Heegaard splitting,
the kernel formation of $(f;f_0,f_1)$ is isomorphic to $A$.
\end{thm}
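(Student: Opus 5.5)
We follow Wall's proof of \cite[thm.~6.5]{Wall} in Ranicki's formation language \cite[prop.~12.3, 12.10]{Ranicki_ags}, replacing smooth handle attachments by Poincar\'e embeddings. By \cite[prop.~12.3(ii)]{Ranicki_ags} we first replace $A$ by an isomorphic formation $(H_{\epsilon}(F);F,\alpha(F))$ with $F=\Lambda^r$, $\epsilon=(-1)^k$ and $\alpha$ an automorphism of $H_\epsilon(F)$. We build $W=W_-\cup_Z W_+$ in two stages.

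\emph{Stage one (the trivial $k$-handles).} Take $W_-=M$ to be the trace of $r$ trivial surgeries on framed $(k-1)$-spheres in a disc $D^{2k}\subset X$, i.e. $M=(X\times I)\natural(\natural_{i=1}^r S^{k-1}\times D^{k+2})$ capped suitably, with outgoing boundary $M_1=Z\simeq X\sharp \sharp_{i=1}^r(S^k\times S^k)$. The normal map $Z\to X$ collapses the connected summands, so it is $k$-connected. Its kernel form is $K_k(Z)\cong H_\epsilon(\Lambda^r)=H_\epsilon(F)$, with $F_-=K_{k+1}(W_-,Z)$ corresponding to $F$ (spanned by the belt spheres). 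This part is purely local and requires only the standard codimension zero embedding of $\partial_0P$ in a disc, as in the definition of a Heegaard splitting.

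\emph{Stage two (realizing $\alpha(F)$).} Let $y_1,\dots,y_r$ be the images under $\alpha$ of a basis of $F$. They span a free Lagrangian $\alpha(F)$, so $\mu(y_i)=0$ and $\lambda(y_i,y_j)=0$. Since $k\ge4$, Proposition~\ref{prop:disjoint-embedding} gives disjoint interior Poincar\'e embeddings $\coprod_{i=1}^r S^k\times D^k\to Z$ representing the $y_i$, together with the null-homotopies in $X$ and the stable normal data coming from $\pi_{k+1}$ of the normal map. Surgery on this embedding (Definition~\ref{defn:surgery}, with $P=\coprod D^{k+1}\times D^k$) yields the trace $W_+=M'$ and the outgoing boundary $N_2=\partial_1W$. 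Transporting the normal data across the trace makes the normal map extend over $W_+$.

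It remains to check three things. The first is that $e_2\colon N_2\to X$ is a homotopy equivalence. The kernel exact sequences \eqref{eqn:exact-sequences} for $(W_+,Z)$ and $(W_+,N_2)$ identify $K_k(N_2)$ with $K_k(Z)/(\alpha(F)+\alpha(F)^\perp)$-type quotients; since $\alpha(F)=\alpha(F)^\perp$ is a direct summand complementary in a hyperbolic sense to $\alpha(F^*)$, these kernels vanish. Combined with Remarks~\ref{rems:kernels}(2) and the duality isomorphisms of Lemma~\ref{lem:ernel-props}, this gives $K_\ast(N_2)=0=K^\ast(N_2)$. The second is that $f$ is $k$-connected, which follows from the handle dimensions. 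The third is that the resulting Heegaard splitting of $W$ (with $U$ the $k$-handles of $W_-$, as in Section~\ref{sec:Heegard-normal}) has kernel formation $(K_k(Z);F_-,F_+)=(H_\epsilon(F);F,\alpha(F))\cong A$. Here one identifies $F_+=K_{k+1}(W_+,Z)$ with the span of the cores $y_i$ via the boundary map, using the intersection description of \S\ref{subsec:another} as in the proof of Proposition~\ref{prop:disjoint-embedding}.

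The main obstacle is this last identification: matching the relative kernel formation built from $W_-\cup_ZW_+$ with the Heegaard kernel formation of Section~\ref{sec:Heegard-normal}. Doing so requires exhibiting an honest Heegaard splitting $(W;W_0,U)$ of the cobordism whose pieces $U$ and $W_0$ recover $F_-$ and $F_+$ inside $K_k(\partial U)\cong K_k(Z)$. To handle it, I would take $U$ to be the union of the trivial handles and $W_0$ its Poincar\'e complement, which contains $W_+$. I would then verify, by excision on universal covers, that the inclusion $Z\subset W$ induces the required isometry of hyperbolic forms. The bookkeeping of orientations and reference paths is what fixes the isomorphism with $A$ rather than $-A$.
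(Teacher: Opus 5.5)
Your proposal is correct and follows essentially the paper's proof: trivial surgeries on $r$ framed $(k-1)$-spheres in a disc give $W_-$ with $Z\simeq X\sharp(\sharp_r S^k\times S^k)$ and $F_-=F$, Proposition~\ref{prop:disjoint-embedding} applied to a basis of the free Lagrangian $G$ gives $W_+$, and the relative kernel formation $(K_k(Z);F_-,F_+)$ is read off as $(H_{(-1)^k}(F);F,G)\cong A$; the paper simply takes the two-trace decomposition $W_-\cup_Z W_+$ itself as the Heegaard splitting (with $Z$ playing the role of $\partial U$), so your ``main obstacle'' is settled by the definition given just before the theorem, not by a separate excision argument. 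Two slips to fix: the handles of $W_-$ are $D^k\times D^{k+1}$, so $W_-\cong (X\times I)\natural(\natural_r S^k\times D^{k+1})$ rather than a sum with $S^{k-1}\times D^{k+2}$; and the homotopy-equivalence check should run through $0\to K_k(N_2)\to K_k(Z)/G\xrightarrow{\ell}G^\ast\to K_{k-1}(N_2)\to 0$ with $\ell([z])(g)=\lambda(z,g)$, where $\ell$ is onto because $G$ is a direct summand of a nonsingular form and $\ker\ell=G^\perp/G=0$ (the quotient you wrote, $K_k(Z)/(G+G^\perp)=K_k(Z)/G\cong G^\ast$, is not zero).
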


\begin{proof} The proof parallels \cite[prop.~12.17]{Ranicki_ags}.
By \cite[thm.~11.51]{Ranicki_ags} we may assume $(K,\lambda,\mu) = H_{(-1)^k}(F)$. Let $r$ be the rank of the free module $F$. Then 
\[
F \cong \Bbb Z[\pi]^r \cong G\, .
\]
We start by performing surgery on the identity map of $X$ to kill $r$ trivially Poincar\'e embedded
framed $(k-1)$-spheres (this is possible since we can embed the disjoint union of the spheres
trivially in $D^{2k} \subset X$). The result is the connected sum
\[
X' = X \sharp  \underbrace{(S^k \times S^k) \sharp \cdots \sharp  (S^k \times S^k)}_{r\text{ times}} \, .
\]
The trace of the surgery is given by a normal map
\[
(f';f'_0,f'_1)\: (M,M_0,M_1) \to (X\times I, X\times \{0\},X\times \{1\})
\]
in which 
\begin{enumerate}[(i).]
\item $M = (X \times I) \cup_{\partial_0 P\times \{1\}}  P$,
where $(P,\partial_0 P)$ is as in \eqref{eqn:P},
\item The restriction $f'_{|X\times I}$ is the identity and
is extended to $P$ using a null homotopy of $\partial_0 P \to D^{2k}$,
\item $f'_0\: M_0 \to X$ is the identity, and
\item  up to homotopy, $M_1 \to X$ is identified with the map $X' \to X$ which is the identity
on $X_0$ and collapses the complement of $X_0$ in $X'$ to $D^{2k}$.
\end{enumerate}
Choose the standard hyperbolic identification
\[
(K_k(M_1),\lambda,\mu)\cong H_{(-1)^k}(F)
\]
which carries
\[
\operatorname{im}\bigl(
K_{k+1}(M,M_1)\to K_k(M_1)\bigr)
\]
to the first summand $F$.

The  Lagrangian $G\subset (K_k(M_1),\lambda,\mu) = H_{(-1)^k}(F)$ can be realized
as the normal map
\[
(f'',f''_1,f''_2) \: (M', M_1,M_2) \to (X\times [1,2],X\times \{1\},X\times\{2\})
\]
that arises as the trace of a Poincar\'e surgery
of a Poincar\'e embedding  of the disjoint union of  $r$ copies of $S^k \times D^k$,
i.e., 
\[
M' = (M_1 \times [1,2]) \cup_{\partial_+ Q\times \{2\}} Q\, ,
\]
where 
\[
(Q,\partial_+ Q)  =  \mathop{\mathsmaller{\coprod}}_{i=1}^r (D^{k+1} \times D^k, S^k \times D^{k})
\]
and
\[
G = \text{image}(K_{k+1}(M',M_1) \to K_{k}(M_1)) \subset K_{k}(M_1)\, .
\]
The Poincar\'e embeddings are supplied by
Proposition \ref{prop:disjoint-embedding} as applied to
a basis of the free Lagrangian $G$;  its basis elements
have vanishing quadratic and pairwise intersection
invariants.

The kernel exact sequences of the surgery trace,
together with Poincar\'e duality, yield an exact sequence
\[
0\longrightarrow K_k(M_2)
\longrightarrow K_k(M_1)/G
\xrightarrow{\ell}G^\ast
\longrightarrow K_{k-1}(M_2)
\longrightarrow0,
\]
where $\ell([z])(g)=\lambda(z,g)$. Since the kernel
form is nonsingular and $G$ is a direct summand,
$\ell$ is surjective. We infer that $K_{k-1}(M_2)=0$ and
\[
K_k(M_2)\cong\ker(\ell)=G^\perp/G=0,
\]
with the last equality holding because $G$ is a Lagrangian.
Moreover, connectivity is retained and all remaining
kernel modules vanish by Poincar\'e duality.
Consequently, $f''_2\:M_2\to X$ is a homotopy equivalence.

Then up to identifying $[0,2]$ with $I$,  the desired normal cobordism is
\[
(f,f'_0,f_2'') \: (M\cup_{M_1} M'; M_0,M_2) \to (X\times [0,2]; X\times \{0\}, X\times \{2\})\, ,
\]
where $f = f' \cup f''$. We leave the  verification that $f$ is $k$-connected to the reader.

The two boundary images are therefore the prescribed
$F$ and $G$. Consequently, the relative kernel formation is
$(H_{(-1)^k}(F);F,G)\cong A$, as required.
\end{proof}

\begin{cor} Let $k \ge 4$. Then given $\sigma\in L_{2k+1}(\pi,w)$,  there is Poincar\'e normal cobordism
$(f,f_0,f_1)$ with $f_0$  the identity, $f_1$ a homotopy equivalence, and relative surgery obstruction  $\sigma$.
\end{cor}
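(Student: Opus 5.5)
The corollary follows by combining the definition of $L_{2k+1}(\pi,w)$ with Theorem \ref{thm:Wall_realization}. Fix $\sigma\in L_{2k+1}(\pi,w)$. By definition $\sigma$ is the cobordism class of some non-singular $(-1)^k$-quadratic formation $A=(K,\lambda,\mu;F,G)$ over $\Lambda=\Bbb Z[\pi]$, with $F$ and $G$ free. Using \cite[prop.~12.3(ii)]{Ranicki_ags}, or equivalently \cite[thm.~11.51]{Ranicki_ags} as in the first line of the proof of Theorem \ref{thm:Wall_realization}, I may replace $A$ by an isomorphic formation $(H_{(-1)^k}(F);F,\alpha(F))$. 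Isomorphic formations are cobordant, so this does not change $\sigma$.

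Since $k\ge 4$, Theorem \ref{thm:Wall_realization} applies to $A$. It produces a $k$-connected Poincar\'e normal cobordism
\[
(f;f_0,f_1)\: (W;X,\partial_1 W)\to (X\times I;X\times\{0\},X\times\{1\})
\]
with $f_0=\id$ and $f_1$ a homotopy equivalence. Moreover, for a suitable Heegaard splitting, its kernel formation is isomorphic to $A$. In the construction this is the relative kernel formation $(K_k(Z),\lambda_Z,\mu_Z;F_-,F_+)$ attached to the decomposition $W=W_-\cup_Z W_+$.

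By definition, the relative surgery obstruction of $(f;f_0,f_1)$, taken relative to the boundary homotopy equivalences, is the cobordism class of this relative kernel formation. That class is the class of $A$, namely $\sigma$.

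The main point needing care is that the relative surgery obstruction is well defined: its class must not depend on the Heegaard splitting or on the decomposition $W_-\cup_Z W_+$. Otherwise the phrase ``with relative surgery obstruction $\sigma$'' would be ambiguous. I would settle this by appealing to the same independence result used for the absolute obstruction, Proposition \ref{prop:Heegaard-unique}, in its relative form. Its argument works relative to a boundary on which the map is already a homotopy equivalence, because the kernels of $f_0$ and $f_1$ vanish and so contribute nothing. Granting that, the corollary follows directly from Theorem \ref{thm:Wall_realization}.
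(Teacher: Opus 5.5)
Your proposal is correct and is essentially the paper's argument. The paper gives no separate proof: the corollary is an immediate consequence of Theorem \ref{thm:Wall_realization}, applied to any formation representing $\sigma$, with the relative obstruction read off as the class of $(K_k(Z),\lambda_Z,\mu_Z;F_-,F_+)$. Two of your steps are extra. Replacing $A$ by $(H_{(-1)^k}(F);F,\alpha(F))$ is redundant, since the proof of the theorem already does this. Your well-definedness discussion also goes beyond the paper, which simply takes that class as the definition of the relative surgery obstruction.
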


\begin{proof}[Proof of Theorem \ref{bigthm:sigma-onto}]
As pointed out in the introduction, the case $d = 2k$ was already proved in \cite{klein_surgery1}. 
Let $d=2k+1$. The proof is in essence the same as that of Theorem B of \cite{klein_surgery1},
the only difference is that here we use Theorem \ref{thm:Wall_realization}  in place of  \cite[thm.~11.1]{klein_surgery1}.
For the benefit of the reader, we reproduce the argument below.

Let $j\: Y\to X$ be a 2-skeleton, i.e., a $2$-connected map from a finite complex of dimension $\le 2$.
Then by \cite[thm.~A]{Klein_haef}, there is a codimension  $d-2$ Poincar\'e embedding of $j$, i.e., there is a
homotopy pushout square
\[
\xymatrix{
\partial \bar Y \ar[r] \ar[d] & X_0\ar[d] \\
\bar Y \ar[r] & X
}
\]
and a homotopy equivalence $Y @> \simeq >> \bar Y$ such that the composition $Y \to \bar Y\to X$ is $j$.
Here, $(\bar Y,\partial \bar Y)$ and $(X_0,\partial \bar Y)$ are Poincar\'e pairs of dimension $d$ whose fundamental classes
 are compatible with the fundamental class of $X$. Furthermore, the inclusion $\partial \bar Y\to \bar Y$ is $(d-3)$-connected.
As a consequence,  $X$ is identified with the double mapping cylinder
\[
\bar Y\cup_{\partial \bar Y \times \{0\}} (\partial \bar Y \times I)\cup_{\partial \bar Y\times \{1\}} X_0\, .
\]
 By Theorem \ref{thm:Wall_realization},
if $\tau \in L_d(\pi,w) $ is an element, there is a Poincar\'e pair
$(W,\partial W)$  of dimension $d$ with $\partial W = \partial_0 W\amalg \partial_1 W$,
and a normal map 
\[
F\: (W,\partial_0 W,\partial_1 W)  \to (\partial \bar Y \times [0,1],\partial \bar Y\times \{0\},\partial \bar Y\times \{1\})
\]
such that $\partial_0 W \to \partial \bar Y\times \{0\}$ is the identity,
$\partial_1 W \to \partial \bar Y \times \{1\}
$ is a homotopy equivalence, and the surgery obstruction
of $F$ rel boundary is  $\tau$.
\begin{figure} 
\centering
\begin{tikzpicture}[scale=.75]
    \draw[thick] (0,0) ellipse (3cm and 2cm);
    
    \draw[thick, black] plot[smooth cycle, tension=1,scale=1.2] coordinates {(0,0.5) (1,1) (1.5,0) (1,-1) (0,-0.5) (-1,-1) (-1.5,0) (-1,1)};
       \fill[white!60!magenta!60] plot[smooth cycle, tension=1,scale=1.2] coordinates {(0,0.5) (1,1) (1.5,0) (1,-1) (0,-0.5) (-1,-1) (-1.5,0) (-1,1)};
    \draw[thick, scale=.2] (0,0) ellipse (3cm and 2cm);
    \fill[white, scale=.2](0,0) ellipse (3cm and 2cm);
    \draw[thick,->] (3.5,0) to[bend left=20] (6.5,0);
    \node at (5,.6) {{\tiny $f$}};
     \node at (0,0) {{\tiny $\bar Y$}}; 
      \node at (1,.5) {{\tiny $W$}};
         \node at (2.5,0) {{\tiny $X_0$}};
\end{tikzpicture} \hskip .2cm
\begin{tikzpicture}[scale=.75]
    \draw[thick] (0,0) ellipse (3cm and 2cm);
         \draw[thick, scale =.6] (0,0) ellipse (3cm and 2cm);
           \fill[white!60!magenta!60, scale =.6] (0,0) ellipse (3cm and 2cm);
        \draw[thick, scale =.2] (0,0) ellipse (3cm and 2cm);
         \fill[white,scale =.2](0,0) ellipse (3cm and 2cm);
     \node at (0,0) {{\tiny $\bar Y$}}; 
      \node at (.78,.65) {{\tiny $\partial \bar Y {\times} I$}};
         \node at (2.5,0) {{\tiny $X_0$}};
\end{tikzpicture}
\caption{A depiction of the Poincar\'e normal map  constructed in the proof of Theorem \ref{bigthm:sigma-onto}. 
The white region $\bar Y \amalg X_0$ is mapped using the identity. On the magenta region,  the map
supplied by Theorem \ref{thm:Wall_realization} is used.
}
\label{fig:schematicF}
\end{figure}

Let $f\: M \to X$ be defined as 
\[
 M := \bar Y\cup_{F_{|\partial_0 W}} W \cup_{F_{|\partial_1 W}} X_0  @> \text{id}_{\bar Y} \cup F \cup \text{id}_{X_0} >>
\bar Y\cup_{\partial \bar Y \times \{0\}} (\partial \bar Y \times I)\cup_{\partial \bar Y\times \{1\}} X_0 = X\, .
 \]
Then $f$  is a Poincar\'e normal map with surgery obstruction $\tau$  (cf.~\!fig.~\!\ref{fig:schematicF}).
\end{proof}


\section{\texorpdfstring
{Proof of Theorem \ref{bigthm:fund-thm} when $d=2k$}
{Proof of Theorem A in even dimensions}}
\label{sec:d=2k}

In \S\ref{sec:kernel-form} we constructed the kernel form $(K_k(M),\lambda,\mu)$ 
of a $k$-connected Poincar\'e normal map of dimension $d= 2k$ homotopy theoretically. We now use
it to prove Theorem \ref{bigthm:fund-thm} when $d=2k$. The argument is an adaptation
of the proof of \cite[prop.~11.73]{Ranicki_ags} to the Poincar\'e setting.

\begin{proof}[Proof of Theorem \ref{bigthm:fund-thm} when $d=2k$]
Let $f\: M^{2k} \to X^{2k}$ be a Poincar\'e normal map.
By \cite[thm.~8.9]{klein_surgery1}, there
is a normal cobordism of $f$ to a $k$-connected map with the same surgery obstruction. We may therefore assume that $f$ is $k$-connected.

We may also assume that $M$ and $X$ are equipped with codimension one splittings $(M;M_0,D^{2k})$ and $(X;X_0,D^{2k})$. By Proposition \ref{prop:normal-map-top-cell},
we may further assume that $f$ arises from a map of triads
\[
(f;f_0,1) \: (M;M_0,D^{2k}) \to (X;X_0,D^{2k})
\]
(compare \cite[p.~296 top]{Ranicki_ags}).

Then
\[
\sigma(f) = (K_{k}(M),\lambda,\mu) \in L_{2k}(\pi,w)\, .
\]
If $\sigma(f) = 0$, then there is an isomorphism
\[
(K_{k}(M),\lambda,\mu) \oplus H_{(-1)^k}(\Lambda^r) \cong H_{(-1)^k}(\Lambda^s) 
\]
for some non-negative integers $r,s$. Let $(P,\partial_0 P)$ be as in \eqref{eqn:P}.
Choose a (standard) interior Poincar\'e embedding
\[
\partial_0 P\to D^{2k}\, .
\]
Then the composition
\[
\partial_0 P \to D^{2k} \to M
\]
is a Poincar\'e embedding. 
Surgery on the latter produces a normal cobordism from $f$ to 
a map
\[
g \: M_1  := M \,\, \sharp \,\, (\sharp_r S^k \times S^k) @> f\sharp \text{std} >> X \sharp S^{2k}  = X
\]
with surgery kernel
\[
(K_k(M_1),\lambda_1,\mu_1) \cong H_{(-1)^k}(\Lambda^{s})\, .
\]
Apply   Proposition \ref{prop:disjoint-embedding}  to the basis elements $x_1,x_2,\dots,x_s \in K_k(M_1)$ for the Lagrangian
to represent them by disjoint Poincar\'e embeddings. Then surgery on the $x_i$
yields a homotopy equivalence $h \: M'@> \simeq >> X$.
\end{proof}

\section{\texorpdfstring
{Proof of Theorem \ref{bigthm:fund-thm} when $d=2k+1$}
{Proof of Theorem A in odd dimensions}}
\label{sec:d=2k+1}

Let $f\: M^{2k+1} \to X^{2k+1}$ be a Poincar\'e normal map, $k \ge 4$.

\begin{proof}[Proof of Theorem \ref{bigthm:fund-thm}
when $d=2k+1$]
By \cite[thm.~8.9]{klein_surgery1},
we may assume that $f$ is $k$-connected.
By Proposition \ref{prop:Heegaard}, we can assume $f$ is equipped with a Heegaard splitting
\[
(f;f_0,f_1)\: (M;M_0,U) \to (X; X_0,D^{2k+1})\, .
\]
Let $\cal K_f$ be its kernel formation.

In \S\ref{sec:Heegard-normal} we associated to $(f;f_0,f_1)$ the kernel formation
\[
(K,\lambda,\mu;F,G) := (K_k(\partial U),\lambda,\mu;K_{k+1}(U,\partial U), K_{k+1}(M_0,\partial U))\, .
\]
Since $\sigma(f)=0$, the purely algebraic criterion
\cite[rem.~12.31]{Ranicki_ags} implies that $\cal K_f$
is stably isomorphic to a boundary formation.
Proposition \ref{prop:operations} below shows that $f$
is normally cobordant to a homotopy equivalence.
\end{proof}

\begin{prop}\label{prop:operations}
Let $f\:M^{2k+1}\to X$ be a $k$-connected Poincar\'e
normal map equipped with a Heegaard splitting, where
$k\ge4$. If its kernel formation is stably isomorphic
to a boundary formation, then $f$ is normally cobordant
to a homotopy equivalence.
\end{prop}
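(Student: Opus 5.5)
The plan follows Ranicki's proof of \cite[prop.~12.29]{Ranicki_ags}, i.e.\ Wall's odd-dimensional argument. Each algebraic operation on the kernel formation will be realized by a normal cobordism, using Lemma \ref{lem:presentation-kernel} and Proposition \ref{prop:disjoint-embedding}. The hypothesis supplies trivial formations $T,T'$, boundary formations, and an isomorphism
\[
\cal K_f\oplus T\cong \partial(L_0,\theta)\oplus T'.
\]

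\textbf{Step 1: stabilize.} First remove the stabilizations geometrically. Adjoining a trivial formation $(H_\epsilon(\Lambda^s);\Lambda^s,(\Lambda^s)^*)$ to $\cal K_f$ amounts to enlarging the Heegaard splitting. One adds $s$ extra handles in $U$, attached along trivially embedded $(k-1)$-spheres in $D^{2k}$ and cancelled in $M_0$. This leaves $f$ unchanged up to homotopy. So, after changing the Heegaard splitting, we may assume $\cal K_f$ is isomorphic on the nose to $\partial(L_0,\theta)\oplus T'$. Since a trivial formation is itself a boundary (of the zero form on $\Lambda^{s'}$, i.e.\ $(H(\Lambda^{s'});\Lambda^{s'},(\Lambda^{s'})^*)$ with graph Lagrangian $(\Lambda^{s'})^*$), we may in fact assume $\cal K_f\cong \partial(L_0,\theta)$. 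By Ranicki's lemma \cite[prop.~12.19]{Ranicki_ags}, there is then a Lagrangian $L\subset K$ which is a direct complement of both $F$ and $G$.

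\textbf{Step 2: choose a presentation.} Since $L$ is complementary to $F$, Lemma \ref{lem:presentation-kernel} supplies a handle presentation $(e;f,f')\colon(V;M,M')\to X\times I$ whose formation is $(H_\epsilon(F_V);F_V,G_V)$. This is identified with $\cal K_f$ by an isometry carrying $F_V^*$ to $L$. Under this identification,
\[
\gamma_V=p^L_F|_G\colon K_{k+1}(V)\to K_{k+1}(V,M').
\]
Because $L$ is also a complement of $G$, the projection $p^L_F|_G\colon G\to F$ is an isomorphism. So $\gamma_V$ is an isomorphism.

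\textbf{Step 3: read off a homotopy equivalence.} The kernel exact sequence of $(V,M')$ gives
\[
K_{k+1}(V)\xrightarrow{\gamma_V}K_{k+1}(V,M')\to K_k(M')\to K_k(V)\to K_k(V,M').
\]
Since $e$ is $(k+1)$-connected, $K_k(V)=0$. Together with the surjectivity of $\gamma_V$, this gives $K_k(M')=0$. The map $f'$ is $k$-connected, so $K_i(M')=0$ for $i<k$. Poincar\'e duality (Lemma \ref{lem:ernel-props}(ii)) in dimension $2k+1$ then gives $K^{i}(M')\cong K_{2k+1-i}(M')$. The only possible nonzero kernel above degree $k$ is $K_{k+1}(M')\cong K^k(M')$, which vanishes because $K_k(M')=0$ and $K_{k-1}(M')=0$, using the universal coefficient identification. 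Hence $f'$ is $(k+1)$-connected, and by the Corollary following Lemma \ref{lem:ernel-props}, $f'\colon M'\to X$ is a homotopy equivalence. The presentation $V$ is the required normal cobordism.

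\textbf{Main obstacle.} I expect the delicate point to be Step 1: matching the algebraic stabilizations by trivial formations with geometric changes of Heegaard splitting, while keeping a $k$-connected normal map and free distinguished modules. I would first handle it by the handle-addition and cancellation just described, identifying the kernel modules by excision and using the disjoint-embedding results to realize the standard cancelling pairs. If a stable isomorphism with nontrivial $T$ persists, I would instead absorb $T$ into $L$: form $L\oplus (\Lambda^s)^*$ in the stabilized formation and run Steps 2 and 3 there. This is legitimate because Lemma \ref{lem:presentation-kernel} applies to any Heegaard splitting, including the stabilized one.
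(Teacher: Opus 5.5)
Your Steps 2 and 3 agree with the paper's proof. So does realizing the stabilization geometrically by adjoining unknotted zero generators to the Heegaard splitting.

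The gap is in Step 1, at the claim that a trivial formation is the boundary of the zero form. The graph Lagrangian of $(\Lambda^{s'},0)$ is $\{(x,0)\}=\Lambda^{s'}$, not $(\Lambda^{s'})^\ast$. Hence
\[
\partial(\Lambda^{s'},0)=(H_\epsilon(\Lambda^{s'});\Lambda^{s'},\Lambda^{s'}).
\]
Indeed, by \cite[prop.~12.19(i)]{Ranicki_ags} a graph Lagrangian is always a complement of $K^\ast$, so $K^\ast$ is never one when $K\neq0$. Nor is a trivial formation isomorphic to a boundary in general. For $\pi=1$, a Lagrangian of $H_\epsilon(\mathbb Z)$ complementary to both $\mathbb Z$ and $\mathbb Z^\ast$ would be spanned by $(\pm1,\pm1)$. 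On that vector $\mu$ takes the nonzero value $[\pm1]\in Q_k$, so by \cite[prop.~12.19(ii)]{Ranicki_ags} the formation $T_1$ is not isomorphic to a boundary.

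Consequently, from $\mathcal K_f\cong\partial(L_0,\theta)\oplus T'$ you cannot produce a Lagrangian complementary to both $F$ and $G$, and Step 2 cannot be carried out. Your fallback does not repair this. The Lagrangian $L\oplus(\Lambda^s)^\ast$ contains the $G$-summand $(\Lambda^s)^\ast$ of $T'$, so it is not complementary to $G$. Then $p^L_F|_G$ vanishes on that summand, and $\gamma_V$ is not an isomorphism.

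The missing idea is that $T_s$ need not be a boundary, but $T_s\oplus T_s\cong\partial H_{-\epsilon}(\Lambda^s)$. The graph of the nonsingular hyperbolic $(-\epsilon)$-quadratic form on $\Lambda^{2s}$ is complementary to $\Lambda^{2s}$, so this boundary formation is trivial of rank $2s$. You should therefore stabilize the left side once more. Realize another copy of $T'$ geometrically by adjoining further unknotted zero generators to the Heegaard splitting, which gives
\[
\mathcal K_f\oplus T\oplus T'\cong\partial(L_0,\theta)\oplus T'\oplus T'\cong\partial\left((L_0,\theta)\oplus H_{-\epsilon}(\Lambda^{s'})\right).
\]
The right side is a genuine boundary formation. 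With this correction, Steps 2 and 3 go through as you wrote them, and the argument becomes the paper's.
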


\begin{proof}
Put $\epsilon=(-1)^k$ and let
\[
T_r=(H_\epsilon(\Lambda^r);
          \Lambda^r,(\Lambda^r)^\ast)
\]
denote the standard trivial formation. Suppose
\[
\cal K_f\oplus T_r\cong\partial q\oplus T_s.
\]
Since
\[
T_s\oplus T_s\cong
\partial H_{-\epsilon}(\Lambda^s),
\]
we obtain
\[
\cal K_f\oplus T_{r+s}
\cong
\partial\bigl(q\oplus H_{-\epsilon}(\Lambda^s)\bigr).
\]
Adjoining standard unknotted zero generators to the
Heegaard splitting realizes the stabilization on the
left without changing $f$
\cite[\S6, operation~(T1)]{Wall}.
We may therefore assume that $\cal K_f$ is isomorphic
to a boundary formation.

Choose a quadratic Lagrangian $L$ complementary to
both distinguished Lagrangians $F$ and $G$
\cite[prop.~12.19(ii)]{Ranicki_ags}.
Apply Lemma \ref{lem:presentation-kernel} with this $L$.
The resulting presentation has
\[
\gamma_V=p_F^L|_G,
\]
which is an isomorphism because $G$ is complementary
to $L$. The kernel exact sequence
\[
0\longrightarrow K_{k+1}(M')
\longrightarrow K_{k+1}(V)
\xrightarrow{\gamma_V} K_{k+1}(V,M')
\longrightarrow K_k(M')
\longrightarrow0
\]
gives
\[
K_{k+1}(M')=K_k(M')=0.
\]
All remaining kernel modules vanish by connectivity
and Poincar\'e duality. Thus $f'\:M'\to X$ is a
homotopy equivalence.
\end{proof}


\section{Proof of Theorem \ref{bigthm:stronger-exactness}} \label{sec:transversality}

The proof of Theorem \ref{bigthm:stronger-exactness} relies crucially on the results of \cite{klein_surgery1}.

Let $X$ be a connected finite type space equipped with stable spherical fibration $\xi$. Let $\pi = \pi_1(X)$ and $w = w_1(\xi)$.
In \cite[\S3.4]{klein_surgery1}, $\cal F_d(X,\xi)$ was defined as a cobordism group
whose generators consist of $(P^d,\phi,\alpha)$, where
\begin{enumerate}[(i).]
\item $P^d$ is a Poincar\'e duality space
of dimension $d$,
\item $\phi\: P^d \to X$ is a map, and
\item $\alpha$ is the homotopy class of a commutative diagram
\[
\xymatrix{
S^d \ar[r]\ar[d] & P^{\phi^\ast \xi} \ar[d] \\
D^{d+1} \ar[r] &X^\xi
} 
\]
such that the top horizontal map provides a normal invariant for $(P,\phi^\ast \xi)$.
\end{enumerate}

\begin{rem} \label{rem:trivial} When $(X,P)$ is a Poincar\'e pair of dimension $d+1$ with normal invariant
 $\alpha$, the associated generator defines the trivial element
of $\cal F_d(X,\xi)$ (cf. \cite[\S3.4, prop.~3.4]{klein_surgery1}).
\end{rem}

In \cite[\S4]{klein_surgery1} we defined a natural homomorphism
\[
\sigma' \: \cal F_d(X;\xi) \to L_d(\pi,w)\, .
\]

\begin{proof}[Proof of Theorem  \ref{bigthm:stronger-exactness}] Assume $d \ge 8$. Let $Y\to X$ be a finite CW 2-skeleton.
Then one has a commutative diagram
\[
\xymatrix{
\cal F_d(Y,\xi) \ar[r]^{\sigma'} \ar[d]_{\cong} & L_d(\pi,w) \ar[d]^{=}\\
\cal F_d(X,\xi) \ar[r]_{\sigma'}  & L_d(\pi,w)\, .
}
\]
By \cite[cor.~7.3]{klein_surgery1}, the left vertical map is an isomorphism. It therefore suffices to prove that
the upper horizontal map is an isomorphism.

In  \cite[\S12]{klein_surgery1}, we constructed a Poincar\'e $(d+1)$-thickening of $Y$, i.e., a $2$-connected Poincar\'e pair $(\bar Y,\partial \bar Y)$ of dimension $(d+1)$
equipped with a homotopy equivalence $Y \simeq \bar Y$. Moreover, the Spivak fibration of 
$(\bar Y,\partial \bar Y)$ coincides with the pullback of $\xi$. 
We also constructed a function $
t\: \cal N(\partial \bar Y,\xi) \to \cal F_d(Y,\xi) 
$
such that the composition
\[
\cal N(\partial \bar Y,\xi) @> t >> \cal F_d(Y,\xi)  @> \sigma' >> L_d(\pi,w)
\]
is the surgery obstruction $\sigma$. 
By Theorem \ref{bigthm:sigma-onto} it follows that $\sigma'$ is onto. It remains to show that $\sigma'$
is one-to-one.

By \cite[cor.~6.11]{klein_surgery1}, the function $t$ is onto, since $\dim Y \le 2$. Let $x\in \cal F_d(Y,\xi)$ be an element
such that $\sigma'(x) = 0$. Then $x$ lifts to a normal map $f\: M \to \partial \bar Y$ and $\sigma(f) = \sigma'(x) = 0$. By Theorem
\ref{bigthm:fund-thm}, it follows that $f$ is normally cobordant to a homotopy equivalence $f'\: M' \to \partial \bar Y$.
It follows from \cite[cor.~6.11,prop.~6.5]{klein_surgery1}, that $t(f')=x$ is represented by the Poincar\'e pair $(\bar Y,\partial \bar Y)$.
We infer that $x$ is trivial. Consequently, $\sigma'$ is one-to-one.
\end{proof}

\section{Proof of Theorem \ref{bigthm:handlebody}} \label{sec:handlebody}

The scheme of the proof of Theorem \ref{bigthm:handlebody} is by an induction
similar to that of \cite[thm.~6.1]{HV}. There are some differences: 
\begin{enumerate}[(i)]
\item We replace the key technical result \cite[4.19(a)]{HV} with 
Theorem \ref{thm:Smale-Hirsch} and 
Theorem \ref{thm:Whitney};
 \item In the even dimensional case extra care must be taken to obtain Poincar\'e embedded handles in the middle
 dimension. This makes use of \cite[thm.~E]{Klein_compression2}, which we restate as Theorem \ref{thm:extension} below.\footnote{Theorem \ref{thm:extension}  is similar to the statement of \cite[4.19(b)]{HV} but the proofs are very different.}
 \end{enumerate}
 
 \begin{proof}[Proof of Theorem \ref{bigthm:handlebody},
$d=2k+1$ odd]
Here $k\ge4$. It suffices to work with one connected component
of $M$ at a time. We construct Poincar\'e embedded stages
$M_j\to M$ such that $M_j$ has homotopy spine dimension at
most $j$ and $M_j\to M$ is $j$-connected.

The initial stages are obtained by realizing a finite
$2$-skeleton of $M$: use $0$-handles for vertices,
$1$-handles for edges, and $2$-handles for the attaching
relations. 
The  immersion and compression constructions appearing below
realize these attachments and give a codimension zero Poincar\'e embedding
$M_2\to M$  whose underlying map is $2$-connected. Furthermore, the pair $(M_2,\partial M_2)$ is $2$-connected.
From this point on, the relative homotopy groups used
in the induction are modules over $\Lambda = \Bbb Z[\pi]$, with $\pi = \pi_1(M)$.

Suppose $3\le j\le k$ and $M_{j-1}$ has been constructed.
Let $C_{j-1}$ be its Poincar\'e complement, so that
\[
\xymatrix{
\partial M_{j-1} \ar[r]\ar[d] & C_{j-1}\ar[d]\\
M_{j-1}\ar[r] & M
}
\]
is a homotopy pushout. The vertical maps are
$(d-j)$-connected. Excision and the relative Hurewicz theorem,
together with the induction hypothesis, imply that
$\partial M_{j-1}\to C_{j-1}$ is $(j-1)$-connected.
Choose relative disk representatives of a finite generating
set of $\pi_j(C_{j-1},\partial M_{j-1})$, and put
\[
(P_j,\partial_0P_j)=\coprod_{i=1}^r
(D^j\times D^{d-j},S^{j-1}\times D^{d-j}).
\]

The disk representatives determine compatible stable normal
data on their boundary spheres. Theorem
\ref{thm:Smale-Hirsch} supplies a Poincar\'e immersion
$\partial_0P_j\to\partial M_{j-1}$ with these data.
Its spine dimension is $j-1$ and its ambient dimension is
$d-1$, so Theorem \ref{thm:Whitney} compresses it to an
embedding. Using this boundary embedding and the same
normal data, Theorem \ref{thm:Smale-Hirsch} supplies a
relative immersion
\[
(P_j,\partial_0P_j)\longrightarrow
(C_{j-1},\partial M_{j-1}).
\]
Theorem \ref{thm:Whitney} compresses this immersion to a
relative embedding. All its numerical hypotheses hold for
$j\le k$ and $d=2k+1$, as do those of the application to the boundary.
Attach these handles to $M_{j-1}$ to obtain $M_j$.
This yields a Poincar\'e embedding
$M_j\to M$ whose underlying map is
$j$-connected. Moreover, the homotopy spine dimension of $M_j$ is  $\le j$.
This completes the forward construction through $M_k$.

Set $C=C_k$, $A=\partial M_k$. The homotopy pushout diagram of the embedding and the
forward connectivity give isomorphisms on fundamental
groups for $C\to M$ and $A\to C$. Every local system
$\cal B$ on $C$ therefore extends to $M$. Using the same
notation for this extension, duality and excision give
\[
H^q(C;\cal B)
\cong H_{d-q}(C,A;\cal B\otimes L_C)
\cong H_{d-q}(M,M_k;\cal B\otimes L_M)=0
\qquad(q>k).
\]

Apply the forward construction inside $C$ only through
$W_{k-1}$, with homotopy spine dimension at most $k-1$
and $W_{k-1}\to C$ $(k-1)$-connected. Let
\[
D_*=C_*(\tilde C,\tilde W_{k-1})
\]
be a finite free relative cellular model, where tildes
denote universal covers. Then $H_i(D_*)=0$ for $i<k$.
The preceding vanishing and the cohomology sequence give
$H^q(C,W_{k-1};\cal B)=0$ for $q>k$.
By \cite[lem.~10.26(ii)]{Ranicki_ags}, $D_*$ is chain
homotopy equivalent to a finitely generated stably free
module
\[
E=H_k(D_*)\cong\pi_k(C,W_{k-1})
\]
concentrated in degree $k$.

Adjoin trivial $(k-1)$-handles to $W_{k-1}$, with their
new core spheres mapped trivially into $C$. This replaces
$E$ by $E\oplus\Lambda^s$, so we may assume $E$ is free.
These handles are included in the index-$(k-1)$ stage.
Now apply the relative immersion and compression
construction to disk representatives of a basis of $E$.
Homotopy excision supplies such representatives in the
complement of $W_{k-1}$, with boundary in
$\partial W_{k-1}$. The resulting $k$-handles complete
an interior Poincar\'e embedded filtration
\begin{equation}\label{eqn:filtration}
\emptyset=W_{-1}\subset W_0\subset\cdots\subset W_k
\to C.
\end{equation}
The map
\[
H_k(\tilde W_k,\tilde W_{k-1})
\longrightarrow H_k(\tilde C,\tilde W_{k-1})
\]
is an isomorphism. The relative exact sequence shows that
$W_k\to C$ is a homology equivalence on universal covers.
It is also an isomorphism on fundamental groups, and hence
is a homotopy equivalence.

Let $T$ be its Poincar\'e complement and set
$B=\partial W_k$. Thus
\[
C\simeq T\cup_B W_k,\qquad \partial T=A\amalg B.
\]
The map $B\to W_k$ is $k$-connected, and hence so is its
homotopy cobase change $T\to C$. Since $W_k\to C$ is an
equivalence, $B\to T$ induces an isomorphism on fundamental
groups. The same holds for $A\to T$, since $A\to C$
already induces such an isomorphism. Excision makes
$(T,B)$ acyclic with all local coefficients; duality does
the same for $(T,A)$. Thus both boundary inclusions into
$T$ are homotopy equivalences.

Choose a homotopy inverse $r\colon T\to A$ to the
cofibration $A\to T$, with $r|_A=\id_A$. Then
\[
R=\id_{M_k}\cup r\colon
(M_k\cup_A T,B)\longrightarrow(M_k,A)
\]
is a homotopy equivalence of pairs. Turning
\eqref{eqn:filtration} upside down gives
\[
T=V_{-1}\subset V_0\subset\cdots\subset V_k=C,
\]
where $V_j$ is the complement of $W_{k-j-1}$ in $C$.
The transition from $V_{j-1}$ to $V_j$ attaches handles
of index $d-k+j=k+j+1$.

After adjoining $M_k$ along $A$, transport these
attachments successively across $R$, beginning with
the boundary equivalence $r|_B\colon B\to A$.
Denote the resulting stages by $M_{k+j+1}$,
$0\le j\le k$. Homotopy invariance of the attaching
pushouts gives
\[
M_{k+j+1}\simeq M_k\cup_A V_j,
\qquad M_d\simeq M_k\cup_A C\simeq M.
\]
Consequently,
\[
\emptyset=M_{-1}\subset M_0\subset\cdots\subset M_d
\simeq M
\]
is the required handle structure.
\end{proof}

\begin{proof}[Proof of Theorem \ref{bigthm:handlebody},
$d=2k$ even]
Here $k\ge4$. The preceding forward construction applies
through $M_{k-1}$, giving a $(k-1)$-connected map
$M_{k-1}\to M$. At the middle stage, use Theorem
\ref{thm:extension} instead of Theorem \ref{thm:Whitney}
to attach $k$-handles killing a finite generating set of
$\pi_k(M,M_{k-1})$. Homotopy excision gives the required
relative disk representatives in the complement.
The attaching boundary embeddings are chosen as part of
each application of Theorem \ref{thm:extension}, not
fixed in advance.

At each step, we transport the attaching data through
the equivalence of pairs supplied by Theorem
\ref{thm:extension}, and we glue the previously
constructed piece using the induced boundary
equivalence. Homotopy invariance of the homotopy
pushouts preserves the ambient homotopy type and
the preceding filtration.

The initial complement pair is $(k-1)$-connected.
After each attachment, the map of the enlarged handlebody
to $M$ remains $(k-1)$-connected and its homotopy spine
dimension is at most $k$. Excision and relative Hurewicz
therefore show that the new complement pair is again
$(k-1)$-connected. In particular, every application of
Theorem \ref{thm:extension} has a $2$-connected ambient
pair. The resulting Poincar\'e embedding $M_k\to M$
is $k$-connected.

Set $C=C_k$ and $A=\partial M_k$. As above, all local
systems on $C$ extend to $M$, and now duality gives
\[
H^q(C;\cal B)
\cong H_{2k-q}(M,M_k;\cal B\otimes L_M)=0
\qquad(q\ge k).
\]
Thus the reverse construction can stop at index $k-1$.
Construct $W_{k-2}\to C$ first. The finite-chain argument
in the odd-dimensional proof, with last index $k-1$,
shows that
\[
\pi_{k-1}(C,W_{k-2})
\cong H_{k-1}(\tilde C,\tilde W_{k-2})
\]
is finitely generated and stably free, and that the
relative chain complex is concentrated in this degree.
Adjoin trivial $(k-2)$-handles to make this module free,
then attach $(k-1)$-handles representing a basis.
These reverse attachments are below the middle dimension,
so Theorem \ref{thm:Whitney} applies. The resulting
$W_{k-1}\to C$ is a homotopy equivalence.

With $T$ its complement and $B=\partial W_{k-1}$,
the same excision, duality, and fundamental-group
argument shows that $A\to T$ and $B\to T$ are
homotopy equivalences. Reverse the filtration of
$W_{k-1}$ and transport the attaching embeddings across
\[
(M_k\cup_A T,B)\simeq(M_k,A)
\]
as before. The reversed handles have indices
$k+1,\ldots,2k$, so they extend the forward filtration
through $M_k$ to the required handle structure on $M$.
\end{proof}

\subsection{The middle dimension} We turn to a  result that was used in the proof of Theorem \ref{bigthm:handlebody} when $d=2k$ is even. 
Suppose that $(M,\partial M)$ is a $2$-connected Poincar\'e pair of dimension $d= 2k$, where $M$ is connected with fundamental group $\pi$ and $\partial M$ is based and non-empty.
Consider the $k$-handle
\[
(H,\partial_0 H,\partial_1 H) := (D^k\times D^{k},S^{k-1} \times D^{k},D^k \times S^{k-1})\, .
\]
Let 
\[
[\alpha]\in \pi_k(M,\partial M)
\]
be a homotopy class.

\begin{thm}[{\cite[thm.~E]{Klein_compression2}}] \label{thm:extension}  Assume in addition that $k \ge 4$. Then there is a Poincar\'e triad 
\[
(N;\partial_0 N,\partial_1 N)
\]
with $\partial_0 H = \partial_0 N$ and
 a weak homotopy equivalence $(N,\partial N) @> {}_\sim >> (M,\partial M)$.

Furthermore, with respect to the isomorphism $\pi_k(N,\partial N) \cong \pi_k(M,\partial M)$, 
the homotopy class $[\alpha]$ is represented by an embedding
\[
H \to N
\]
relative to $\partial_0 H$.
\end{thm}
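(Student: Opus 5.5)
The approach is to reduce the statement to a relative compression problem for a codimension-zero Poincar\'e immersion of $H$ relative to $\partial_0 H$, and then settle the middle-dimensional case with an unstable (desuspension) argument in the spirit of Theorem \ref{thm:Klein_sphere}. The point is that the attaching region is not fixed in advance; this freedom replaces the metastable inequality that fails when $p = k$ and $d = 2k$.

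\textbf{Step 1: the boundary embedding.} Represent $[\alpha]$ by a map of pairs $(D^k,S^{k-1}) \to (M,\partial M)$. The sphere $S^{k-1}$ has spine dimension $k-1$ in the $(2k-1)$-dimensional Poincar\'e space $\partial M$, and $2k-1 \ge 2(k-1)+1$. Since $D^k$ is contractible, the normal data on $\partial_0 H$ are trivial and extend over the disk. Theorem \ref{thm:Smale-Hirsch} then gives a Poincar\'e immersion $\partial_0 H \to \partial M$, and Theorem \ref{thm:Whitney} compresses it to an embedding. Using this embedding, we replace $(M,\partial M)$ by a weakly equivalent triad $(N;\partial_0 N,\partial_1 N)$ with $\partial_0 N = \partial_0 H$. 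This step fixes $N$ up to the choice of boundary embedding, which may be altered later.

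\textbf{Step 2: relative immersion.} Theorem \ref{thm:Smale-Hirsch} again yields a relative Poincar\'e immersion $(H,\partial_0 H) \to (N,\partial_0 N)$ extending the identity on $\partial_0 H$. By the relative form of \cite[thm.~E]{Klein_compression}, this immersion compresses exactly when its equivariant stable collapse
\[
\tilde\varphi^! \in \{\tilde N/\partial_0\tilde N,\ \tilde H/\partial\tilde H\}_\pi
\]
desuspends. The target is $\Sigma^k(\pi_+)$, which is $(k-1)$-connected, and the source has dimension at most $2k \le 3(k-1)+1$ because $k\ge 4$. Theorem \ref{thm:Hopf-properties}(5) therefore says the complete obstruction is the equivariant Hopf invariant $H(\tilde\varphi^!)$. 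A computation like Lemma \ref{lem:Klein_sphere} identifies the group it lies in with a quotient of $\Lambda$ of the form $Q_k$. This is the self-intersection $\mu$ of the core disk.

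\textbf{Step 3: killing the obstruction (the main difficulty).} In the absolute case $\mu$ is a genuine invariant. Here, however, the core has boundary on $\partial M$, and changing the boundary embedding of Step 1 changes $\mu$. The plan is to modify the boundary embedding $\partial_0 H \to \partial M$ by a local change supported in a disk of $\partial M$: a boundary connected sum with a self-linking $(k-1)$-sphere, or, in Wall's language, a finger move of the core across $\partial M$ along a loop $g\in\pi$. By the Cartan and composition formulas of Theorem \ref{thm:Hopf-properties}, such a move should add an arbitrary element $[g] \in Q_k$ to $\mu$, and then $\mu$ can be made zero.

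Concretely, I would first compute the effect of the move on collars: gluing a collar $\partial M\times I$ modified by a Poincar\'e embedded $S^{k-1}\times D^k$-trace changes the collapse by a class whose Hopf invariant is the prescribed $[g]$. The delicate point is checking that this realization is surjective onto the obstruction group. This should use the $2$-connectivity of $(M,\partial M)$, which ensures $\pi_1(\partial M)\cong\pi$ so that every $g$ is represented by a loop in the boundary. Once $\mu=0$, the desuspension gives an unstable collapse, and \cite[thm.~E]{Klein_compression} produces the embedding $H \to N$ relative to $\partial_0 H$. The weak equivalence $(N,\partial N)\simeq(M,\partial M)$ is retained throughout because every change is an equivalence of pairs transported along the collar.
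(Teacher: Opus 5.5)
\textbf{There is a genuine gap at Step~3, which is where the theorem has its content.} The paper does not prove this statement itself: it imports it from \cite[thm.~E]{Klein_compression2}. Its only comments are that the argument there is homotopy theoretic, and that the theorem does not claim that every Poincar\'e immersion representing $[\alpha]$ compresses. So your proposal must stand on its own.

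Steps~1 and~2 only set up a relative obstruction in $Q_k$. The theorem is the assertion that this obstruction can always be killed by re-choosing the attaching embedding $\partial_0 H\to\partial M$. Step~3 asserts this (``should add an arbitrary element'') without giving an argument, and the mechanisms you name do not work as described:
\begin{enumerate}[(a)]
\item A modification supported in a disk of $\partial M$ only creates double points with trivial group element. At best it alters the coefficient of $1\in\pi$; it can never produce $[g]$ for $g\neq 1$.
\item If, as your collar description suggests, the track joining the old and new boundary embeddings is itself Poincar\'e embedded, then it is a concordance and changes nothing.
\item The finger move along $g$ is the right smooth picture, but it has no meaning in the Poincar\'e category until it is translated into homotopy theory.
\item The Cartan and composition formulas of Theorem~\ref{thm:Hopf-properties} do not say how the disk's obstruction responds when its boundary embedding is changed.
\end{enumerate}

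\textbf{What would close the gap.} Two ingredients are needed.

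First, describe the indeterminacy of the boundary embedding. Fix its homotopy class and its stable normal data. By the EHP sequence in the metastable range, its unstable collapse is then determined by the stable collapse only up to the image of
\[
\{\Sigma\tilde{\partial M}_+,\,D_2(\Sigma^k S^{k-1}_+\wedge\pi_+)\}_\pi\cong H^{2k-1}(\partial M;\mathcal B),\qquad \mathcal B=\Bbb Z[\pi\times\pi]_{\Bbb Z_2}.
\]
You must also show that each such modified collapse is realized by a Poincar\'e embedding in $\partial M$.

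Second, prove a coboundary formula. Altering the boundary embedding by $x$ should change the relative Hopf invariant of the disk by $\delta x$, where
\[
\delta\colon H^{2k-1}(\partial M;\mathcal B)\to H^{2k}(M,\partial M;\mathcal B)\cong Q_k .
\]
Under duality, $\delta$ is the map $H_0(\partial M)\to H_0(M)$ with twisted coefficients. It is onto because $\pi_1(\partial M)\to\pi$ is onto. This is where the $2$-connectivity of $(M,\partial M)$ is actually used.

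\textbf{Two smaller repairs.}
\begin{itemize}
\item The relative compression criterion you invoke is not \cite[thm.~E]{Klein_compression}, which is stated for interior embeddings. It must be supplied.
\item Your displayed stable group is wrong, since $\tilde H/\partial\tilde H\simeq\Sigma^{2k}\pi_+$, not $\Sigma^k\pi_+$. The correct collapse is $\tilde N/\partial_1\tilde N\to\tilde H/\partial_1\tilde H\simeq\Sigma^k\pi_+$, prescribed on $\partial_0$. Its relative Hopf invariant lies in $\{\tilde N/\partial\tilde N,D_2(\Sigma^k\pi_+)\}_\pi\cong Q_k$. With your literal source $\tilde N/\partial_0\tilde N$, the group would be $H^{2k}(N,\partial_0N;\mathcal B)\cong H_0(N,\partial_1N;\mathcal B\otimes L)=0$. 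That would make Step~3 vacuous and contradict your own claim that $\mu$ is a genuine obstruction when the boundary is held fixed.
\end{itemize}
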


\begin{rem} Theorem \ref{thm:extension} does not assert that every Poincar\'e immersion representing the homotopy class  $[\alpha]$ compresses to a Poincar\'e embedding.
The proof of \cite[thm.~E]{Klein_compression2} is homotopy theoretic.\end{rem}

\appendix

\section{Top cell splitting of a Poincar\'e normal map} \label{sec:top-cell}
Let $f\: M^n \to X$ be a Poincar\'e normal map, where $X$ and $M$ are connected and $n \ge 3$.  
Set $\pi = \pi_1(X)$ and let $w$ be the orientation character of $X$. We may assume that $(M;P,D^n)$ and $(X;Q,D^n)$
are codimension one splittings defining top cell decompositions. 

\begin{prop} \label{prop:normal-map-top-cell} There is a homotopy of $f$ to a degree-one map of triads
\[
(g;g_0,g_1) \: (M;P,D^n) \to (X;Q,D^n)\, .
\] 
\end{prop}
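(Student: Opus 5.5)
The plan is to compress $f$ so that it carries the top cell of $M$ homeomorphically onto the top cell of $X$ and the complement $P$ into $Q$. I would first make $f$ cellular with respect to the decompositions $M\simeq P\cup D^n$ and $X\simeq Q\cup D^n$, where $P$ and $Q$ are homotopically $(n-1)$-dimensional (Example on top cell decompositions; for $n=3$ I would use Wall's version). By cellular approximation, $f|_P\colon P\to X$ is homotopic to a map into $Q$. Indeed, $X$ is obtained from $Q$ by attaching a single $n$-cell, so the pair $(X,Q)$ is $(n-1)$-connected. Since $P$ has dimension $\le n-1$, the compression lemma deforms $f|_P$ into $Q$. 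By the homotopy extension property, this deformation extends to a homotopy of $f$.

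After this homotopy, $f$ is a map of pairs $(M,P)\to (X,Q)$, and hence induces a map of quotients $M/P\simeq S^n\to X/Q\simeq S^n$. This self-map of $S^n$ has a degree, and I claim the degree is $1$. The argument uses the diagram
\[
H_n(M;L)\to H_n(M,P;L)\cong \Bbb Z,
\]
which sends the fundamental class $[M]$ to a generator; this is part of what it means for $D^n$ to split off as the top cell. The same holds for $X$. Naturality, together with $f_\ast[M]=[X]$ and the identification $f^\ast L_X = L_M$, then forces the induced map on $H_n$ of the quotients to be $\pm 1$ compatibly with the orientations, hence exactly $1$. Since degree classifies maps $S^n\to S^n$ up to homotopy, the map $M/P\to X/Q$ is homotopic to the identity.

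The remaining and main step is to lift this homotopy back to a homotopy of $f$ rel $P$, so that $D^n$ maps by the identity onto $D^n$ while the boundary sphere $S^{n-1}$ goes to $\partial D^n\subset Q$. I would proceed as follows. First, restrict $f$ to the top cell; this gives a map of pairs $(D^n,S^{n-1})\to (X,Q)$. Because $\pi_n(X,Q)\to\pi_n(X/Q)$ need not be injective when $\pi$ is nontrivial, I would work instead on universal covers. There, $\pi_n(\tilde X,\tilde Q)\cong H_n(\tilde X,\tilde Q)\cong \Lambda$ by relative Hurewicz, valid since the pair is $(n-1)$-connected and $n\ge3$. The class of $f|_{D^n}$ is then an element $\lambda\in\Lambda$. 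The degree-one condition, computed equivariantly via the cap product with $[M]$ and the fact that $H_n(\tilde M,\tilde P)\cong\Lambda$ is generated by the lifted cell, should force $\lambda$ to be a unit of the form $\pm g$ with $g\in\pi$. The sign and orientation constraint pins it to $w(g)g$. Choosing a different lift of the cell, that is, a different reference path for the basepoint, normalizes $\lambda=1$.

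Once $\lambda=1$, the map $(D^n,S^{n-1})\to(X,Q)$ is homotopic as a map of pairs to the characteristic map of the top cell of $X$. This homotopy restricts on $S^{n-1}$ to a homotopy within $Q$. I would extend that boundary homotopy over $P$ using the homotopy extension property for the cofibration $S^{n-1}\to P$, and glue. The result is the desired map of triads $(g;g_0,g_1)$ with $g_1$ the identity of $D^n$. Since $g$ is homotopic to $f$, it still has degree one, and the normal data transport along the homotopy.

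I expect the main obstacle to be this identification of $\lambda$ as a group-element unit. One has to check carefully that the equivariant fundamental class forces the coefficient to be exactly one group element and not some other element of $\Lambda$ that merely augments to $1$. I would attack this by computing the image of $[\tilde M]$ in $H_n(\tilde X,\tilde Q)$ in two ways: through $f$, and through the cell chain complex.
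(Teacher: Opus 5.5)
The step you flagged as the main obstacle is where the argument breaks. After an arbitrary compression of $f|_P$ into $Q$, the degree-one hypothesis does \emph{not} force the top-cell class $\lambda\in\pi_n(X,Q)\cong\Lambda$ to be of the form $\pm g$.

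Cellularly, $C_n(\tilde M)\cong\Lambda$ is generated by a lift of the top cell, and $[M]$ is represented by $1\otimes e^n$ in $\mathbb{Z}^w\otimes_\Lambda C_n(\tilde M)$. Hence $f_\ast[M]=\epsilon_w(\lambda)[X]$, where $\epsilon_w(\sum n_g g)=\sum n_g w(g)$. There is no further equivariant information to extract (for infinite $\pi$, $H_n(\tilde M)=0$), so $\epsilon_w(\lambda)=1$ is all you get.

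Here is a counterexample to the unit claim. Let $M=X=S^1\times S^{n-1}$ with $n\ge 3$, $P=Q=S^1\vee S^{n-1}$, and $\Lambda=\mathbb{Z}[t^{\pm1}]$. Then $\pi_{n-1}(Q)\cong\Lambda e$, the top cell is attached along $(t-1)e$, and $\partial\colon\pi_n(X,Q)\cong\Lambda\to\pi_{n-1}(Q)$ is $\lambda\mapsto\lambda(t-1)e$. Let $f|_Q$ be the identity on $S^1$ and represent $(2-t)e$ on $S^{n-1}$. Since $(f|_Q)_\ast((t-1)e)=(t-1)(2-t)e=\partial(2-t)$, this extends over the top cell with class $\lambda=2-t$. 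The resulting self-map has degree one (it is even a homotopy equivalence) and already carries $P$ into $Q$, so it is a legitimate output of your first step. But $2-t$ is not a unit of $\Lambda$, and changing the reference path or the lift only multiplies $\lambda$ by $\pm g$, so $\lambda=1$ is never reached.

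A related error: your claim that $M/P\to X/Q$ has degree $1$ fails when $w\neq 1$. That degree is the untwisted augmentation $\sum n_g$ of $\lambda$, which $\epsilon_w(\lambda)$ does not determine, because $L$ does not descend to the quotients.

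What is missing is control over the choice of compression. Two compressions of $f|_P$ (within the homotopy class of $f$) differ by a class in $H^{n-1}(P;\Lambda)$, and $\lambda$ changes by its restriction to $H^{n-1}(\partial P;\Lambda)\cong\Lambda$. By the exact sequence of $(P,\partial P)$, using $H^n(P;\Lambda)=0$ and $H^n(P,\partial P;\Lambda)\cong\mathbb{Z}$ via $\epsilon_w$, the image of that restriction is exactly $\ker\epsilon_w$. So your route can be repaired, but only by adding this obstruction-theoretic step, which your proposal lacks.

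The paper sidesteps this by reversing your order. It first homotopes $f$ so that $D^n_M$ maps onto $D^n_X$ by an orientation-preserving identification; this is immediate since $D^n_M$ is contractible, followed by homotopy extension. Only then does it compress $h\colon(P,\partial P)\to(X,Q)$ into $Q$ \emph{rel} $\partial P$. The unique obstruction lies in $H^n(P,\partial P;h^\ast\mathcal A)\cong(\Lambda\otimes\mathbb{Z}^w)_\pi\cong\mathbb{Z}$ and is detected by $\epsilon_w$. The degree-one hypothesis, via the Mayer--Vietoris splitting $r_M([M])=[P]+\delta_M$, gives $h_\ast[P]=0$, so the obstruction vanishes. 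In that order the only invariant is an integer, which is precisely what the degree controls.
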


\begin{rem} When $f$ is a normal map with smooth source, the result may be proved using transversality by means of the Hopf cancellation
trick (cf.~\cite{489175}). 
\end{rem}

\begin{proof}[Proof of Proposition  \ref{prop:normal-map-top-cell}] Let $D^n_M$ and $D^n_X$ denote the top cells of $M$ and $X$ respectively; equip them with the induced orientations.
Let $d\: D^n_M \to D^n_X$ be an orientation-preserving identification. As $D^n_M$ is contractible,
the restriction $f_{|D^n_M}$  is homotopic to the composition
\[
D^n_M @> d >> D^n_X @> \subset >> X\, .
\]
By the homotopy extension property we may arrange it so that  $f_{|D^n_M}$ is this composite.
We transport the stable normal data and the induced
identification of orientation local systems along
this homotopy.

Then $f(P,\partial P) \subset (X,Q)$. Set
\[
h := (f_{|P},f_{|\partial P}) \: (P,\partial P) \to (X,Q)\, .
\]
We shall find a homotopy of
$h$ relative to $\partial P$ to a map having image in $Q$; the resulting map factorizes as
\[
(P,\partial P) \to (Q,\partial Q) @> \subset >> (X,\partial Q)\, .
\]
By Poincar\'e duality and the fact that $Q\to X$ is $(n-1)$-connected, the
only possible non-trivial obstruction $o(h)$ to finding such a factorization lies in the twisted cohomology group
\[
H^n(P,\partial P; h^\ast \cal A)   \, ,
\]
where $\cal A$ is the local system on $X$ associated with the $\Bbb Z[\pi]$-module $\pi_n(X,Q) \cong \Bbb Z[\pi]$
(the isomorphism uses the fact that $X$ is obtained from $Q$ by attaching an $n$-cell). 
It suffices to show that $o(h)$ is trivial.

Note that the map
of locally constant sheaves
\[
w_\ast \: \cal A \to L_X\, ,
\]
induced by $w\: \pi\to \{\pm 1\}$, is a surjection.  

Let
\[
u\in H^n(X,Q; \cal A)
\]
denote the universal primary obstruction, relative to $Q$, to deforming the identity map of $X$ into $Q$. Then
\[
o(h) = h^\ast(u)\, .
\]

After applying the identification $L_P \cong h^\ast L_X$, 
I claim that the induced homomorphism
\[
\epsilon_w  \: H^n(P,\partial P; h^\ast \cal A) @>>> H^n(P,\partial P; L_P)
\]
is an isomorphism. To see this, we use Poincar\'e duality to identify
\begin{align*}
H^n(P,\partial P; h^\ast \cal A)  &\cong H_0(P; h^\ast \cal A \otimes L_P)\, ,  \\
H^n(P,\partial P; L_P)  &\cong H_0(P; L_P \otimes L_P) = \Bbb Z\, .
\end{align*}
Since $f$ is of degree one, it induces a surjection on fundamental groups. As $n \ge 3$, the inclusion $P \to M$ 
is an isomorphism on fundamental groups. Consequently $h_\ast\: \pi_1(P) \to \pi$ is surjective.

The group $H_0(P; h^\ast \cal A \otimes L_P)$ is therefore identified with the group of coinvariants
 $(\Bbb Z[\pi] \otimes \Bbb Z^w)_{\pi}$, where $\pi$ acts diagonally, and $\Bbb Z^w$ is $\Bbb Z$ with
 the action of $\pi$ defined by $w$.

With respect to these identifications, we infer that $\epsilon_w$ is identified with 
the evident homomorphism 
\[
(\Bbb Z[\pi] \otimes \Bbb Z^w)_{\pi} \to (\Bbb Z^w \otimes \Bbb Z^w)_{\pi} 
\]
where on the target $\pi$ acts trivially. It is then readily verified that this map is an isomorphism and the claim is established.

Let $[M] \in H_n(M; L_M)$ be the fundamental class. 
By the relative Mayer-Vietoris sequence we obtain a preferred identification
\begin{equation}  \label{eqn:MV}
H_n(M,\partial P) \cong H_n(P,\partial P) \oplus  H_n(D^n,S^{n-1}) \, ,
\end{equation}
where we have suppressed the local coefficients.

Let
\begin{align*}
r_M \: H_n(M; L_M) &\to H_n(M,\partial P; L_M)\, , \\
r_X \: H_n(X; L_X) & \to H_n(X,Q; L_X) 
\end{align*}
be the evident homomorphisms. We also write
\[
\delta_M \in H_n(D^n_M,\partial D^n_M; \chi_M^*L_M) \quad \text{ and } \quad 
\delta_X \in H_n(D^n_X,\partial D^n_X; \chi_X^*L_X) 
\]
for the compatible fundamental classes on the top cells; here $\chi_M$ and $\chi_X$ denote the characteristic maps of the top cells.

Then the  fundamental classes for $(P,\partial P)$ and $(D^n,S^{n-1})$
glue to the fundamental class $[M]$: 
\[
 r_M([M]) = [P] + \delta_M \, ,
\] 
where the above equation is to be interpreted using \eqref{eqn:MV}.
Note that the
fundamental classes have opposite orientations along $\partial P = S^{n-1}$.
Since $f$ has degree one, 
\begin{align*}
r_X([X]) &= r_X f_\ast([M])\, ,\\
 &=  h_\ast([P]) +  (\chi_X)_\ast d_\ast \delta_M \, , \\
&= h_\ast([P]) +  (\chi_X)_\ast \delta_X \, , \\
&= h_\ast([P]) + r_X([X])\, ,
\end{align*}
where every term lies in $H_n(X,Q;L_X)$.
Consequently, $h_\ast([P]) = 0$.

Set
\[
v := (w_\ast)u \in H^n(X,Q;L_X).
\]
Then by naturality,
\[
\epsilon_w(o(h)) = h^\ast v\, .
\]
Consequently,
\[
\big\langle \epsilon_w(o(h)),[P]\big\rangle
 = \big\langle h^\ast v,[P]\big\rangle
 = \big\langle v,h_\ast[P]\big\rangle
 = 0.
\]
We infer that $\epsilon_w(o(h))=0$, since evaluation at $[P]$ is an isomorphism. Since $\epsilon_w$ is an
isomorphism, $o(h)$ is trivial. Consequently the desired homotopy factorization of $h$ exists. Choose such a
factorization $(g_0,d_{|\partial P}) \: (P,\partial P) \to (Q,\partial Q)$.  Set $g_1 = d$ and $g = g_0 \cup g_1$. Then one has a 
degree-one map of triads \[
(g;g_0,g_1)\: (M;P,D^n_M) \to (X;Q,D^n_X)\, . \qedhere
\]
\end{proof}

\section{Existence of Heegaard splittings} \label{sec:Heegaard-appendix}

\begin{proof}[Proof of Proposition \ref{prop:Heegaard}]  
We will explain the construction in the case $r=1$.  The case $r>1$ is completely analogous and we leave the details to the reader. In this case $(P,\partial_0 P,\partial_1 P)$ is the handle 
\[
(H,\partial_0 H,\partial_1 H) = (D^k \times D^{k+1},S^{k-1} \times D^{k+1}, D^k \times S^k)\, .
\]
Set $x = x_1$.
By  Proposition \ref{prop:normal-map-top-cell},  we may assume $g$ splits as a degree one map of triads 
\[
(g,g_0,g_1)\: (M;M_0,D^{2k+1}) \to (X;X_0,D^{2k+1})\, .
\]
Using the top-cell decompositions, choose a representative
of $x$ consisting of a map
\[
(D^k,S^{k-1})\longrightarrow(M_0,D^{2k})
\]
and a specified null-homotopy of its composite as a map
of pairs into $(X_0,\partial X_0)$. We use this same
null-homotopy for the normal data and the deformation below.

By Theorem \ref{thm:Smale-Hirsch}, the chosen representative
of $x$, with its normal data, underlies a Poincar\'e immersion 
$H \to M_0$ relative to the standard  embedding.
By Theorem \ref{thm:Whitney}  the Poincar\'e immersion is concordant to a  Poincar\'e embedding $H  \to M_0$
relative to the standard embedding. Let $W$ denote the complement of the latter Poincar\'e embedding. Then
\begin{equation} \label{eqn:defnW}
H \cup_{\partial_1 H} W \simeq M_0\, .
\end{equation}
If we set  $U= D^{2k+1} \cup_{\partial_0 H} H$, then 
\begin{itemize}
\item  $(U,\partial U) \cong (S^k \times D^{k+1},S^k\times S^k)$, and
\item $U \cup_{\partial U} W \simeq M$, i.e., $U$ Poincar\'e embeds in $M$ with complement $W$.
\end{itemize}
By the homotopy extension property, the chosen
null-homotopy on $H$ extends to a homotopy of pairs
\[
h_t\:(M_0,\partial M_0)\longrightarrow
(X_0,\partial X_0)
\]
with $h_0=g_0$ and $h_1(H)=\{\ast\}\subset\partial X_0$.
Here we first extend the boundary homotopy over
$\partial M_0$, and then extend over $M_0$.

Extend $h_t|_{\partial M_0}$ across the source top-cell
disk, through maps to the target top-cell disk and
starting at $g_1$. By the homotopy extension property,
this gives a homotopy
\[
q_t\:D^{2k+1}\longrightarrow D^{2k+1}
\]
whose boundary restriction agrees with that of $h_t$.
Gluing gives a homotopy
\[
G_t=h_t\cup q_t\:M\longrightarrow X,
\qquad G_0=g.
\]
Set
\[
f=G_1,\qquad f_0=f|_W,\qquad f_1=f|_U.
\]
Since $f(H)=\{\ast\}$, the map $f$ carries $U$ into
the target top-cell disk and $W$ into $X_0$, with
their common boundary carried into $\partial X_0$.
Transporting the normal data along $G_t$, we obtain
a degree-one map of codimension one splittings
\[
(f;f_0,f_1)\:(M;W,U)\longrightarrow
(X;X_0,D^{2k+1}).
\]
The construction uses the specified representative
of $x$ and its null-homotopy, so this is the required
Heegaard splitting.
  \end{proof}
%

%

%

%


\subsection{On uniqueness}  Although the Heegaard splittings of $f$ are by no means unique, they have cobordant
kernel formations and therefore define the same surgery obstruction:

\begin{prop} \label{prop:Heegaard-unique} The kernel formations associated with any pair of Heegaard splittings of $f$ are cobordant. Consequently,
the surgery obstruction of a Poincar\'e normal map $f\: M^{2k+1} \to X^{2k+1}$ is well defined.
\end{prop}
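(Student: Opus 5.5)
We follow Ranicki's proof that the kernel formation is well defined up to cobordism \cite[prop.~12.25 and surrounding results]{Ranicki_ags}. We reduce it to two claims: an elementary change of the Heegaard splitting changes the formation only by a stable isomorphism, and any two splittings are related by a sequence of such changes. The second claim is where the Poincar\'e embedding theory enters. Two Heegaard splittings $(M;M_0,U)$ and $(M;M_0',U')$ of $f$ differ in three ways: the choice of generators $x_1,\dots,x_r$ of $K_k(M)$, the chosen null-homotopies, and the Poincar\'e embedding of $U$.

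\emph{Step 1: changes of generators.} First consider adjoining a zero generator, namely an unknotted extra handle whose core sphere bounds in a small disk. Here $K_{k+1}(U,\partial U)$ and $K_{k+1}(M_0,\partial U)$ each acquire a summand $\Lambda$, and these summands pair hyperbolically. So the formation changes by adding the trivial formation $T_1$ \cite[\S6, operation~(T1)]{Wall}. Next consider replacing $x_i$ by $x_i+a x_j$, or changing the null-homotopy by an element of $\pi_{k+1}(X)$. This is realized by handle slides inside a Poincar\'e embedded smooth handlebody, using Theorems \ref{thm:Smale-Hirsch} and \ref{thm:Whitney} exactly as in the proof of Lemma \ref{lem:presentation-kernel}. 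The effect is an isometry of $H_{(-1)^k}(F)$ preserving $F$, which gives an isomorphism of formations. Two generating sets become equal after stabilizing each by the other's generators and sliding, so they yield stably isomorphic formations.

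\emph{Step 2: fixed generators, different embeddings.} Now suppose two splittings use the same generators and null-homotopies but different Poincar\'e embeddings $U\to M$. Both underlying maps are the same immersion data, by Theorem \ref{thm:Smale-Hirsch}. The spine dimension is $k$ in ambient dimension $2k+1$, which is exactly the borderline of Theorem \ref{thm:Whitney}, so uniqueness of the compression up to concordance is not directly available. The plan is to work in $M\times I$ instead, a Poincar\'e space of dimension $2k+2$. There we build a relative embedding of $U\times I$ extending the two given embeddings on the ends. Theorems \ref{thm:Smale-Hirsch} and \ref{thm:Whitney} applied relative to $\partial_0=U\times\partial I$ require $2k+2\ge 2k+1$, which holds. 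Its complement is a normal cobordism $V$ from $M_0$ to $M_0'$ rel $\partial U$.

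\emph{Step 3: from the cobordism to a cobordism of formations.} The cobordism $V$ is a $(2k+2)$-dimensional normal map of Poincar\'e triads rel $\partial U$. Making it highly connected rel boundary by \cite[thm.~8.9]{klein_surgery1}, its kernel gives a $(-1)^{k+1}$-quadratic form $q$ relating $G=K_{k+1}(M_0,\partial U)$ and $G'=K_{k+1}(M_0',\partial U)$. Ranicki's algebraic gluing argument \cite[prop.~12.21, 12.25]{Ranicki_ags} then gives a stable isomorphism
\[
(H(F);F,G)\oplus\partial q \cong (H(F);F,G')\oplus \partial q'.
\]
This shows the two formations are cobordant, and the consequence for $\sigma(f)$ follows from the definition of $L_{2k+1}$.

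\emph{Main obstacle.} We expect Step 3 to be the hardest. It requires identifying the kernel module data of the Poincar\'e cobordism $V$ with Ranicki's algebraic boundary construction. In particular, the quadratic refinement $\mu$ on $K_{k+1}(V)$ has to come from the equivariant Hopf invariant of Theorem \ref{thm:Klein_sphere}, applied in dimension $2k+2$, which needs $k+1\ge4$. We would try to avoid a full computation by instead doing surgery on $V$, using Proposition \ref{prop:disjoint-embedding}, to reduce to the case where $V$ is an elementary trace of $k$- and $(k+1)$-surgeries. Such traces act on formations by the elementary moves already handled in Step 1 and Lemma \ref{lem:presentation-kernel}.
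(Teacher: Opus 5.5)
\textbf{The gap is in Step~2, and it cannot be patched within the approach.}

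Your range check uses the spine dimension of $U$. For a relative embedding of $U\times I$ in $M\times I$ rel $U\times\partial I$, the relevant number is the relative homotopy dimension of $(U\times I,U\times\partial I)$. Its relative homology is $H_{\ast-1}(U)$, which is nonzero in degree $k+1$, so $p=k+1$. With $d=2k+2$, Theorem~\ref{thm:Whitney} then needs $2k+2\ge 2p+1=2k+3$, which fails. Crossing with $I$ raises both $d$ and $p$ by one, so the borderline case $d=2p+1$ becomes $d=2p$. In effect you are trying to get a uniqueness statement from an existence theorem one dimension up, and uniqueness is exactly one dimension out of range.

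The failure is also not merely technical: the concordance you want does not exist in general. Suppose $U\times I$ Poincar\'e embedded in $M\times I$ rel ends. Since $M_0\times 0\cup U\times I\simeq M\times I$, excision shows that the complement $V$ is an $h$-cobordism rel $\partial U\times I$. Step~3 would then have nothing to do, and one would get $G=G'$ on the nose as submodules of $K_k(\partial U)=F\oplus F^\ast$.

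But consider the framed linking and self-linking changes used in Lemma~\ref{lem:presentation-kernel}, applied to the embedding of $U$ (together with Poincar\'e embedded $1$-handles, as there). These move the cores only by homotopies, so the generators, null-homotopies and stable normal data are unchanged. Yet they replace $G$ by $T_s(G)$ with $s=\psi-\epsilon\psi^\ast$, and this generally differs from $G$; for instance $T_s(F^\ast)\neq F^\ast$ whenever $s\neq 0$. So two splittings with identical data are typically not concordant. Their formations are isomorphic only via isometries fixing $F$, and proving that every difference has this form is a borderline embedding classification. Theorems~\ref{thm:Smale-Hirsch} and~\ref{thm:Whitney} do not supply it.

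The paper avoids comparing splittings geometrically at all. Following the final paragraph of the proof of \cite[prop.~5.4]{Ranicki}, it identifies the cobordism class of the kernel formation with the class of the quadratic Poincar\'e complex given by the algebraic mapping cone of the Umkehr chain map $C(\tilde f^!)\colon C_\ast(\tilde X_+)\to C_\ast(\tilde M_+)$. That class is defined without any Heegaard splitting and is a normal bordism invariant, so independence of the splitting is immediate. To close your argument you would need either this identification, which makes Steps~1--3 unnecessary, or a genuine classification of Poincar\'e embeddings of $U$ with fixed cores up to concordance in the range $d=2p+1$.
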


\begin{proof} This follows from the last part of the proof of \cite[prop.~5.4, final paragraph]{Ranicki}, which 
applies as well to $k$-connected normal maps in the Poincar\'e category.  It asserts that
the cobordism class of the kernel formation associated with a Heegaard splitting
of $f$ coincides with the cobordism class of the quadratic algebraic Poincar\'e complex given by  the algebraic mapping cone of
the (singular) chain map
$C(\tilde f^!)\: C_\ast(\tilde X_+) \to C_\ast(\tilde M_+)$.
The  latter cobordism class
is defined without requiring $f$ to be $k$-connected, is an invariant of normal bordisms, and is defined without resorting to a Heegaard splitting.
\end{proof}


\end{document}